\documentclass[11pt]{article}

\usepackage{amsmath,amssymb,amsfonts,xy,enumitem,amsthm,changepage,lscape,rotating,floatpag}
\xyoption{all}
\usepackage{pstricks}

\floatpagestyle{empty}

\addtolength{\textheight}{1.2in}
\addtolength{\topmargin}{-.6in}
\addtolength{\textwidth}{1.5in}
\addtolength{\oddsidemargin}{-.75in}
\addtolength{\evensidemargin}{-.75in}

\newenvironment{mywidth}{\begin{adjustwidth}{.5cm}{}}{\end{adjustwidth}}
\newenvironment{mywidth2}{\begin{adjustwidth}{.7cm}{}}{\end{adjustwidth}}

\numberwithin{equation}{section}
\newtheorem*{thm*}{Theorem}
\newtheorem{mythm}{Theorem}
\newtheorem{thm}{Theorem}[section]
\newtheorem{prp}[thm]{Proposition}
\newtheorem{lmm}[thm]{Lemma}   
\newtheorem{crl}[thm]{Corollary} 
 
\theoremstyle{definition}

\newtheorem{rmk}[thm]{Remark}

\def\eref#1{(\ref{#1})}
\def\BE#1{\begin{equation}\label{#1}}
\def\EE{\end{equation}}

\def\lr#1{\langle{#1}\rangle}
\def\ti#1{\tilde{#1}}
\def\wt#1{\widetilde{#1}}

\def\sf#1{\textsf{#1}}
\def\ch#1{\check{#1}}

\def\lra{\longrightarrow}
\def\llra{\longleftrightarrow}

\def\cB{\mathcal B}
\def\C{\mathbb C}
\def\cC{\mathcal C}
\def\fc{\mathfrak c}
\def\fd{\mathfrak d}
\def\cD{\mathcal D}
\def\cF{\mathcal F}

\def\cL{\mathcal L}

\def\cI{\mathcal I}
\def\fI{\mathfrak i}
\def\fJ{\mathfrak j}
\def\cK{\mathcal K}
\def\cL{\mathcal L}

\def\cP{\mathcal P}
\def\R{\mathbb R}
\def\cR{\mathcal R}
\def\fs{\mathfrak s}
\def\cT{\mathcal T}
\def\ft{\mathfrak t}

\def\Z{\mathbb Z}

\def\al{\alpha}
\def\be{\beta}

\def\de{\delta}
\def\ep{\epsilon}
\def\io{\iota}
\def\ka{\kappa}
\def\la{\lambda}

\def\si{\sigma}

\def\La{\Lambda}
\def\Om{\Omega}

\def\Th{\Theta}

\def\OmN{\Omega_N}

\def\i{\infty}

\def\w{\wedge}

\def\tnB{\textnormal{B}}
\def\tnC{\textnormal{C}}
\def\tnL{\textnormal{L}}
\def\tnM{\textnormal{M}}
\def\tnR{\textnormal{R}}
\def\tnT{\textnormal{T}}

\def\dom{\textnormal{dom}}
\def\Hom{\textnormal{Hom}}
\def\id{\textnormal{id}}
\def\ind{\textnormal{ind}}
\def\Im{\textnormal{Im}}
\def\Re{\textnormal{Re}}
\def\top{\textnormal{top}}

\begin{document}

\author{Aleksey Zinger\thanks{Partially supported by 
NSF grants DMS-0846978 and DMS-1901979
and IAS Fund for Math}}
\title{The Determinant Line Bundle for Fredholm Operators:
Construction, Properties, and Classification}
\date{\small{{\it Updated} \today}}
\maketitle

\begin{abstract}
\noindent
We provide a thorough construction of a system of compatible determinant line bundles 
over spaces of Fredholm operators, fully verify that this system satisfies 
a number of important properties, and include explicit formulas for all relevant
isomorphisms between these line bundles.
We also completely describe all possible systems of compatible determinant line bundles
and compare the conventions and approaches used elsewhere in the literature.
\end{abstract}

\tableofcontents

\section{Introduction}
\label{intro_sec}

\noindent
A Fredholm operator between Banach vector spaces $X$ and $Y$ is a bounded
homomorphism \hbox{$D\!:X\!\lra\!Y$} such that 
$$\Im\,D\equiv\big\{Dx\!:\,x\!\in\!X\big\}$$ 
is closed in~$Y$ and the dimensions~of its kernel and cokernel,
$$\ka(D)\equiv\big\{x\!\in\!X\!:\,Dx\!=\!0\big\}\qquad\hbox{and}\qquad
\fc(D)\equiv Y/(\Im\,D),$$
are finite.\footnote{The first condition is implied by the other two, but is
traditionally stated explicitly.}
The space $\cF(X,Y)$ of Fredholm operators is an open subspace of 
the space $\cB(X,Y)$ of bounded linear operators $D\!:X\!\lra\!Y$ in the normed topology;
see \cite[Theorem~A.1.5(ii)]{MS}.
Quillen's construction, outlined in \cite[Section~2]{Quillen}, associates to each Fredholm operator~$D$ 
a $\Z_2$-graded one-dimensional vector space $\la(D)=\det\,D$, 
called \textsf{the determinant line of~$D$},
and topologizes, in a systematic way, the set
$${\det}_{X,Y}\equiv \bigsqcup_{D\in\cF(X,Y)}\hspace{-.2in}\la(D)$$
as a line bundle over $\cF(X,Y)$ for each pair $(X,Y)$ of Banach vector spaces.
There are in fact infinitely many compatible systems of such topologies,
all of which we describe in Section~\ref{classify_subs};
they are isomorphic pairwise.
This is contrary to suggestions in many papers that there is a unique way
of topologizing determinant line bundles in a systematic way and can be viewed
as capturing the essence of the {\it unique up to a canonical isomorphism}
statement in \cite[Theorem~1]{KM}.
We describe some intrinsic and not so-intrinsic ways of narrowing down
the choices and  of choosing a specific system 
at the end of Subsection~\ref{detLBprop_subs} and 
at the end of Remark~\ref{Seidel_rmk}.\\

\noindent
The determinant line bundle plays a prominent role in a number of geometric situations, 
but unfortunately there appears to be no thorough description of its 
construction and properties in the literature.
The key issue in its construction is the existence of a collection of 
(set-theoretic) trivializations for ${\det}_{X,Y}$,
such as $\wt\cI_{D,T}$ in~\eref{surjtriv_e} and $\hat\cI_{\Th;D}$ in~\eref{cIThD_e2}, 
that overlap continuously.
The justification for the existence of such a collection in~\cite{Quillen} 
consists of an allusion to some unspecified collection of compatible isomorphisms 
relating the determinant line bundles in the short exact~triples
\BE{QuillenET_e}\begin{split}
\xymatrix{0\ar[r]&0\ar[r]\ar[d]&\R^{k+m}\ar[r]\ar[d]&\R^{k+m}\ar[r]\ar[d]&0\\
0\ar[r]&\R^{c+m}\ar[r]&\R^{c+m}\ar[r]&0\ar[r]&0  }
\end{split}\EE
of homomorphisms, where the middle arrow is the projection onto the last~$m$ coordinates. 
Explicit formulas for such a collection of isomorphisms appear in 
\cite[Section~(f)]{BF}, \cite[Section~3.2.1]{EES}, 
\cite[Section~20.2]{KrMr}, \cite[Appendix~A.2]{MS}, \cite[Section~2]{Salamon}, 
and \cite[Section~(11a)]{Seidel},
while \cite[Appendix~D.2]{Huang} and \cite[Chapter~I]{KM} describe it more abstractly.
The proof of \cite[Theorem~A.2.2]{MS} uses them to describe trivializations for determinant
line bundles for Fredholm operators without checking that they overlap continuously,
which in fact is not the case, as discovered by~\cite{MW}; 
see Section~\ref{othset_subs} for more details.
Key properties of such collections of isomorphisms necessary for the construction 
of the determinant line
bundle are specified in~\cite{BF}, \cite{KrMr}, \cite{Seidel}, and 
the construction itself is then briefly outlined.
The discussion of the relevant linear algebra considerations is more extensive
in~\cite{Huang}, but it contains an important deficiency, which is described
in Remark~\ref{Huang_rmk}, and does not complete the construction.
However, the general approach of \cite[Appendix~B]{Huang} is well-suited for
an explicit construction of the determinant line bundle and the analysis of its properties.
Explicit formulas for the above collection are used directly to topologize determinant 
line bundles over spaces of Fredholm operators  and for Kuranishi structures
in~\cite{Salamon} and~\cite{MW}, respectively.
The latter are closely related to the two-term case of the bounded complexes
of vector bundles for which a determinant line bundle is constructed in~\cite{KM}.
As explained in detail in Section~\ref{KM_subs}, 
using \cite[Theorem~I]{KM}, which predates~\cite{Quillen}, is perhaps 
the most efficient way for constructing the determinant line bundle and 
verifying its properties and would eliminate the need
for most of our Section~\ref{lin_alg}, but at the cost of explicit formulas
for important isomorphisms (which may well be useful in specific applications)
and of being self-contained.
None of the above works explicitly considers most of the non-trivial properties of
the determinant line bundle for Fredholm operators listed in Subsection~\ref{detLBprop_subs}.\\

\noindent
This paper provides a comprehensive construction of a system of determinant line
bundles and a complete verification of many important properties it satisfies.
Section~\ref{summ_sec} sets up the necessary notation and precisely 
describes the properties we later show this system satisfies.
Section~\ref{outline_subs} outlines the determinant line bundle construction 
carried out in this paper and three alternative approaches,
while Section~\ref{KM_subs} provides more details for the approach
based on the results obtained in~\cite{KM}.
Section~\ref{othset_subs} compares several conventions for the determinant line bundle 
that have appeared in the literature.
Section~\ref{classify_subs} establishes Theorem~\ref{classify_thm},
which describes all determinant line bundle systems satisfying the properties in 
Subsection~\ref{detLBprop_subs} and shows that such systems correspond to collections of isomorphisms 
\BE{Aicisom_e}A_{i,c}\!:\La^c(\R^c)\lra\R, \qquad i\!\in\!\Z,~c\!\in\!\Z^+,~c\ge-i\,,\EE
that are orientation-preserving if $i,c\!\in\!2\Z$. 
In contrast to the viewpoint of the previous paragraph,
there are  {\it no} compatibility conditions on the isomorphisms in these collections.
By Theorem~\ref{classify_thm}, the compatible systems of topologies on determinant
line bundles correspond to the compatible systems of isomorphisms 
for the exact triples~\eref{QuillenET_e} and to
the compatible collections 
of isomorphisms for exact triples of Fredholm operators.
Section~\ref{lin_alg}, which is motivated by \cite[Section~1]{KM} and \cite[Appendix~D.2]{Huang},
deals with the relevant linear algebra.
In particular, Subsection~\ref{ET_subs} provides explicit formulas for 
a collection of exact triple isomorphisms~$\Psi_{\ft}$ as in~\eref{ETisom_e}
and dualization isomorphisms~$\wt\cD_D$ as in~\eref{wtcD_e} satisfying
all properties of Subsection~\ref{detLBprop_subs};
see~\eref{cUDDdfn_e} and~\eref{cDdfn_e}, respectively.
Section~\ref{mainthmpf_sec} concludes this paper with topological arguments;
this section is motivated by the approach in \cite[Appendix~A.2]{MS}.
Many of the individual steps that we describe in this paper are not new.
However, even the full statement of Theorem~\ref{main_thm} on page~\pageref{main_thm}
does not seem to  appear elsewhere.\\

\noindent
The author would like to thank M.~Abouzaid, P.~Georgieva, H.~Hofer,
Y.-Z.~Huang, D.~McDuff, D.~Salamon, and K.~Wehrheim for related discussions,
the referee for pointing out additional relevant literature, 
and the IAS School of Mathematics for hospitality.

\subsection{Post-publication updates}
\label{updates_subs}

\noindent
The present version of this paper contains one modification and 
two additions to the mathematical content of the published version, which are described below.
The statements of the theorems, 
\ref{main_thm}~on~page~\pageref{main_thm} and~\ref{classify_thm} on~page~\pageref{classify_thm},
and the discussions of the connections between various properties have been
updated in accordance with these changes.
Subsections~\ref{Reg_subs}, \ref{cont_subs}, and~\ref{DualProp_subs} 
have been changed from the collections of specific isomorphisms~$\Psi_{\ft}$ in~\eref{cUDDdfn_e}
and~$\wt\cD_D$ in~\eref{cDdfn_e} to collections of such isomorphisms satisfying certain properties.
Furthermore, Section~\ref{summ_sec} has been split into two subsections, and
more details have been added to some arguments as well.
The enumeration of theorems, propositions, etc.~has not changed from the published version, 
but many equations numbers in Section~\ref{summ_sec} and 
some in Section~\ref{mainthmpf_sec} have changed.\\

\noindent
{\bf\emph{Naturality II property broadened to quasi-isomorphisms.}}
The Naturality~II property on page~\pageref{NaturII_prop} has been broadened
from isomorphisms of exact triples of Fredholm operators in the published version
to quasi-isomorphisms.
In the present version of the paper, we refer to the previous version of this property
as the {\it isomorphism} Naturality~II property.
Relatedly, the Naturality~III and Normalization~III, III$^{\star}$ properties
in the published version of the paper are now called Normalization~III, IV, IV$^{\star}$,
respectively.\\

\noindent
While no issues have been discovered with any formal statements in the published version,
the informal summary of connections between the various properties was off.
In particular, the isomorphism Naturality~II, Normalization~II,III, and Compositions~I,II 
properties do not imply the Exact Squares property.
For example, the collection of exact triple isomorphisms~$\Psi_{\ft}$ in~\eref{ETisom_e} 
given by~\eref{cUDDdfn_e} satisfies the (full) Naturality~II, Normalization~II,III, 
and Compositions~I,II properties.
For each Fredholm operator~$D$, let  
$$A_D=\begin{cases}-1,&\hbox{if}~\fc(D)\neq0,~\dim\dom(D)=\i;\\
1,&\hbox{otherwise}.\end{cases}$$
For each exact triple~$\ft$ of Fredholm operators as in~\eref{cTdiag_e},
let
$$\Psi_{\ft}'=\begin{cases}
\frac{A_{D'}A_{D''}}{A_D}\Psi_{\ft},&\hbox{if}~
\dim\dom(D'),\dim\dom(D'')=\i;\\
\Psi_{\ft},&\hbox{otherwise}.
\end{cases}$$
The new collection of exact triple isomorphisms~$\Psi_{\ft}'$ satisfies 
the isomorphism Naturality~II, Normalization~II,III, and Compositions~I,II properties.
However, this collection does not satisfy the full Naturality~II property,
for quasi-isomorphisms between exact triples with infinite- and finite-dimensional 
Fredholm operators.
Since the isomorphism Naturality~II, Normalization~II,III, and Exact Squares properties
imply the Naturality~II property by Subsection~\ref{classify_subs},
the above collection~$\{\Psi_{\ft}'\}_{\ft}$ does not satisfy the Exact Squares property either.\\

\noindent
{\bf\emph{Complex orientations.}}
For a $\C$-linear Fredholm operator~$D$ between Banach vector spaces~$X$ and~$Y$
with complex structures, the determinant line~$\la(D)$ has a canonical orientation.
Such orientations are sometimes used to orient the determinant lines for
other operators by transferring them along paths of Fredholm operators.
The Complex Orientations, Complex Exact Triples, and 
Dual Complex Orientations properties on pages~\pageref{COrient_prop}, \pageref{CET_prop}, 
and~\pageref{DualCOrient_prop} have been added to require the topologies on 
the determinant line bundles, isomorphisms for exact triples of Fredholm operators,
and dualization isomorphisms to be compatible with these orientations.
As indicated by the proof of Theorem~\ref{classify_thm} in Subsection~\ref{classify_subs},
these properties do not cut down on the admissible systems of determinant line bundles
significantly.\\

\noindent
{\bf\emph{Wall-crossing for orientations.}}
If $D$ is an isomorphism between  Banach vector spaces~$X$ and~$Y$,
the determinant line~$\la(D)$ again has a canonical orientation. 
By the Normalization~I property on page~\pageref{NormalI_prop},
these orientations vary continuously over the space of isomorphisms.
Subsection~\ref{CrossNums_subs} has been added to give a criterion
determining whether the extension of the canonical orientation for an isomorphism~$D$
over a generic path in~$\cF(X,Y)$ ending an another isomorphism~$D'$
restricts to the canonical orientation of~$\la(D')$.
The answer turns out to be independent of the choice of an admissible system of
determinant line bundles.
Along with the added complex orientation properties,
this implies that the signs defined in certain geometric settings,
such as in Gromov-Witten theory, by transferring orientations from
$\C$-linear operators along paths do not depend on the choice of 
either $\C$-linear operators or 
an admissible system of determinant line bundles.

\section{The determinant line bundle}
\label{summ_sec}

\subsection{Notation and terminology}
\label{detLBnota_subs}

\noindent
All vector spaces we consider are over~$\R$.
We denote by $\fd(V)$ the dimension of a vector space~$V$ and~by
$$\la(V)\equiv\La^{\top}V\equiv \La^{\fd(V)}V \qquad\hbox{and}\qquad 
\la^*(V)\equiv \big(\la(V)\big)^*  $$
the top exterior power of~$V$ and its dual, whenever $\fd(V)\!<\!\i$.
We view $\la(V)$ and $\la^*(V)$ as graded lines of degrees
$$\deg\la(V),\deg\la^*(V)=\fd(V)+2\Z\in\Z_2.$$
For any two $\Z_2$-graded lines $L_1$ and $L_2$, we define
\begin{gather}
\deg L_1\!\otimes\!L_2=\deg L_1+\deg L_2,\notag \\
\label{Risom_e}R\!:L_1\!\otimes\!L_2\lra L_2\!\otimes\!L_1,\qquad 
R(v_1\!\otimes\!v_2)=(-1)^{(\deg L_1)(\deg L_2)}v_2\!\otimes\!v_1.
\end{gather}
If $\cL_1,\cL_2\!\lra\!\cF$ are $\Z_2$-graded line bundles (each fiber has a grading
varying continuously over~$\cF$), the fiberwise isomorphisms~$R$ give rise
to an isomorphism
$$R\!:\cL_1\!\otimes\!\cL_2\lra\cL_2\!\otimes\!\cL_1$$
of $\Z_2$-graded line bundles over $\cF$.
If $L$ is a line and $v\!\in\!L\!-\!0$, we define $v^*\!\in\!L^*$ by $v^*(v)\!=\!1$.\\

\noindent
For a finite-dimensional vector space $V$, we define
\BE{cPisom_e}\cP\!:\la(V^*)\lra\la^*(V), \quad
\big\{\cP(\al_1\!\w\!\ldots\w\!\al_n)\big\}(v_1\!\w\!\ldots\w\!v_n)
=(-1)^{\binom{n}{2}}\det\big(\al_i(v_j))_{i,j=1,\ldots,n}\EE
and denote the inverse of~$\cP$ also by~$\cP$.
The advantages of the isomorphism~\eref{cPisom_e} over the isomorphism induced by
the first pairing in~\eref{pairdfn_e} are that the former 
respects complex orientations and fits better
with short exact sequences; see \eref{Cdualpair_e2} and
the last statement of Lemma~\ref{sesdual_lmm}.\\

\noindent
For a Fredholm operator $D\!:X\!\lra\!Y$, we define
\BE{DLdfn_e} \la(D)= \la(\ka(D))\otimes\la^*(\fc(D)) \EE
with the grading 
$$\deg\la(D)\equiv \ind\,D+2\Z\equiv\fd(\ka(D))-\fd(\fc(D))+2\Z\in\Z_2.$$
This is the same definition as in \cite[Section~20.2]{KrMr} and \cite[Section~7.4]{MW};
we discuss alternative versions of~\eref{DLdfn_e} in 
Subsections~\ref{KM_subs} and~\ref{othset_subs}.\\

\noindent
{\bf\emph{Morphisms.}}
A \textsf{homomorphism between Fredholm operators $D\!:X\!\lra\!Y$ and
$D'\!:X'\!\lra\!Y'$} is a pair of homomorphisms $\phi\!:X\!\lra\!X'$ 
and $\psi\!:Y\!\lra\!Y'$ so that $D'\!\circ\!\phi\!=\!\psi\!\circ\!D$.
A \textsf{quasi-isomorphism between Fredholm operators $D$ and~$D'$} 
is a homomorphism $(\phi,\psi)\!:D\!\lra\!D'$ that induces isomorphisms 
$$\phi_{\ka}\!:\ka(D)\lra\ka(D') \qquad\hbox{and}\qquad
\psi_{\fc}\!:\fc(D)\lra\fc(D').$$
In such a case, we denote by 
\begin{gather}
\la(\phi_{\ka})\!:\la(\ka(D))\lra\la(\ka(D')), \qquad
\la(\psi_{\fc}^{-1})\!:\la(\fc(D'))\lra\la(\fc(D)),\notag\\
\label{cIphipsi_e}
\wt\cI_{\phi,\psi;D}\!:\la(D)\lra \la(D'), \quad
x\otimes \al\lra  \big(\la(\phi_{\ka})x\big) \otimes\big(\al\!\circ\!\la(\psi_{\fc}^{-1})\big)
\end{gather}
the induced isomorphisms of the associated lines.
An \textsf{isomorphism between Fredholm operators $D$ and~$D'$} 
is a homomorphism $(\phi,\psi)\!:D\!\lra\!D'$
so that $\phi$ and $\psi$ are isomorphisms.\\

\noindent
Isomorphisms $\phi\!:X\!\lra\!X'$ and $\psi\!:Y\!\lra\!Y'$ 
between Banach vector spaces induce a homeomorphism
$$\cI_{\phi,\psi}\!:\cF(X,Y)\lra\cF(X',Y'), \qquad 
\cI_{\phi,\psi}(D)=\psi\circ D\circ\phi^{-1}.$$
In particular, $(\phi,\psi)\!:D\!\lra\!\cI_{\phi,\psi}(D)$ is an isomorphism of 
Fredholm operators for each $D\!\in\!\cF(X,Y)$.
Putting the isomorphisms~$\wt\cI_{\phi,\psi;D}$ in~\eref{cIphipsi_e}
together, we obtain a bundle map
\BE{wtphipsi_e}\wt\cI_{\phi,\psi}\!: {\det}_{X,Y}\lra \cI_{\phi,\psi}^*{\det}_{X',Y'}\EE
covering the identity on~$\cF(X,Y)$.\\

\noindent
{\bf\emph{Exact Triples.}}
An \sf{exact triple of Fredholm operators}, 
$$0\lra D'\lra D\lra D''\lra0,$$
is a commutative diagram
\BE{cTdiag_e}\begin{split}
\xymatrix{0\ar[r]& X'\ar[d]^{D'}\ar[r]^{\fI_X}& X\ar[d]^D\ar[r]^{\fJ_X}& 
X''\ar[d]^{D''}\ar[r]&0\\
0\ar[r]& Y'\ar[r]^{\fI_Y}& Y\ar[r]^{\fJ_Y}& Y''\ar[r]&0}
\end{split}\EE
of homomorphisms between Banach vector spaces with exact rows
and with Fredholm operators as columns.
A \sf{quasi-isomorphism}
\BE{ETisom_e5}\begin{split}
\xymatrix{0\ar[r]& D_{\tnT}'\ar[r]\ar[d]|{(\phi',\psi')} & D_{\tnT}\ar[r]\ar[d]|{(\phi,\psi)}
& D_{\tnT}''\ar[r]\ar[d]|{(\phi'',\psi'')}&0\\
0\ar[r]& D_{\tnB}'\ar[r]& D_{\tnB}\ar[r] & D_{\tnB}''\ar[r]&0}
\end{split}\EE
between exact triples 
$$0\lra D_{\tnT}'\lra D_{\tnT}\lra D_{\tnT}''\lra0 \quad\hbox{and}\quad
0\lra D_{\tnB}'\lra D_{\tnB}\lra D_{\tnB}''\lra0$$
of Fredholm operators are homomorphisms
$$\xymatrix{0\ar[r]& X_{\tnT}'\ar[r]\ar[d]^{\phi'} & X_{\tnT}\ar[r]\ar[d]^{\phi}
& X_{\tnT}''\ar[r]\ar[d]^{\phi''}&0&
0\ar[r]& Y_{\tnT}'\ar[r]\ar[d]^{\psi'} & Y_{\tnT}\ar[r]\ar[d]^{\psi}
& Y_{\tnT}''\ar[r]\ar[d]^{\psi''}&0\\
0\ar[r]& X_{\tnB}'\ar[r]& X_{\tnB}\ar[r] & X_{\tnB}''\ar[r]&0&
0\ar[r]& Y_{\tnB}'\ar[r]& Y_{\tnB}\ar[r] & Y_{\tnB}''\ar[r]&0}$$
of short exact sequences of Banach vector spaces so that 
$(\phi',\psi')$, $(\phi,\psi)$, and $(\phi'',\psi'')$ are quasi-isomorphisms
between the Fredholm operators~$D_{\tnT}'$ and~$D_{\tnB}'$,
$D_{\tnT}$ and~$D_{\tnB}$, and $D_{\tnT}''$ and~$D_{\tnB}''$, respectively.
An \sf{isomorphism} between exact triples of Fredholm operators as above
is a quasi-isomorphism between these exact triples as in~\eref{ETisom_e5}
so that the homomorphisms 
$\phi',\psi',\phi,\psi,\phi'',\psi''$ are isomorphisms.\\

\noindent
For Banach vector spaces $X,Y,X',Y',X'',Y''$, let
\BE{cTdfn_e}\begin{split}
&\cT(X,Y;X',Y';X'',Y'')\\
&\quad\subset \cF(X,Y)\times\cF(X',Y')\times\cF(X'',Y'')\times
\cB(X',X)\times\cB(X,X'')\times \cB(Y',Y)\times\cB(Y,Y'')
\end{split}\EE
be the subspace of tuples $(D,D',D'',\fI_X,\fJ_X,\fI_Y,\fJ_Y)$ so that 
\eref{cTdiag_e} is an exact triple of Fredholm operators.
Denote~by 
$$\pi_{\tnC},\pi_{\tnL},\pi_{\tnR}\!: \cT(X,Y;X',Y';X'',Y'')\lra
\cF(X,Y),\cF(X',Y'),\cF(X'',Y'')$$
the restrictions of the projection~maps. 
For $\star\!=\!',''$, denote by 
$$\cT^{\star}(X,Y;X',Y';X'',Y'')\subset \cT(X,Y;X',Y';X'',Y'')$$
the subspace of diagrams~\eref{cTdiag_e} so that $D^{\star}$ is an isomorphism.\\

\noindent
If $\ft\!\in\!\cT'(X,Y;X',Y';X'',Y'')$ is as in~\eref{cTdiag_e}, 
$(\fJ_X,\fJ_Y)$ is a quasi-isomorphism between the Fredholm operators $D$ and~$D''$.
With the notation as in~\eref{cIphipsi_e}, define
\BE{cIft1_e}\cI_{\ft}'\!:\la(D')\!\otimes\!\la(D'')\lra\la(D), \qquad
\cI_{\ft}'\big((1\!\otimes\!1^*)\!\otimes\!\si''\big)
=\wt\cI_{\fJ_X,\fJ_Y;D}^{-1}(\si'').\EE
If $\ft\!\in\!\cT''(X,Y;X',Y';X'',Y'')$ is as in~\eref{cTdiag_e}, 
$(\fI_X,\fI_Y)$ is a quasi-isomorphism between the Fredholm operators~$D'$ and~$D$.
Define
\BE{cIft2_e}\cI_{\ft}''\!:\la(D')\!\otimes\!\la(D'')\lra\la(D), \qquad
\cI_{\ft}''\big(\si'\!\otimes\!(1\!\otimes\!1^*)\big)
=\wt\cI_{\fI_X,\fI_Y;D'}(\si').\EE

\vspace{.1in}

\noindent
{\bf\emph{Direct Sums.}}
For Banach vector spaces $X',Y',X'',Y''$, the direct sum operation
$$\oplus\!: \cF(X',Y')\!\times\!\cF(X'',Y'') \lra 
\cF(X'\!\oplus\!X'',Y'\!\oplus\!Y''), 
\qquad (D',D'')\lra D'\!\oplus\!D'',$$
is a continuous map.
For any $D\!\in\!\cF(X,Y)$ and a Banach vector space~$Z$,
the projections 
$$(\phi,\psi)\!:(Z\!\oplus\!X,Z\!\oplus\!Y)\lra (X,Y) \qquad\hbox{and}\qquad
(\phi,\psi)\!:(X\!\oplus\!Z,Y\!\oplus\!Z)\lra (X,Y)$$
are quasi-isomorphism between the Fredholm operators 
$\id_Z\!\oplus\!D$, $D$, and $D\!\oplus\!\id_Z$.
Via~\eref{cIphipsi_e}, they thus determine identifications
\begin{alignat}{1}\label{addid_e}
\la(\id_Z\!\oplus\!D)=&\la(D)=\la(D\!\oplus\!\id_Z),\\
(0,x_1)\!\w\!\ldots\!\w\!(0,x_k)\otimes \big((0,y_1)\!\w\!\ldots\!\w\!(0,y_{\ell})\big)^*
&\llra x_1\!\w\!\ldots\!\w\!x_k\otimes \big(y_1\!\w\!\ldots\!\w\!y_{\ell}\big)^*\notag\\
&\llra (x_1,0)\!\w\!\ldots\!\w\!(x_k,0)\otimes 
\big((y_1,0)\!\w\!\ldots\!\w\!(y_{\ell},0)\big)^*.\notag
\end{alignat}

\vspace{.15in}

\noindent
We denote by
\begin{gather*}
R_{X',X''}\!: X'\!\oplus\!X''\lra X''\!\oplus\!X'
\qquad\hbox{and}\\
\cR_{\cF}\!:  
\cF(X',Y')\!\times\!\cF(X'',Y'') \lra  \cF(X'',Y'')\!\times\!\cF(X',Y') 
\end{gather*}
the maps interchanging the two factors.
Let
\begin{equation*}\begin{split}
\oplus'\!:
\cF(X',Y')\!\times\!\cF(X'',Y'') &\lra\cF(X''\!\oplus\!X',Y''\!\oplus\!Y')\qquad\hbox{and}\\
\bigoplus\!: \cF(X',Y')\!\times\!\cF(X'',Y'')\!\times\!\cF(X''',Y''')&\lra
\cF(X'\!\oplus\!X''\!\oplus\!X''',Y'\!\oplus\!Y''\!\oplus\!Y''')
\end{split}\end{equation*}
be the compositions 
\begin{alignat}{1}\label{commut_e0} 
\oplus\circ \cR_{\cF} &=\cI_{R_{X',X''},R_{Y',Y''}}\circ\oplus \qquad\hbox{and}\\
\label{commut_e0b}
\oplus\circ \oplus\!\times\!\id_{\cF(X''',Y''')} &=
  \oplus\circ\id_{\cF(X',Y')}\!\times\!\oplus\,,
\end{alignat}
respectively.\\

\noindent
We associate the direct sum $D'\!\oplus\!D''$ of Fredholm operators 
$D'\!:X'\!\lra\!Y'$ and $D''\!:X''\!\lra\!Y''$ with the commutative diagram
\BE{sumEX_e}\begin{split}
\xymatrix{0\ar[r]& X'\ar[d]^{D'}
\ar[r]^>>>>>{\fI_X}& X'\!\oplus\!X''\ar[d]|{D'\oplus D''}\ar[r]^<<<<<{\fJ_X}& 
X''\ar[d]^{D''}\ar[r]&0
&*\txt{$\fI_X(x')=(x',0)$ \\ $\fJ_X(x',x'')=x''$}\\
0\ar[r]& Y'\ar[r]^>>>>>{\fI_Y}& Y'\!\oplus\!Y''\ar[r]^<<<<<{\fJ_Y}& Y''\ar[r]&0
&*\txt{$\fI_Y(y')=(y',0)$\\ $\fJ_Y(y',y'')=y''$\,.}}
\end{split}\EE
This yields an embedding
\begin{gather*}
\io_{\oplus}\!: \cF(X',Y')\!\times\!\cF(X'',Y'') \lra 
\cT(X'\!\oplus\!X'',Y'\!\oplus\!Y'';X',Y';X'',Y'')\qquad\hbox{s.t.}\\ 
\pi_{\tnC}\!\circ\!\io_{\oplus}=\oplus, \quad
\pi_{\tnL}\!\circ\!\io_{\oplus}=\pi_1,\quad
\pi_{\tnR}\!\circ\!\io_{\oplus}=\pi_2,
\end{gather*} 
where 
$$\pi_1,\pi_2\!:\cF(X',Y')\!\times\!\cF(X'',Y'') \lra \cF(X',Y'),\cF(X'',Y'')$$
are the projection maps.\\

\noindent
{\bf\emph{Compositions.}}
For Banach vector spaces $X_1,X_2,X_3$, the composition map
$$\cC_{X_2}\!: \cF(X_1,X_2)\times\cF(X_2,X_3) \lra \cF(X_1,X_3),
\qquad (D_1,D_2)\lra D_2\circ D_1,$$
is continuous as well.
If $X_4$ is another Banach vector space, let 
$$\cC_{X_2,X_3}\!:\cF(X_1,X_2)\times \cF(X_2,X_3)\times \cF(X_3,X_4)\lra \cF(X_1,X_4)$$
denote the compositions
\BE{assoc_e0}\cC_{X_3}\circ \big\{\cC_{X_2}\!\times\!\id_{\cF(X_3,X_4)}\big\}
=\cC_{X_2}\circ \big\{\id_{\cF(X_1,X_2)}\!\times\!\cC_{X_3}\big\}.\EE

\vspace{.15in}

\noindent
With the notation as in~\eref{cTdfn_e}, we denote~by
$$\cC_{\cT}\!: \cT(X_1,X_2;X_1',X_2';X_1'',X_2'')\times
\cT(X_2,X_3;X_2',X_3';X_2'',X_3'') \lra \cT(X_1,X_3;X_1',X_3';X_1'',X_3'')$$
the continuous map sending commutative diagrams
\BE{cCcTdiag_e}\begin{split}
\xymatrix{0\ar[r]& X_1'\ar[d]^{D_1'}\ar[r]^{\fI_1}& X_1\ar[d]^{D_1}\ar[r]^{\fJ_1}& 
X_1''\ar[d]^{D_1''}\ar[r]&0&
0\ar[r]& X_2'\ar[d]^{D_2'}\ar[r]^{\fI_2}& X_2\ar[d]^{D_2}\ar[r]^{\fJ_2}& 
X_2''\ar[d]^{D_2''}\ar[r]&0\\
0\ar[r]& X_2'\ar[r]^{\fI_2}& X_2\ar[r]^{\fJ_2}& X_2''\ar[r]&0&
0\ar[r]& X_3'\ar[r]^{\fI_3}& X_3\ar[r]^{\fJ_3}& X_3''\ar[r]&0}
\end{split}\EE
to the commutative diagram
\BE{cCcTdiag_e2}\begin{split}
\xymatrix{0\ar[r]& X_1'\ar[d]|{D_2'\circ D_1'}\ar[r]^{\fI_1}& 
X_1\ar[d]|{D_2\circ D_1}\ar[r]^{\fJ_1}& 
X_1''\ar[d]|{D_2''\circ D_1''}\ar[r]&0\\
0\ar[r]& X_3'\ar[r]^{\fI_3}& X_3\ar[r]^{\fJ_3}& X_3''\ar[r]&0\,.}
\end{split}\EE
We note that 
\BE{cTcomm_e} (\pi_{\tnC},\pi_{\tnL},\pi_{\tnR})\circ\cC_{\cT}
=\big(\cC_{X_2}\!\circ\!(\pi_{\tnC}\!\circ\!\pi_1,\pi_{\tnC}\!\circ\!\pi_2),
\cC_{X_2'}\!\circ\!(\pi_{\tnL}\!\circ\!\pi_1,\pi_{\tnL}\!\circ\!\pi_2),
\cC_{X_2''}\!\circ\!(\pi_{\tnR}\!\circ\!\pi_1,\pi_{\tnR}\!\circ\!\pi_2)\big),\EE
where
\begin{equation*}\begin{split}
\pi_1,\pi_2\!:\cT(X_1,X_2;X_1',X_2';X_1'',X_2'')&\times
\cT(X_2,X_3;X_2',X_3';X_2'',X_3'')\\ 
&\lra \cT(X_1,X_2;X_1',X_2';X_1'',X_2''),\cT(X_2,X_3;X_2',X_3';X_2'',X_3'')
\end{split}\end{equation*}
are the projection maps.\\

\noindent
We associate the composition $D_2\!\circ\!D_1$ of Fredholm operators 
$D_1\!:X_1\!\lra\!X_2$ and \hbox{$D_2\!:X_2\!\lra\!X_3$} with the exact~triple
\BE{compEX_e}\begin{split}
\xymatrix{0\ar[r]& X_1\ar[d]^{D_1}
\ar[r]^>>>>>{\fI_X}& X_1\!\oplus\!X_2\ar[d]|{D_2\circ D_1\oplus\id_{X_2}}\ar[r]^<<<<<{\fJ_X}& 
X_2\ar[d]^{D_2}\ar[r]&0
&*\txt{$~~~~\fI_X(x_1)=(x_1,D_1x_1)$\\ $\fJ_X(x_1,x_2)=D_1x_1\!-\!x_2$}\\
0\ar[r]& X_2\ar[r]^>>>>>{\fI_Y}& X_3\!\oplus\!X_2\ar[r]^<<<<<{\fJ_Y}& X_3\ar[r]&0
&*\txt{$~~~~~\fI_Y(x_2)=(D_2x_2,x_2)$\\ $\fJ_Y(x_3,x_2)=x_3\!-\!D_2x_2$\,.}}
\end{split}\EE
This yields an embedding
\begin{gather*}
\io_{\cC}\!: \cF(X_1,X_2)\!\times\!\cF(X_2,X_3) \lra 
\cT(X_1\!\oplus\!X_2,X_3\!\oplus\!X_2;X_1,X_2;X_2,X_3) \qquad\hbox{s.t.}\\ 
\pi_{\tnC}\!\circ\!\io_{\cC}(D_1,D_2)=\cC_{X_2}(D_1,D_2)\oplus\id_{X_2}, \quad
\pi_{\tnL}\!\circ\!\io_{\cC}=\pi_1,\quad \pi_{\tnR}\!\circ\!\io_{\cC}=\pi_2.
\end{gather*}
Combining the first identity above with~\eref{addid_e}, we obtain 
$$\io_{\cC}^*\pi_{\tnC}^*{\det}_{X_1\oplus X_2,X_3\oplus X_2}=\cC_{X_2}^*{\det}_{X_1,X_3}.$$
If $D\!\in\!\cF(X,Y)$, the compositions $D\!\circ\!\id_X$ and $\id_Y\!\circ\!D$
correspond to elements~of
$$\cT'(X\!\oplus\!X,Y\!\oplus\!X;X,X;X,Y) \qquad\hbox{and}\qquad
\cT''(X\!\oplus\!Y,Y\!\oplus\!Y;X,Y;Y,Y),$$
respectively, with the isomorphisms $\cI_{\ft}'$ and $\cI_{\ft}''$ of~\eref{cIft1_e} 
and~\eref{cIft2_e} given~by
$$\cI_{\ft}'\big(1\!\otimes\!1^*\otimes x\!\otimes\!\be\big)= x\!\otimes\!\be 
\qquad\hbox{and}\qquad
\cI_{\ft}''\big(x\!\otimes\!\be \otimes 1\!\otimes\!1^*\big)=  x\!\otimes\!\be $$
under the identifications~\eref{addid_e}.\\

\noindent
{\bf\emph{Dualizations.}}
For each Banach vector space $X$, let $X^*$ denote the dual Banach vector space,
i.e.~the space $\Hom_{\R}(X,\R)$ of bounded linear functionals $X\!\lra\!\R$.
For each $D\!\in\!\cF(X,Y)$, let \hbox{$D^*\!\in\!\cF(Y^*,X^*)$} denote the dual operator,~i.e.
$$\{D^*\be\}(x)=\be(Dx) \qquad\forall\,\be\!\in\!Y^*,\,x\!\in\!X.$$
The map
$$\cD\!:\cF(X,Y)\lra \cF(Y^*,X^*), \qquad \cD(D)= D^*,$$
is then continuous.
For each  $D\!\in\!\cF(X,Y)$, the homomorphisms
\BE{fkfcdual_e}
\begin{aligned}
\cD_D\!:\ka(D)&\lra\fc(D^*)^*, &\quad 
  \big\{\cD_D(x)\big\}(\al\!+\!\Im\,D^*)&=\al(x)
  &~~&\forall\,x\!\in\!\ka(D),\,\al\!\in\!X^*,\\
\cD_D\!:\fc(D)^*&\lra\ka(D^*),&\quad  \big\{\cD_D(\be)\big\}(y)&=\be(y\!+\!\Im\,D)
&~~&\forall\,\be\!\in\!\fc(D)^*,\,y\!\in\!Y,
\end{aligned}\EE
are isomorphisms.\\

\noindent
For each exact triple $\ft$ of Fredholm operators as in \eref{cTdiag_e},
we define the dual triple~$\ft^*$ to be given by the diagram
\BE{cTdiagdual_e}\begin{split}
\xymatrix{0\ar[r]& Y''^*\ar[d]^{D''^*}\ar[r]^{\fJ_Y^*}& Y^*\ar[d]^{D^*}\ar[r]^{\fI_Y^*}& 
Y'^*\ar[d]^{D'^*}\ar[r]&0\\
0\ar[r]& X''^*\ar[r]^{\fJ_X^*}& X^*\ar[r]^{\fI_X^*}& X'^*\ar[r]&0\,.}
\end{split}\EE
This defines an embedding
\begin{gather}
\cD_{\cT}\!: \cT(X,Y;X',Y';X'',Y'')\lra\cT(Y^*,X^*;Y''^*,X''^*;Y'^*,X'^*)
\qquad\hbox{s.t.}\notag\\
\label{cDcTcomm_e} 
\pi_{\tnC}\!\circ\!\cD_{\cT}=\cD\!\circ\!\pi_{\tnC}, \quad
\pi_{\tnL}\!\circ\!\cD_{\cT}=\cD\!\circ\!\pi_{\tnR},\quad
\pi_{\tnR}\!\circ\!\cD_{\cT}=\cD\!\circ\!\pi_{\tnL}\,.
\end{gather}

\vspace{.15in}

\noindent
{\bf\emph{Complex Orientations.}} 
Let $\fI$ be a complex structure on a real vector space~$V$.
The homomorphism
$$\Re\!:\Hom_{\C}(V,\C)\lra V^*\!\equiv\!\Hom_{\R}(V,\R), \quad
\{\Re(\al)\}(v)=\Re\big(\al(v)\!\big)
~~\forall\,\al\!\in\!\Hom_{\C}(V,\C),\,v\!\in\!V,$$
is an isomorphism.
Via this isomorphism, $\fI$ induces a complex structure on~$V^*$,
which we still denote by~$\fI$, so~that
\BE{Cdualpair_e}\{\fI\al\}(v)=\al(\fI v) \quad\forall\,\al\!\in\!V^*,\,v\!\in\!V.\EE
If $V$ is finite-dimensional with $\C$-basis $e_1,\ldots,e_n$, then 
$e_1,\fI e_1,\ldots,e_n,\fI e_n$ is an $\R$-basis for~$V$
determining the \sf{complex orientation} of~$V$.
If $e_1^*,\ldots,e_n^*$ is the dual $\C$-basis for~$V^*$, then
$$e_1^*,\fI e_1^*\!=\!-(\fI e_1)^*,\ldots,e_n^*,\fI e_n^*\!=\!-(\fI e_n)^*$$
is an $\R$-basis for~$V^*$ determining the complex orientation of~$V^*$.
Thus,
\BE{Cdualpair_e2}\big(e_1\!\w\!\fI e_1\!\w\!\ldots\!\w\!e_n\!\w\!\fI e_n\big)^{\!*}
=\cP\big(e_1^*\!\w\!\fI e_1^*\!\w\!\ldots\!\w\!e_n^*\!\w\!\fI e_n^*\big)
\in\la^*(V),\EE
i.e.~the isomorphism~\eref{cPisom_e} with~$n$ replaced by~$2n$ intertwines 
the complex orientations of~$\la(V^*)$ and~$\la^*(V)$.\\ 

\noindent
Suppose $X,Y$ are Banach vector spaces with complex structures.
We then denote~by 
$$\cF_{\C}(X,Y)\subset\cF(X,Y)$$ 
the closed subspace of $\C$-linear Fredholm operators.
For each~$D\!\in\!\cF_{\C}(X,Y)$, $\ka(D)$ and $\fc(D)$ are finite-dimensional 
complex vector spaces.
Thus, the real lines $\la(\ka(D))$, $\la(\fc(D))$, and~$\la(D)$ 
have canonical orientations, which we will call the \sf{complex orientations}.
In this case, the Banach vector spaces~$X^*,Y^*$ inherit complex structures 
from~$X,Y$, $D^*\!\in\!\cF_{\C}(Y^*,X^*)$, and 
the isomorphisms~\eref{fkfcdual_e} are $\C$-linear.\\

\noindent
If $X,Y,X',Y',X'',Y''$ are Banach vector spaces with complex structures,
we denote~by
$$\cT_{\C}(X,Y;X',Y';X'',Y'')\subset\cT(X,Y;X',Y';X'',Y'')$$
the subspace of exact triples as in~\eref{cTdiag_e}
so that the Fredholm operators $D',D,D''$ and the homomorphisms
$\fI_X,\fJ_X,\fI_Y,\fJ_Y$ are $\C$-linear.

\subsection{Properties}
\label{detLBprop_subs}

\noindent
The topologies on the line bundles ${\det}_{X,Y}$ should satisfy 
a number of important compatibility properties, which we now describe.\\

\begin{mywidth}
{\bf\emph{Naturality I.}}\label{NaturI_prop} 
The bundle map~$\wt\cI_{\phi,\psi}$ in~\eref{wtphipsi_e} is continuous
for all isomorphisms \hbox{$\phi\!:X\!\lra\!X'$} and $\psi\!:Y\!\lra\!Y'$ 
between Banach vector spaces.\\
\end{mywidth}

\begin{mywidth}
{\bf\emph{Complex Orientations.}}\label{COrient_prop}
If $X,Y$ are Banach vector spaces with complex structures, the complex orientations of
the lines~$\la(D)$ with $D\!\in\!\cF_{\C}(X,Y)$ determine an orientation
on the restriction of~${\det}_{X,Y}$ to~$\cF_{\C}(X,Y)$.\\
\end{mywidth}

\noindent
The substance of the Complex Orientations property is that 
the complex orientations of the lines~$\la(D)$, wherever defined,
are continuous with respect to the topology of the relevant restriction 
of~${\det}_{X,Y}$.\\

\noindent
If $X,Y$ are Banach vector spaces and  $T\!\in\!\cB(Y,X)$, let
$$U_T=\big\{P\!\in\!\cB(X,Y)\!:\|TP\|\!<\!1\big\};$$
this is an open subset of $\cB(X,Y)$.
If in addition $P\!\in\!\cB(X,Y)$, define 
$$\Phi_{T;P}\!\in\!\cB(X,X) \qquad\hbox{by}\qquad \Phi_{T;P}(x)=x\!+\!TPx.$$
The operator $\Phi_{T;P}$ is invertible for every $P\!\in\!U_T$ and the~map
$$U_T\lra\cF(X,X), \qquad P\lra 
\Phi_{T;P}^{-1}=\sum_{r=0}^{\i}\!(-1)^r(TP)^r\,,$$
is continuous.\\

\noindent
We define
\begin{gather*}
\cF^*(X,Y)=\big\{D\!\in\!\cF(X,Y)\!:\,\fc(D)=\{0\}\big\},\\
\pi\!:\ka(X,Y)\!=\!\big\{(D,x)\!\in\!\cF^*(X,Y)\!\times\!X\!:Dx\!=\!0\big\}
\lra\cF^*(X,Y), \quad\pi(D,x)=D.
\end{gather*}
The first set above is an open subset of $\cF(X,Y)$.
For each $D\!\in\!\cF^*(X,Y)$ and each right inverse $T\!:Y\!\lra\!X$ of~$D$,
the~map
$$(D\!+\!U_T)\!\times\!X\lra (D\!+\!U_T)\!\times\!X, 
\qquad (D\!+\!P,x)\lra \big(D\!+\!P,\Phi_{T;P}(x)\big),$$
is continuous, linear on each fiber of the projection to the first component,
and restricts to a bijection
$$\ka(X,Y)\big|_{\pi^{-1}(D+U_T)}\lra (D\!+\!U_T)\!\times\!\ka(D).$$
Thus, $\ka(X,Y)$ is a subbundle of  the trivial Banach bundle 
$$\cF^*(X,Y)\!\times\!X\lra \cF^*(X,Y).$$

\vspace{.1in}

\begin{mywidth}
{\bf\emph{Normalization I.}}\label{NormalI_prop} 
The topology of $\det_{X,Y}|_{\cF^*(X,Y)}$ is the topology 
of the top exterior power of the vector bundle~$\ka(X,Y)$.\\
\end{mywidth}

\noindent
This condition can alternatively be described as follows.
For $D\!\in\!\cF^*(X,Y)$, each right inverse $T\!:Y\!\lra\!X$ of~$D$,
and $P\!\in\!\cB(X,Y)$, let 
\BE{Phidfn_e} 
\Phi_{D,T;P}\!:\,\ka(D\!+\!P)\lra \ka(D), \qquad
 \Phi_{D,T;P}(x)=\Phi_{T;P}(x).\EE
This map is a bijection for every $P\!\in\!U_T$ and thus induces an isomorphism
$$ \wt\cI_{D,T;D+P}\!:
\la(D)=\la(\ka(D))\otimes\R \lra \la\big(\ka(D\!+\!P)\big)\otimes\R=  \la(\ka(D\!+\!P)).$$
Putting these isomorphisms together, we obtain a bundle map
\BE{surjtriv_e}\wt\cI_{D,T}\!: (D\!+\!U_T)\!\times\!\la(D)\lra {\det}_{X,Y}\big|_{D+U_T}\EE
covering the identity on the open neighborhood $D\!+\!U_T$ of $D$ in~$\cF^*(X,Y)$.
Normalization~I is equivalent to the condition that
the map~$\wt\cI_{D,T}$ is continuous for every 
$D\!\in\!\cF^*(X,Y)$ and right inverse $T\!:Y\!\lra\!X$ of~$D$.\\

\noindent
If $X,Y$ are Banach vector spaces with complex structures, 
every surjective $\C$-linear Fredholm operator~$D$ admits 
a $\C$-linear right inverse~$T$.
The isomorphism~\eref{Phidfn_e} is then $\C$-linear whenever \hbox{$P\!\in\!U_T$}
is $\C$-linear.
Thus, the complex orientations of the lines~$\la(D\!+\!T)$ with 
$P\!\in\!U_T$ $\C$-linear determine an orientation on the restriction of~${\det}_{X,Y}$
over the intersection of $D\!+\!U_T$ with 
$$\cF_{\C}^*(X,Y)\equiv\cF_{\C}(X,Y)\!\cap\!\cF^*(X,Y)\,.$$
Thus, the Complex Orientations property over~$\cF_{\C}^*(X,Y)$
follows from the Normalization~I property.\\

\begin{mywidth}
{\bf\emph{Exact Triples.}} There exists a collection of (continuous) line bundle isomorphisms
\BE{ETisom_e}\Psi\!: 
\pi_{\tnL}^*{\det}_{X',Y'}\otimes \pi_{\tnR}^*{\det}_{X'',Y''}
\lra\pi_{\tnC}^*{\det}_{X,Y}\EE
over $\cT(X,Y;X',Y';X'',Y'')$ parametrized by the tuples $(X,Y;X',Y';X'',Y'')$ 
of Banach vector spaces with the following properties.\\ 
\end{mywidth}

\begin{mywidth2}
{\bf\emph{Naturality II.}}\label{NaturII_prop} The isomorphisms $\Psi$ commute
with the isomorphisms~\eref{cIphipsi_e} induced by 
quasi-isomorphisms of exact triples of Fredholm operators, i.e.~the diagram
$$\xymatrix{ \la(D_{\tnT}')\otimes\la(D_{\tnT}'') 
\ar[d]|{\wt\cI_{\phi',\psi';D_{\tnT}'}\otimes\wt\cI_{\phi'',\psi'';D_{\tnT}''}}
\ar[rrr]^>>>>>>>>>>>>>>>{\Psi_{\tnT}}&&& 
\la(D_{\tnT}) \ar[d]|{\wt\cI_{\phi,\psi;D_{\tnT}}}\\
\la(D_{\tnB}')\otimes\la(D_{\tnB}'') 
\ar[rrr]^>>>>>>>>>>>>>>>{\Psi_{\tnB}}&&& \la(D_{\tnB})\,,}$$
where $\Psi_{\tnT}$ and $\Psi_{\tnB}$
are the isomorphisms~\eref{ETisom_e} for the top and bottom exact triples of
Fredholm operators in~\eref{ETisom_e5}, commutes
for every quasi-isomorphism of exact triples of Fredholm operators as in~\eref{ETisom_e5}.\\
\end{mywidth2} 

\begin{mywidth2}
{\bf\emph{Normalization II.}}\label{NormalII_prop} 
For each $\ft\!\in\!\cT(X,Y;X',Y';X'',Y'')$ as in \eref{cTdiag_e}
with $D'\!\in\!\cF^*(X',Y')$ and $D''\!\in\!\cF^*(X'',Y'')$, 
the restriction $\Psi_{\ft}$ of~$\Psi$ to the fiber over~$\ft$
is the canonical isomorphism $\w_{\ka(D)}$ of Lemma~\ref{ses_lmm} 
for the short exact sequence
$$0\lra\ka(D')\lra\ka(D)\lra\ka(D'')\lra0$$
of finite-dimensional vector spaces.\\
\end{mywidth2}

\begin{mywidth2}
{\bf\emph{Normalization III.}}\label{NormalIII_prop} 
For each $\star\!=\!',''$ and $\ft\!\in\!\cT^{\star}(X,Y;X',Y';X'',Y'')$,
the restriction $\Psi_{\ft}$ of~$\Psi$ to the fiber over~$\ft$ is the corresponding 
isomorphism~$\cI_{\ft}^{\star}$ of~\eref{cIft1_e} or~\eref{cIft2_e}.\\
\end{mywidth2}

\noindent
With $\io_{\oplus}$ as below~\eref{sumEX_e},  $D'\!\in\!\cF(X',Y')$, and 
$D''\!\in\!\cF(X'',Y'')$, let
$$\wt\oplus_{D',D''}\equiv\Psi_{\io_{\oplus}(D', D'')}\!: 
\la(D')\otimes\la(D'')\lra\la(D'\!\oplus\!D'')$$
be the corresponding exact triples isomorphism~\eref{ETisom_e}.
With $\io_C$ as below~\eref{compEX_e}, $D_1\!\in\!\cF(X_1,X_2)$, and $D_2\!\in\!\cF(X_2,X_3)$,
let
$$\wt\cC_{D_1,D_2}\!\equiv\!\Psi_{\io_{\cC}(D_1,D_2)}\!:~\la(D_1)\!\otimes\!\la(D_2)
 \lra \la(D_2\!\circ\!D_1)$$
be the corresponding isomorphism~\eref{ETisom_e}.
By the next four properties, these isomorphisms provide liftings
of~\eref{commut_e0}, \eref{commut_e0b}, \eref{assoc_e0}, and~\eref{cTcomm_e}  
to determinant line bundles.\\

\begin{mywidth2}
{\bf\emph{Direct Sums~I.}}\label{DirSumI_prop} 
For all $D'\!\in\!\cF(X',Y')$ and $D''\!\in\!\cF(X'',Y'')$, the diagram
\BE{DirSum_diag1}\begin{split}
\xymatrix{\la(D')\otimes\la(D'')
\ar[rr]^>>>>>>>>>>>{\wt\oplus_{D',D''}}  \ar[d]_R &&
\la(D'\!\oplus\!D'') \ar[d]|{\wt\cI_{R_{X',X''},R_{Y',Y''};D'\oplus D''}} \\
 \la(D'')\otimes \la(D')  \ar[rr]^{\wt\oplus_{D'',D'}}&&
\la(D''\oplus D')}
\end{split}\EE
commutes.\\
\end{mywidth2}

\begin{mywidth2}
{\bf\emph{Direct Sums~II.}}\label{DirSumII_prop}
For all $D'\!\in\!\cF(X',Y')$, $D''\!\in\!\cF(X'',Y'')$, and $D'''\!\in\!\cF(X''',Y''')$,
the diagram 
\BE{DirSum_diag2}\begin{split}
\xymatrix{\la(D')\otimes\la(D'')\otimes\la(D''')
\ar[rr]^>>>>>>>>>>>{\id\otimes{\wt\oplus_{D'',D'''}}}  
\ar[d]|{\wt\oplus_{D',D''}\otimes\id} &&
\la(D')\otimes\la(D''\!\oplus\!D''') \ar[d]|{\wt\oplus_{D',D''\oplus D'''}} \\
 \la(D'\!\oplus\!D'')\otimes \la(D''')  
 \ar[rr]^{\wt\oplus_{D'\oplus D'',D'''}}&&
\la(D'\!\oplus\!D''\!\oplus\!D''')}
\end{split}\EE
commutes.\\
\end{mywidth2}

\begin{mywidth2}
{\bf\emph{Compositions~I.}}\label{composI_prop}
For all 
$D_1\!\in\!\cF(X_1,X_2)$, $D_2\!\in\!\cF(X_2,X_3)$,
and $D_3\!\in\!\cF(X_3,X_4)$, the diagram
\BE{compdiag_e1}\begin{split}
\xymatrix{\la(D_1)\otimes\la(D_2)\otimes\la(D_3)
 \ar[rr]^>>>>>>>>>>>{\id\otimes \wt\cC_{D_2,D_3}} 
 \ar[d]|{\wt\cC_{D_1,D_2}\otimes\id} &&
\la(D_1)\otimes \la(D_3\!\circ\!D_2) \ar[d]|{\wt\cC_{D_1,D_3\circ D_2}} \\
\la(D_2\!\circ\!D_1)\otimes\la(D_3) \ar[rr]^{\wt\cC_{D_2\circ D_1,D_3}}&&
\la(D_3\!\circ D_2\circ\!D_1)}
\end{split}\EE
commutes.\\
\end{mywidth2}
 
\begin{mywidth2}
{\bf\emph{Compositions~II.}}\label{composII_prop}
For all 
\hbox{$\ft_1\!\in\!\cT(X_1,X_2;X_1',X_2';X_1'',X_2'')$} and
$\ft_2\!\in\!\cT(X_2,X_3;X_2',X_3';X_2'',X_3'')$ as in~\eref{cCcTdiag_e}, 
the diagram
\BE{compdiag_e2}\begin{split}
\xymatrix{\la(D_1')\otimes\la(D_1'')\otimes\la(D_2')\otimes\la(D_2'')
 \ar[d]|{\wt\cC_{D_1',D_2'}\otimes\wt\cC_{D_1'',D_2''}\circ\id\otimes R\otimes\id} 
 \ar[rr]^>>>>>>>>>>>{\Psi_{\ft_1}\otimes\Psi_{\ft_2}} &&
\la(D_1)\otimes\la(D_2) \ar[d]|{\wt{C}_{D_1,D_2}}\\
\la(D_2'\!\circ\!D_1')\otimes \la(D_2''\!\circ\!D_1'') 
\ar[rr]^>>>>>>>>>>>>>>>>>>>{\Psi_{\cC_{\cT}(\ft_1,\ft_2)}} && \la(D_2\circ\!D_1)}
\end{split}\EE
commutes.\\
\end{mywidth2}

\begin{mywidth2}
{\bf\emph{Complex Exact Triples.}}\label{CET_prop} 
If $X,Y,X',Y',X'',Y''$ are Banach vector spaces with complex structures
and $\ft\!\in\!\cT_{\C}(X,Y;X',Y';X'',Y'')$,
the restriction~$\Psi_{\ft}$ of~$\Psi$ to the fiber over~$\ft$ 
intertwines the complex orientations of $\la(D'),\la(D''),\la(D)$.\\
\end{mywidth2}

\begin{mythm}\label{main_thm}
There exist a collection of topologies on the line bundles ${\det}_{X,Y}\!\lra\!\cF(X,Y)$
corresponding to pairs $(X,Y)$ of Banach spaces 
and a collection of continuous line-bundle isomorphisms~\eref{ETisom_e} 
which satisfy the Naturality I,II, Complex Orientations,
Normalization I,II,III,
Direct Sums~I,II,  Compositions~I,II, and Complex Exact Triples properties. 
\end{mythm}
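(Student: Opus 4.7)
The plan is to construct the topology on ${\det}_{X,Y}$ and the isomorphisms $\Psi$ in tandem, by reducing every computation to the case of surjective Fredholm operators via finite-dimensional stabilizations and then invoking the linear-algebra preliminaries of Section~\ref{lin_alg}. First I would fix once and for all a compatible family of isomorphisms of determinants for short exact sequences of finite-dimensional vector spaces; by Theorem~\ref{classify_thm} this amounts to the choice of data~\eref{Aicisom_e}, and Section~\ref{lin_alg} packages it into the canonical isomorphisms $\w_V$ of Lemma~\ref{ses_lmm}.

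For each $D\!\in\!\cF(X,Y)$, choose a stabilization, i.e.\ an injection $T\!:\R^m\!\lra\!Y$ with $\Im\,D\!+\!\Im\,T\!=\!Y$. The stabilized operator $D_T\!:X\!\oplus\!\R^m\!\lra\!Y$, $D_T(x,v)\!=\!Dx\!+\!Tv$, is surjective of constant index on an open neighborhood $U_{D,T}$ of~$D$; the kernels $\{\ka(D'_T)\}_{D'\in U_{D,T}}$ form a finite-rank vector bundle, so $\la(\ka(D'_T))\!\otimes\!\la^*(\R^m)$ carries a natural line-bundle topology over $U_{D,T}$. For each $D'\!\in\!U_{D,T}$, the four-term exact sequence
$$0\lra\ka(D')\lra\ka(D'_T)\lra\R^m\lra\fc(D')\lra 0$$
induces via the isomorphisms of Section~\ref{lin_alg} a bijection $\la(D')\!\lra\!\la(\ka(D'_T))\!\otimes\!\la^*(\R^m)$. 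I define the topology on ${\det}_{X,Y}|_{U_{D,T}}$ by transporting the bundle topology through these bijections, and I declare the topology on the total space to be the one generated by all such~$U_{D,T}$.

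The main obstacle is verifying that two local structures coming from choices $(D_1,T_1)$ and $(D_2,T_2)$ agree on $U_{D_1,T_1}\!\cap\!U_{D_2,T_2}$. To this end, I would compare both with the refinement produced by the combined stabilization $T\!:\R^{m_1+m_2}\!\lra\!Y$, $T(v_1,v_2)\!=\!T_1v_1\!+\!T_2v_2$. Comparison with each individual $T_i$-trivialization reduces, through the naturality of $\w$ applied to the short exact sequence $0\!\to\!\R^{m_j}\!\to\!\R^{m_1+m_2}\!\to\!\R^{m_i}\!\to\!0$, to the surjective-operator normalization~\eref{surjtriv_e}, whose continuity is automatic because there the kernel bundle is literally the ambient kernel bundle and the isomorphism $\Phi_{D,T;P}$ of~\eref{Phidfn_e} depends continuously on $P$. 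This is precisely the compatibility whose absence invalidates the overlap argument of \cite[Theorem~A.2.2]{MS}, and whose systematic treatment is the purpose of Section~\ref{lin_alg}.

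With the topology in hand, the isomorphism $\Psi_{\ft}$ for an exact triple~$\ft$ as in~\eref{cTdiag_e} is constructed by simultaneously stabilizing $D',D,D''$ so that the three operators become surjective and the stabilizations themselves fit into an exact triple of finite-dimensional Fredholm problems; Normalization~II then forces $\Psi_{\ft}$ to be the $\w$-isomorphism of the resulting short exact sequence of kernels, and Section~\ref{lin_alg} certifies independence of the choices. Naturality~I, Naturality~III, and Normalization~I,II are immediate from the constructions, and Naturality~II follows from the functoriality of~$\w$. Finally, the Compositions and Direct Sums diagrams~\eref{compdiag_e1}, \eref{compdiag_e2}, \eref{DirSum_diag1}, and~\eref{DirSum_diag2} reduce, through the embeddings $\io_{\cC}$ and $\io_{\oplus}$ together with~\eref{assoc_e0}, \eref{commut_e0}, \eref{commut_e0b}, and~\eref{cTcomm_e}, to standard associativity and graded-commutativity identities for the $\w$-isomorphisms on nested short exact sequences of finite-dimensional vector spaces, which are the content of Section~\ref{lin_alg}.
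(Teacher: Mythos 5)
Your overall architecture --- transport the kernel-bundle topology of a finite-dimensional stabilization through the four-term sequence $0\!\to\!\ka(D')\!\to\!\ka(D'_T)\!\to\!\R^m\!\to\!\fc(D')\!\to\!0$, compare two stabilizations through their sum, and define $\Psi_{\ft}$ by stabilizing an exact triple to the surjective case and imposing Normalization~II --- is essentially the construction carried out in the paper (and coincides with alternative~(2) of Section~\ref{outline_subs}). The gap lies in the two places where you declare the remaining verifications ``automatic'' or ``standard''; these are exactly the steps at which \cite{MS} and \cite{Huang} go wrong, and they are the technical core of the theorem.

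First, the overlap of the $T_1$- and $T$-trivializations is not just ``the naturality of $\w_V$'': it is the assertion that the four-term isomorphism for $D'$ relative to $T$ factors as the one relative to $T_1$ followed by the further-stabilization isomorphism $\la(D'_{T_1})\!\to\!\la(D'_T)$ \emph{with the correct sign}. The sign exponent in~\eref{detisom_e2} depends on $\fd(\fc(D'))$, which jumps on the overlap; one must check that this dependence cancels in the transition map (it does for the convention~\eref{detisom_e2}, but not for the one in \cite{MS} --- that is precisely why the overlap maps there are discontinuous). This check is Proposition~\ref{overlap_prp}, whose proof rests on the associativity of the composition isomorphisms, Proposition~\ref{CompET_prp}. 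Second, defining $\Psi_{\ft}$ by stabilization and Normalization~II requires proving independence of the stabilization, which is the commutativity statement of Lemma~\ref{StabTrip_lmm}; the paper notes that even this special case is as elaborate as Proposition~\ref{CompET_prp2} itself. Moreover, the Compositions diagrams~\eref{compdiag_e1} and~\eref{compdiag_e2} do \emph{not} reduce to graded commutativity of $\w_V$ on short exact sequences: for non-surjective operators the relevant finite-dimensional data are the six-term snake sequences~\eref{ETles_e}, and the identities require choosing the auxiliary elements $u,v,w,\mu,\eta,\dots$ compatibly across the many distinct quotient spaces in Figures~\ref{CompDiags_fig}--\ref{CompDiags_fig3}, together with the sign bookkeeping of~\eref{sign_e} and~\eref{sign2_e}--\eref{sign3_e}. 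Asserting that this follows from ``standard associativity identities'' is exactly the deficiency of \cite[Proposition~D.2.6]{Huang} described in Remark~\ref{Huang_rmk}. Your text is an accurate outline of the strategy, but without the content of Propositions~\ref{CompET_prp} and~\ref{CompET_prp2} (or equivalent computations) the theorem is not established.
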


\noindent
We will refer to the Naturality~II property restricted to the isomorphisms
of exact triples of Fredholm operators as the \sf{isomorphism Naturality~II}
property.\\

\noindent
Any family of exact triple isomorphisms~$\Psi_{\ft}$ as in~\eref{ETisom_e} satisfying 
Normalization~II also satisfies Normalization~III for triples of surjective Fredholm operators
and Complex Exact Triples for 
triples of surjective $\C$-linear Fredholm operators.
By Lemma~\ref{surET_lmm}, such a family induces a continuous bundle map
over the subspace 
$$\cT^*(X,Y;X',Y';X'',Y'')\subset \cT(X,Y;X',Y';X'',Y'')$$
of exact triples as in~\eref{cTdiag_e} with surjective Fredholm operators $D,D',D''$
with respect to the topologies determined by the Normalization~I property.\\

\noindent
For an isomorphism $(\phi,\psi)\!:D\!\lra\!D'$ between Fredholm operators
$D\!:X\!\lra\!Y$ and $D'\!:X'\!\lra\!Y'$ as above~\eref{cIphipsi_e}, define
$$\wt\cI_{\phi,\psi;D}\!:\la(\phi^{-1})\otimes\la(D)\otimes\la(\psi)\lra 
\la(D'), \quad
\wt\cI_{\phi,\psi;D}
\big((1\otimes1^*)\otimes\si\otimes(1\otimes1^*)\big)
=\wt\cI_{\phi,\psi;D}(\si).$$
By Normalization~III, the diagram 
$$\xymatrix{\la(\phi^{-1})\otimes\la(D)\otimes\la(\psi)
 \ar[rr]^>>>>>>>>>>>{\id\otimes \wt\cC_{D,\psi}} 
 \ar[d]|{\wt\cC_{\phi^{-1},D}\otimes\id} \ar[drr]|{\wt\cI_{\phi,\psi;D}}
&&
\la(\phi^{-1})\otimes \la(\psi\!\circ\!D) 
\ar[d]|{\wt\cC_{\phi^{-1},\psi\circ D}} \\
\la(D\!\circ\!\phi^{-1})\otimes\la(\psi) 
\ar[rr]^{\wt\cC_{D\circ\phi^{-1},\psi}}&&
\la(\psi\!\circ\!D\circ\!\phi^{-1})\!=\!\la(D')}$$
commutes.\\

\noindent
Some of the properties listed in Theorem~\ref{main_thm}
are similarly implied by other properties:
\begin{enumerate}[label=$\bullet$,leftmargin=*]

\item Naturality~I follows from the continuity of~$\Psi$ in~\eref{ETisom_e} 
 and Normalization~III applied to the diagrams
$$\xymatrix{0\ar[r]& X\ar[d]^D\ar[r]^{\phi}& X'\ar[d]^{D'}\ar[r]& 
\{0\}\ar[d]\ar[r]&0\\
0\ar[r]& Y\ar[r]^{\psi}& Y'\ar[r]& \{0\}\ar[r]&0}$$

\item Complex Orientations follows from the continuity of~$\Psi$,
Normalization~I, and Complex Exact Triples;

\item the isomorphism Naturality~II property follows from Compositions~II 
and Normalization~II,III applied to the diagrams
$$\xymatrix{0\ar[r]& X_B'\ar[d]^{\phi'^{-1}}\ar[r]& 
X_B\ar[d]^{\phi^{-1}}\ar[r]& X_B''\ar[d]^{\phi''^{-1}}\ar[r]&0& 
0\ar[r]& X_B'\ar[d]|{D_T'\circ\phi'^{-1}}\ar[r]& X_B\ar[d]|{D_T\circ\phi^{-1}}\ar[r]& 
X_B''\ar[d]|{D_T''\circ\phi''^{-1}}\ar[r]&0\\
0\ar[r]& X_T'\ar[d]^{D_T'}\ar[r]& X_T\ar[d]^D\ar[r]& X_T''\ar[d]^{D_T''}\ar[r]&0& 
0\ar[r]& Y_T'\ar[d]^{\psi'}\ar[r]& Y_T\ar[d]^{\psi}\ar[r]&Y_T''\ar[d]^{\psi''}\ar[r]&0\\
0\ar[r]& Y_T'\ar[r]& Y_T\ar[r]& Y_T''\ar[r]&0&
0\ar[r]& Y_B'\ar[r]& Y_B\ar[r]& Y_B''\ar[r]&0}$$

\item the Exact Squares property below is implied 
by the Naturality~II, Normalization~II, and Compositions~II properties
(see Corollary~\ref{ExSQ_crl});

\item the two Direct Sums properties follow from the Exact Squares property applied to
the two diagrams in Figure~\ref{ESQvsComp_fig}
and the isomorphism Naturality~II property applied to the diagram
$$\xymatrix{&0\ar[r]&D''\ar[d]\ar[r]&  D'\!\oplus\!D''\ar[d]\ar[r]& D'\ar[r]\ar[d]&0\\
&0\ar[r]&D''\ar[r]&  D''\!\oplus\!D'\ar[r]& D'\ar[r]&0}$$

\item Compositions~II is implied by 
the Normalization~III and Exact Squares properties (see Subsection~\ref{KM_subs});

\item Compositions~I is implied by 
the isomorphism Naturality~II, Normalization~III, and 
Exact Squares properties (see Subsection~\ref{KM_subs});

\item the full Naturality~II property is implied by 
the isomorphism Naturality~II, Normalization~II,III, and Exact Squares properties
(see Subsection~\ref{classify_subs}).

\end{enumerate}

\vspace{.15in}

\noindent
By Proposition~\ref{overlap_prp}, 
a collection of exact triple isomorphisms~$\Psi_{\ft}$ as in~\eref{ETisom_e}
determines topologies on the line bundles~${\det}_{X,Y}$ 
which satisfy the Naturality~I and Normalization~I properties
if this collection satisfies the Normalization~II,III and Compositions~I,II properties.
By Corollary~\ref{ETcont_crl},
all these isomorphisms~$\Psi_{\ft}$ are continuous in the resulting topologies 
if in addition they satisfy the Naturality~II property.
By the above, the full Naturality and Compositions~I,II property can be replaced
by the isomorphism Naturality~II and Exact Squares properties.\\

\noindent
In summary, a collection of exact triple isomorphisms~$(\Psi_{\ft})_{\ft}$ as in~\eref{ETisom_e}
determines a system of topologies on the determinant line bundles~${\det}_{X,Y}$
which satisfy the Naturality~I and Normalization~I properties and in which 
the isomorphisms~$\Psi_{\ft}$ are continuous if this collection satisfies
the Normalization~II,III properties along with either
\begin{enumerate}[label=$\bullet$,leftmargin=*]

\item the Exact Squares property or

\item the Naturality~II and Compositions~I,II properties.

\end{enumerate}
In either case, such a collection of isomorphisms necessarily satisfies
the Exact Squares, Naturality~II, Compositions~I,II, and Direct Sums~I,II properties.
This is consistent with \cite[Theorem~1]{KM}; see Subsection~\ref{KM_subs} for more details.
If a collection of isomorphisms as above also satisfies the Complex Exact Triples properties,
then the resulting topologies satisfy the Complex Orientations property as~well.
A collection of isomorphisms~$\Psi_{\ft}$ satisfying all of the above properties
is specified by~\eref{cUDDdfn_e}.
Theorem~\ref{classify_thm} on page~\pageref{classify_thm} describes all other collections
of isomorphisms~$\Psi_{\ft}$ with these properties.\\

\begin{mywidth}
{\bf\emph{Exact Squares.}}\label{ESQs_prop}
For every commutative diagram 
\BE{SQexact_e1}\begin{split}
\xymatrix{&0\ar[d]&0\ar[d]&0\ar[d]&\\
0\ar[r]& D_{\tnT\tnL} \ar[r]^{\fI_{\tnT}}\ar[d]^{\fI_{\tnL}}& 
D_{\tnT\tnM} \ar[r]^{\fJ_{\tnT}}\ar[d]^{\fI_{\tnM}}& D_{\tnT\tnR}\ar[r]\ar[d]^{\fI_{\tnR}}& 0\\
0\ar[r]& D_{\tnC\tnL} \ar[r]^{\fI_{\tnC}}\ar[d]^{\fJ_{\tnL}}& 
D_{\tnC\tnM} \ar[r]^{\fJ_{\tnC}}\ar[d]^{\fI_{\tnM}}& 
D_{\tnC\tnR}\ar[r]\ar[d]^{\fJ_{\tnR}}& 0\\
0\ar[r]& D_{\tnB\tnL} \ar[r]^{\fI_{\tnB}}\ar[d]& 
D_{\tnB\tnM} \ar[r]^{\fJ_{\tnB}}\ar[d]& D_{\tnB\tnR}\ar[r]\ar[d]& 0\\
&0&0&0&}
\end{split}\EE
of exact rows and columns of Fredholm operators, the diagram 
\BE{SQexact_e2}\begin{split}
\xymatrix{ \la(D_{\tnT\tnL})\otimes\la(D_{\tnB\tnL}) 
 \otimes\la(D_{\tnT\tnR})\otimes \la(D_{\tnB\tnR})
\ar[d]|{~~\Psi_{\tnL}\otimes\Psi_{\tnR}} 
\ar[rrrr]^>>>>>>>>>>>>>>>>>>>>>{\Psi_{\tnT}\otimes\Psi_{\tnB}\,\circ\,\id \otimes R\otimes \id} 
&&&& \la(D_{\tnT\tnM})\otimes\la(D_{\tnB\tnM})\ar[d]^{\Psi_{\tnM}}\\ 
\la(D_{\tnC\tnL})\otimes\la(D_{\tnC\tnR})\ar[rrrr]^{\Psi_{\tnC}}&&&& \la(D_{\tnC\tnM})}
\end{split}\EE
of graded lines, where $\Psi_{\star}$ are the isomorphisms~\eref{ETisom_e}
corresponding to the \sf{t}op, \sf{c}enter, and \sf{b}ottom rows and
\sf{l}eft, \sf{m}iddle, and \sf{r}ight columns of the diagram~\eref{SQexact_e1},
commutes.\\
\end{mywidth}

\begin{figure}
\begin{gather*}
\xymatrix{&0\ar[d]&0\ar[d]&0\ar[d]&\\
0\ar[r]& 0 \ar[r]\ar[d]&  D' \ar[r]\ar[d]& D'\ar[r]\ar[d]& 0\\
0\ar[r]& D''\ar[r]\ar[d]& D'\!\oplus\!D'' \ar[r]\ar[d]& D'\ar[r]\ar[d]& 0\\ 
0\ar[r]& D''\ar[r]\ar[d]& D''\ar[r]\ar[d]& 0\ar[r]\ar[d]&\\
&0&0&0&\\
&0\ar[d]&0\ar[d]&0\ar[d]&\\
0\ar[r]& D' \ar[r]\ar[d]&  D' \ar[r]\ar[d]& 0\ar[r]\ar[d]& 0\\
0\ar[r]& D'\!\oplus\!D''\ar[r]\ar[d]& D'\!\oplus\!D''\!\oplus\!D''' \ar[r]\ar[d]& D'''\ar[r]\ar[d]& 0\\
0\ar[r]& D''\ar[r]\ar[d]& D''\!\oplus\!D'''\ar[r]\ar[d]& D'''\ar[r]\ar[d]& 0\\
&0&0&0&}
\end{gather*}
\caption{Exact squares of Fredholm operators corresponding to the two Direct Sums properties}
\label{ESQvsComp_fig}
\end{figure}

\noindent
By the proof of Lemma~\ref{NormalIdual_lmm}, the Normalization~I property can be replaced 
by a dual version.
Let 
$$\cF'(X,Y)\equiv\big\{D\!\in\!\cF(X,Y)\!:\,\ka(D)\!=\!0\big\}$$
be the space of injective Fredholm operators.
For each $D_0\!\in\!\cF'(X,Y)$, right inverse $S\!:\fc(D_0)\!\lra\!Y$ for 
$$q_{D_0}\!:\,Y\lra \fc(D_0), \qquad q_{D_0}(y)=y+\Im\,D_0,$$
and $D\!\in\!\cF(X,Y)$ sufficiently close to~$D_0$,
the homomorphism 
$$q_D\!\circ\!S\!:\,\fc(D_0)\lra\fc(D)$$
is an isomorphism and thus induces an isomorphism
\BE{cIdual_e}\wt\cI_{D_0,S;D}\!: \la(D_0)\lra\la(D)\,,
\quad \wt\cI_{D_0,S;D}(1\otimes\al)=  1\otimes \big(\al\!\circ\!\la(q_D\!\circ\!S)^{-1}\big)\,.\EE
Putting these isomorphisms together, we obtain a bundle map
\BE{cIdual_e2}\wt\cI_{D_0,S}\!:\,U_{D_0,S}\!\times\!\la(D_0)
\lra {\det}_{X,Y}|_{U_{D_0,S}}\EE
covering the identity on an open neighborhood $U_{D_0,S}$ of $D_0$ in $\cF'(X,Y)$.\\

\begin{mywidth}
{\bf\emph{Normalization I$'$.}}\label{NormalIpr_prop} The map $\wt\cI_{D_0,S}$ is continuous 
for every $D_0\!\in\!\cF'(X,Y)$, right inverse $S\!:\fc(D_0)\!\lra\!Y$ of~$q_{D_0}$, 
and sufficiently small open neighborhood $U_{D_0,S}$ of $D_0$ in $\cF'(X,Y)$.\\
\end{mywidth}

\noindent
The determinant line bundle is also compatible with dualizations of Fredholm operators.
Let~$\cD_D$ and~$\cP$ be as in~\eref{fkfcdual_e} and~\eref{cPisom_e}, respectively.\\

\begin{mywidth}
{\bf\emph{Dualizations.}} There exists a collection of (continuous) line bundle isomorphisms
\BE{wtcD_e}\wt\cD\!: {\det}_{X,Y}\lra \cD^*{\det}_{Y^*,X^*}\EE
over $\cF(X,Y)$ parametrized by pairs $(X,Y)$ 
of Banach vector spaces with the following properties.\\ 
\end{mywidth}

\begin{mywidth2}
{\bf\emph{Normalization~IV.}}\label{NormalIV_prop}
For every homomorphism $\de\!:L\!\lra\!\{0\}$ from a line,
$$\wt\cD_{\de}\big(x\!\otimes\!1^*\big)= 1\otimes \cP\big(\cD_{\de}(x)\big)
\qquad\forall~x\in\la(L)=L.$$
\end{mywidth2}

\vspace{.1in}

\begin{mywidth2}
{\bf\emph{Dual Exact Triples.}}\label{compdual_prop}
The isomorphisms~\eref{ETisom_e} and~\eref{wtcD_e} provide a lifting of \eref{cDcTcomm_e}
to determinant line bundles, i.e.~the diagram
\BE{compdiagdual_e}\begin{split}
\xymatrix{\la(D')\otimes\la(D'')
 \ar[d]|{\wt\cD_{D''}\otimes\wt\cD_{D'}\circ R}\ar[rr]^>>>>>>>>>{\Psi_{\ft}}&& 
\la(D)\ar[d]^{\wt\cD_D}\\
\la(D''^*)\otimes\la(D'^*)\ar[rr]^>>>>>>>>>{\Psi_{\ft^*}}&& \la(D^*)}
\end{split}\EE
commutes for every $\ft\!\in\!\cT(X,Y;X',Y';X'',Y'')$ as in \eref{cTdfn_e}.\\
\end{mywidth2}

\begin{mywidth}{\bf\emph{Dual Complex Orientations.}}\label{DualCOrient_prop}
If $X,Y$ are Banach vector spaces with complex structures and \hbox{$D\!\in\!\cF_{\C}(X,Y)$}, 
the isomorphism~\eref{wtcD_e} intertwines the complex orientations of~$\la(D)$
and~$\la(D^*)$.\\
\end{mywidth}

\noindent
By Corollary~\ref{compdual_crl} and Section~\ref{classify_subs}, 
each determinant line bundle system as in~Theorem~\ref{main_thm} on page~\pageref{main_thm}
 determines a unique
system of isomorphisms~$\wt\cD$ satisfying the above three properties.
Furthermore, there is a somewhat smaller family of determinant line bundle systems 
that satisfy a stronger
version of the Normalization~IV property:\\

\begin{mywidth2}
{\bf\emph{Normalization~IV$^{\star}$.}}\label{NormalIVst_prop}
For each $D\!\in\!\cF^*(X,Y)$, $\wt\cD_D$ is the canonical isomorphism 
induced by the first equation in~\eref{fkfcdual_e} and the pairing~\eref{cPisom_e}:
\BE{cDdfn_e0}\la(D)\lra\la(D^*), \qquad
x\!\otimes\!1^* \lra 1\otimes \cP\big(\la(\cD_D)x\big)\,.\EE
\end{mywidth2}

\vspace{.1in}

\noindent
By Lemma~\ref{surDual_lmm}, the isomorphisms~\eref{cDdfn_e0} give rise to a continuous
bundle map over $\cF^*(X,Y)$ for any system of topologies on determinant line bundles
as in Theorem~\ref{main_thm}.
In the proof of Corollary~\ref{compdual_crl}, we use this to show that 
the continuity of~\eref{wtcD_e} is implied by the Dual Exact Triples property.
However, the isomorphisms~\eref{wtcD_e} are 
compatible with the Normalization~IV$^{\star}$ property
only for some determinant line bundle systems,
including the one specified by the isomorphisms~$\Psi_{\ft}$ of~\eref{cUDDdfn_e}.\\

\noindent
The dualization isomorphisms~$\wt\cD_D$ given~by~\eref{cDdfn_e}
and the identity isomorphisms $A_{i,1}$ in~\eref{Aicisom_e} seem rather natural.
However, by Theorem~\ref{classify_thm} on page~\pageref{classify_thm}, the number of systems of topologies
on determinant line bundles compatible with these choices is still infinite.\\

\noindent
Combining the Dual Exact Triples property with the isomorphism Naturality~II property 
applied to the diagram
\BE{dualsumisom_e}\begin{split}
\xymatrix{&0\ar[r]&D''^*\ar[d]^{\id}\ar[r]^>>>>>{\fJ^*}&  
(D'\!\oplus\!D'')^*\ar[d]^{(T_X,T_Y)}\ar[r]^>>>>>{\fI^*}&  D'^*\ar[r]\ar[d]^{\id}&0
& T_X(\be)=(\be|_{Y''},\be|_{Y'})\\
&0\ar[r]&D''^*\ar[r]^>>>>>{\fI}&  D''^*\!\oplus\!D'^*\ar[r]^>>>>>{\fJ}&  
D'^*\ar[r]&0,& T_Y(\al)=(\al|_{X''},\al|_{X'})}
\end{split}\EE
where $\fI\!=\!(\fI_X,\fI_Y)$ and  $\fJ\!=\!(\fJ_X,\fJ_Y)$ 
are as in \eref{sumEX_e}, we find that the diagram
$$\xymatrix{\la(D')\otimes\la(D'')
 \ar[d]|{\wt\cD_{D''}\otimes\wt\cD_{D'}\circ R}\ar[rrr]^>>>>>>>>>>>>>>>{\wt\oplus_{D',D''}}&&& 
\la(D'\!\oplus\!D'')\ar[d]|{\wt\cI_{T_X,T_Y;(D'\oplus D'')^*}\circ \wt\cD_{D'\oplus D''} }\\
\la(D''^*)\otimes\la(D'^*)\ar[rrr]^>>>>>>>>>>>>>>>{\wt\oplus_{D''^*,D'^*}}&&& 
\la(D''^*\!\oplus\!D'^*)}$$
commutes, i.e.~the dualization and direct sum isomorphisms, $\wt\cD$ and $\wt\oplus$,
on the determinant lines are compatible.
Combining the Dual Exact Triples property with the isomorphism Naturality~II property applied 
to~\eref{dualsumisom_e} with $(D',D'')\!=\!(D_2\!\circ\!D_1,\id_{X_2})$, 
we find that the diagram
$$\xymatrix{\la(D_1)\otimes\la(D_2)
 \ar[d]|{\wt\cD_{D_2}\otimes\wt\cD_{D_1}\circ R}\ar[rrr]^>>>>>>>>>>>>>>>{\wt\cC_{D_1,D_2}}&&& 
\la(D_2\!\circ\!D_1)\ar[d]|{\wt\cD_{D_2\circ D_1} }\\
\la(D_2^*)\otimes\la(D_1^*)\ar[rrr]^>>>>>>>>>>>>>>>{\wt\cC_{D_2^*,D_1^*}}&&& 
\la(D_1^*\!\circ\!D_2^*)}$$
commutes, i.e.~the dualization and composition isomorphisms, $\wt\cD$ and $\wt\cC$,
on the determinant lines are compatible.\\

\noindent
Section~\ref{ET_subs} provides explicit formulas for the above isomorphisms $\Psi_{\ft}$,
 $\wt\oplus_{D',D''}$, $\wt\cC_{D_1,D_2}$, and~$\wt\cD_D$; 
see \eref{cUDDdfn_e},  \eref{cUDDdfn_e1}, \eref{cUDDdfn_e2}, and~\eref{cDdfn_e}, respectively.
Such formulas may be useful in some applications.

\section{Conceptual considerations and comparison of conventions}
\label{concept_sec}

\subsection{Topologizing determinant line bundles}
\label{outline_subs}

\noindent
For any Banach vector spaces $X$ and $Y$, the overlap maps 
between the trivializations~$\wt\cI_{D,T}$ of ${\det}_{X,Y}$
in~\eref{surjtriv_e} are continuous.
Thus, the trivializations~$\wt\cI_{D,T}$ topologize ${\det}_{X,Y}\big|_{\cF^*(X,Y)}$
as a line bundle over $\cF^*(X,Y)$, as required by the Normalization~I property
on page~\pageref{NormalI_prop}.
By Lemma~\ref{surET_lmm}, the resulting topology is compatible with
the Normalization~II property on page~\pageref{NormalII_prop}.\\ 

\noindent
For any Banach vector space $X$ and $N\!\in\!\Z^{\ge0}$, let 
$\io_{X;N}\!:X\!\lra\!X\!\oplus\!\R^N$ be the natural inclusion.
If $Y$ is another Banach vector space, $D\!\in\!\cF(X,Y)$, and 
$\Th\!:\R^N\!\lra\!Y$ is any homomorphism, define
$$\io_{\Th}\!:\cF(X,Y)\lra\cF(X\!\oplus\!\R^N,Y) \qquad\hbox{by}\quad
\io_{\Th}(D)=D_{\Th},~~D_{\Th}(x,u)=Dx+\Th(u);$$
the map $\io_{\Th}$ is an embedding.
The exact triple 
\BE{cIhatdiag_e}\begin{split}
\xymatrix{ 0\ar[r]& X\ar[d]^D\ar[r]^<<<<<{\io_{X;N}}& 
X\oplus\R^N \ar[d]^{D_{\Th}}\ar[r]^<<<<{\pi_2}&  \R^N \ar[d]\ar[r] & 0\\
0\ar[r]& Y \ar[r]^{\id_Y}& Y\ar[r]& 0\ar[r]& 0}
\end{split}\EE
and \eref{ETisom_e} give rise to the isomorphism
\BE{cIThD_e2}\hat\cI_{\Th;D}\!: \la(D)\lra \la(D_{\Th}), \qquad
\hat\cI_{\Th;D}(\si)=
\Psi_{\ft}\big(\si\otimes \OmN\!\otimes\!1^*\big),\EE
where $\OmN$ is the standard volume tensor on~$\R^N$, i.e.
$$\OmN= e_1\w\ldots\w e_N$$
if $e_1,\ldots,e_N$ is the standard basis for~$\R^N$.\\

\noindent
By the continuity requirement on the family of isomorphisms~$\Psi_{\ft}$ in~\eref{ETisom_e}
and the Normalization~I property, the isomorphisms $\hat\cI_{\Th;D}$ topologize 
${\det}_{X,Y}$ over the open subset
$$U_{X;\Th}\equiv\big\{D\!\in\!\cF(X,Y)\!:\,\fc(D_{\Th})=0\big\}.$$
Since these open subsets cover $\cF(X,Y)$ as $\Th$ ranges over all 
homomorphisms $\R^N\!\lra\!Y$ and $N$ ranges over all nonnegative integers,
the isomorphisms $\hat\cI_{\Th;D}$ completely specify the topology on~${\det}_{X,Y}$.
However, the overlap map
$$\label{cITh2Th1D_e}
\hat\cI_{\Th_2;D}\!\circ\!\hat\cI_{\Th_1;D}^{-1}\!:
\io_{\Th_1}^*{\det}_{X\oplus\R^{N_1},Y}\lra \io_{\Th_2}^*{\det}_{X\oplus \R^{N_2},Y}$$
must be continuous over $U_{X;\Th_1}\!\cap\!U_{X;\Th_2}$ for any pair of
homomorphisms $\Th_1\!:\R^{N_1}\!\lra\!Y$ and $\Th_2\!:\R^{N_2}\!\lra\!Y$.
By Proposition~\ref{overlap_prp}, this is indeed the case 
if the collection of isomorphisms~$\Psi_{\ft}$ satisfies 
the Normalization~II,III and Compositions~I,II properties. 
By Corollary~\ref{ETcont_crl}, every isomorphism in such a collection is
continuous in the resulting topologies if this collection also 
satisfies the Naturality~II property.
A collection of isomorphisms~$\Psi_{\ft}$ satisfying all these properties
is provided by~\eref{cUDDdfn_e}.
All other such collections are described by Theorem~\ref{classify_thm}
on page~\pageref{classify_thm}.\\

\noindent
For $D\!\in\!U_{X;\Th}$, the exact triple~\eref{cIhatdiag_e} induces an exact sequence
$$0\lra \ka(D)\lra\ka(D_{\Th})\lra \R^N\lra\fc(D)\lra0$$
of vector spaces.
A homomorphism $\de:V\!\lra\!W$ between finite-dimensional vector spaces also 
induces an exact sequence
$$0\lra \ka(\de)\lra V\stackrel{\de}{\lra} W\lra\fc(\de)\lra0$$
of vector spaces.
There is an isomorphism
\BE{detisom_e} \cI_{\de}\!:\la(\de)\lra \la({\bf 0})\equiv\la(V)\otimes\la^*(W).\EE
As suggested in \cite{Quillen}, 
a suitable collection of these isomorphisms is fundamental to constructing
a system of determinant line bundles for Fredholm operators.
Unfortunately, \cite{Quillen} makes no mention of what properties 
of a system of isomorphisms~\eref{detisom_e} are needed for such a construction
and gives no explicit formula for these isomorphisms.
The discussion in~\cite{Quillen} is also limited to Cauchy-Riemann operators on
Riemann surfaces.\\

\noindent
In the convention~\eref{DLdfn_e}, 
which is also used in \cite[Section~20.2]{KrMr} and \cite[Section~7.4]{MW},
the exact triple isomorphisms~\eref{cUDDdfn_e} correspond to the isomorphisms~\eref{detisom_e} 
given~by
\begin{gather}\label{detisom_e2}
\cI_{\de}\!:\la(\de)\lra \la({\bf 0}), \qquad 
x\otimes y^*\lra (-1)^{(\fd(W)-\fd(\fc(\de)))\fd(\fc(\de))}
(x\!\w_V\!v)\otimes \big(\la(\de)v\w_Wy\big)^*,\\
\qquad \forall~x\in\la\big(\ka(\de)\big)-0,~
y\in\la\big(\fc(\de)\big)-0,~v\in\la\bigg(\frac{V}{\ka(\de)}\bigg)-0,\notag 
\end{gather}
where $\w_V$ and $\w_W$ are the isomorphisms of Lemma~\ref{ses_lmm}; see Remark~\ref{4term_rmk}.
This is precisely the isomorphism of \cite[Lemma~7.4.7]{MW}
and is used directly to topologize determinant line bundles in the proof of 
\cite[Proposition~7.4.8]{MW}.\footnote{As shown in the proof of Proposition~\ref{overlap_prp}
in this paper, the restriction to injective homomorphisms~$\Th$ in 
the proof of \cite[Proposition~7.4.8]{MW} is unnecessary.}
While the properties of~\eref{detisom_e2} necessary for this construction
are verified in~\cite{MW}, few of the important properties of 
the resulting determinant line bundles are checked in~\cite{MW}.
The isomorphism~\eref{detisom_e2} appears only indirectly in the construction
of this paper.\\

\noindent
There are alternative ways of constructing a system of determinant line bundles 
satisfying the properties in Subsection~\ref{detLBprop_subs}.
\begin{enumerate}[label=(\arabic*),leftmargin=*]

\item A system of determinant line bundles for bounded complexes of vector bundles 
and isomorphisms for exact triples of such complexes is constructed in 
\cite[Chapter~I]{KM}.
A system of determinant line bundles for Fredholm operators can then be obtained
by associating each Fredholm operator with a two-term complex, 
deducing the Exact Squares property for Fredholm operators from that for 
bounded complexes and the two algebraic Compositions properties from the Exact Squares
property, and deriving explicit formulas for all isomorphisms.
This approach is described in detail in Section~\ref{KM_subs}.

\item One could explicitly specify a collection of isomorphisms~$\hat\cI_{\Th;D}$ 
as in~\eref{cIThD_e2} that are compatible with compositions.
This is essentially the approach taken in~\cite{MS}, \cite{MW}, \cite{Salamon},
and \cite{Seidel} to topologize determinant line bundles,
without verifying the properties in Subsection~\ref{detLBprop_subs}. 
The isomorphisms~\eref{cIThD_e2} can be used to define Exact Triples 
isomorphisms~\eref{ETisom_e} from the Normalization~II property, 
imposing the commutativity property of Lemma~\ref{StabTrip_lmm} by definition,
and to derive an explicit formula for these isomorphisms.
The Exact Squares property for Fredholm operators can then be obtained
from the basic Exact Squares property of Lemma~\ref{ses_lmm2}
as in the proof of Corollary~\ref{ExSQ_crl} and used to confirm
the two algebraic Compositions properties.

\item The commutativity property of Lemma~\ref{StabTrip_lmm} could be verified
for the isomorphism~\eref{cUDDdfn_e} directly, without using Proposition~\ref{CompET_prp2},
and used to obtain the Exact Squares property as in the proof of Corollary~\ref{ExSQ_crl}.
The two algebraic Compositions properties 
could then be deduced either from the Exact Squares property
 or from the corresponding properties for vector spaces
by an argument similar to the proof of Corollary~\ref{ExSQ_crl}.
Unfortunately, the proof of the special case of Proposition~\ref{CompET_prp2}
corresponding to Lemma~\ref{StabTrip_lmm} is as elaborate as the proof of 
Proposition~\ref{CompET_prp2} itself;
the former involves a bit less notation, but exactly the same steps.

\end{enumerate}

\vspace{.15in}

\noindent
In all three approaches, the Dual Exact Triples property can be either checked directly 
or deduced from more general  considerations.
The above listed alternatives can be used to replace parts of Section~\ref{lin_alg}
in this paper,
but most of Section~\ref{mainthmpf_sec} would still be needed. 
It appears the overall approach of this paper is more efficient than
the three alternatives described above.\\

\noindent
The equivalence of the topologies arising from the algebraic approach of~\cite{KM}
and the analytic approach of~\cite{Quillen} in many complex-geometric settings
is shown in the trilogy \cite{BGS1,BGS2,BGS3};
see \cite[Theorem~0.1]{BGS1} in particular. 
Combined with earlier work \cite{GS1,GS2}, this trilogy leads to
an arithmetic version of the Grothendick-Riemann-Roch Theorem; see \cite{GS3}.
A thorough discussion of the determinant line bundle in Arakelov geometry,
which is outside of the scope of this paper,
is contained in the books \cite{Faltings,SouleBook}.

\subsection{Relation with Knudsen-Mumford}
\label{KM_subs}

\noindent
The existence of a determinant line bundle system
satisfying the properties in Subsection~\ref{detLBprop_subs} 
follows most readily (but still with some work) from the proof of \cite[Theorem~1]{KM}, 
which constructs determinant line bundles for bounded complexes of vector bundles.
Unfortunately, a complete construction of a determinant line bundle based on~\cite{KM}
with a verification of all of the properties listed in Subsection~\ref{detLBprop_subs} and
with explicit formulas for the relevant isomorphisms does not seem to appear elsewhere;
we describe it below.\\

\noindent
For each homomorphism $\Th\!:\R^N\!\lra\!Y$, 
$$\cK_{\Th}\equiv \big\{(D,x,u)\!\in\!U_{X;\Th}\times X\!\oplus\!\R^N\!:\,
(x,u)\!\in\!\ka(D_{\Th})\big\}\lra U_{X;\Th}$$
is a vector bundle.
For each $D\!\in\!U_{X;\Th}$, the commutative diagram~\eref{cIhatdiag_e} gives rise to
an exact sequence
\BE{KMseq_e}0\lra \ka(D)\lra \ka(D_{\Th})\stackrel{\de_{\Th}}{\lra} 
\R^N\stackrel{\Th}{\lra} \fc(D)\lra0.\EE
Thus, each homomorphism $\Th\!:\R^N\!\lra\!Y$ determines a two-term graded complex
\BE{VBcomp_e}
\ldots\lra0\lra\cK_{\Th}\stackrel{\de_{\Th}}{\lra} U_{X;\Th}\!\times\!\R^N\lra0\lra\ldots
\EE
of vector bundles over $U_{X;\Th}$, with $\cK_{\Th}$ placed at the 0-th position,
and a $\Z_2$-graded line bundle
$$\cL_{\Th}\equiv\la(\cK_{\Th})\otimes\la^*(U_{X;\Th}\!\times\!\R^N),$$
\textsf{the determinant line bundle of the two-term complex \eref{VBcomp_e}}.\\

\noindent
For each $D\!\in\!U_{X;\Th}$, let $\Xi\!:\fc(D)\!\lra\!\R^N$ be a right inverse 
for the surjective map 
\BE{ThfcD_e}\R^N\lra \fc(D), \qquad u\lra\Th(u)+\Im\,D.\EE
The diagram
$$\xymatrix{\ldots\ar[r]& 0 \ar[d]\ar[r]& 
\ka(D) \ar[d]^{\fI_{D;X}}\ar[r]^>>>>>>0&
\fc(D) \ar[d]^{\Xi}\ar[r]&  0 \ar[d]\ar[r]&\ldots& \fI_{D;X}(x)=(D,x,0)\\
\ldots\ar[r]& 0 \ar[r]& \cK_{\Th}|_D \ar[r]^>>>>>>{\de_{\Th}}&
\{D\}\!\times\!\R^N \ar[r]&  0 \ar[r]&\ldots}$$
is then a \textsf{quasi-isomorphism of graded complexes} over~$\{D\}$, 
i.e.~a homomorphism of graded complexes of vector bundles that 
induces an isomorphism in homology.
By \cite[Theorem~1]{KM}, there is then a canonical isomorphism
$$\hat\cI_{\Th;D}'\!: \la(D)\lra  \cL_{\Th}|_D\approx\la(D_{\Th}).$$
Since any other right inverse for the homomorphism~\eref{ThfcD_e} is of the form
$\Xi+\de_{\Th}\ti\Xi$ for some homomorphism $\ti\Xi\!:\fc(D)\!\lra\!\ka(D_{\Th})$,
$\hat\cI_{\Th;D}'$ is independent of the choice of~$\Xi$ by \cite[Proposition~2]{KM}.
If $\Th'\!:\R^{N'}\!\lra\!Y$ is another homomorphism and $\io\!:\R^N\!\lra\!\R^{N'}$
is a homomorphism such that $\Th\!=\!\Th'\!\circ\!\io$,
$$\xymatrix{\ldots\ar[r]& 0 \ar[d]\ar[r]& 
\cK_{\Th} \ar[d]|{\id\times\id\times\io}\ar[r]^>>>>>>{\de_{\Th}}&
U_{X;\Th}\!\times\!\R^N \ar[d]|{\id\times\io}\ar[r]&  0 \ar[d]\ar[r]&\ldots\\
\ldots\ar[r]& 0 \ar[r]& \cK_{\Th'} \ar[r]^>>>>>>{\de_{\Th'}}&
U_{X;\Th}\!\times\!\R^{N'} \ar[r]&  0 \ar[r]&\ldots}$$
is also a quasi-isomorphism of graded complexes.
By the proof of \cite[Theorem~1]{KM}, it also induces a canonical isomorphism
$$\cI_{\Th',\Th}\!: \cL_{\Th}\lra\cL_{\Th'}$$
of line bundles over $U_{X;\Th}$. 
By the functoriality of the determinant construction of \cite[Theorem~1]{KM},
$$\hat\cI_{\Th';D}'=\cI_{\Th',\Th}\!\circ\!\hat\cI_{\Th;D}'\!: 
\la(D)\lra  \cL_{\Th'}|_D\approx\la(D_{\Th'}).$$
Since the line bundle maps $\cI_{\Th',\Th}$ are continuous, 
the isomorphisms $\hat\cI_{\Th;D}'$ topologize ${\det}_{X,Y}$ over $U_{X;\Th}$
and endow ${\det}_{X,Y}$ with a well-defined topology of a line bundle over~$\cF(X,Y)$,
which satisfies the Normalization~I and Naturality~I properties.\\

\noindent
The proof of \cite[Theorem~1]{KM} produces analogues of the isomorphisms~\eref{ETisom_e}
for exact triples of graded complexes~\eref{VBcomp_e} of vector bundles.
These isomorphisms satisfy analogues of the Normalization~II,III, Naturality~II,
and Exact Squares properties.
By the proof of Corollary~\ref{ETcont_crl}, an exact triple of Fredholm operators gives
rise to an exact triple of two-term complexes (over a point).
By the analogue of the Naturality~II property for two-term complexes, 
the isomorphisms of \cite[Theorem~1]{KM} 
then induce via the isomorphisms~$\hat\cI_{\Th;D}'$ isomorphisms~$\Psi_{\ft}$ for exact triples of 
Fredholm operators which satisfy the Normalization~II,III and Naturality~II properties.
These isomorphisms depend continuously on~$\ft$ by the proofs of Lemma~\ref{surET_lmm}
and Corollary~\ref{ETcont_crl}.
By the proof of Corollary~\ref{ExSQ_crl}, an exact square of Fredholm operators as 
in~\eref{SQexact_e1} gives rise to an exact square of two-term complexes.
By the analogue of the Exact Squares property for two-term complexes and 
the proof of Corollary~\ref{ExSQ_crl}, 
the induced isomorphisms for exact triples of Fredholm operators satisfy 
the Exact Squares property for Fredholm operators.
The proof of \cite[Theorem~1]{KM} implies the existence of 
the bundle maps~$\wt\cD_D$ as in~\eref{wtcD_e} satisfying the analogue of 
the Dual Exact Triples property on page~\pageref{compdual_prop} for two-term complexes.
These bundle maps~$\wt\cD_D$ satisfy the analogue of the Normalization~IV$^{\star}$
property on page~\pageref{NormalIVst_prop} in the case of the system explicitly
constructed in the proof of \cite[Theorem~1]{KM}; this can be seen
from the last paragraph of this section and Section~\ref{classify_subs}.\\

\noindent
We show below that the Compositions~I,II properties on page~\pageref{composI_prop}
follow from the isomorphism Naturality~II, Normalization~III, and Exact Squares properties.
By Subsection~\ref{classify_subs}, the full Naturality~II property follows from
the isomorphism Naturality~II, Normalization~II,III, and Exact Squares properties.
Thus, \cite[Theorem~1]{KM} gives rise to a determinant line bundle 
system satisfying all properties in Subsection~\ref{detLBprop_subs},
with the possible exceptions of the Complex Orientations, Complex Exact Triples,
and Dual Complex Orientations properties.

\begin{proof}[{\bf\emph{Exact Squares and Normalization~III imply Compositions~II}}]
Let~$\ft_1$ and~$\ft_2$ be exact triples as in~\eref{compdiag_e2}.
For  $\star\!=\!',''$ or blank, let
$$\Psi^{\star}\!:\la(D_1^{\star})\!\otimes\!\la(D_2^{\star})
\lra \la\big(D_2^{\star}\!\circ\!D_1^{\star}\!\oplus\!\id_{X_2^{\star}}\big)$$ 
be the isomorphism~\eref{ETisom_e} 
for the exact triple~\eref{compEX_e} corresponding to the composition 
$D_2^{\star}\!\circ\!D_1^{\star}$.
For \hbox{$i\!=\!1,3$}, define
$$\io_{X_i^{\star};X_2^{\star}}\!:X_i^{\star}\lra X_i^{\star}\!\oplus\!X_2^{\star}, 
\qquad \io_{X_i^{\star};X_2^{\star}}(x)=(x,0).$$
Thus, $(\io_{X_1^{\star};X_2^{\star}},\io_{X_3^{\star};X_2^{\star}})$ is 
a quasi-isomorphism from $D_2^{\star}\!\circ\!D_1^{\star}$ to
$D_2^{\star}\!\circ\!D_1^{\star}\!\oplus\!\id_{X_2^{\star}}$.
Let
$$\cI^{\star}\!=\!
\wt\cI_{\io_{X_1^{\star};X_2^{\star}},\io_{X_3^{\star};X_2^{\star}};
D_2^{\star}\circ D_1^{\star}}\!:\la(D_2^{\star}\!\circ\!D_1^{\star})\lra 
\la\big(D_2^{\star}\!\circ\!D_1^{\star}\!\oplus\!\id_{X_2^{\star}}\big)$$
be the corresponding isomorphism~\eref{cIphipsi_e}.
In the top diagram of Figure~\ref{KM_fig2},
the rows are the exact triple~$\ft_1$, 
the direct sum of~$\cC_{\cT}(\ft_1,\ft_2)$ with 
$$0\lra\id_{X_2'}\lra\id_{X_2}\lra\id_{X_2''}\lra0,$$
and the exact triple~$\ft_2$.
The columns in this commutative square of Fredholm operators are
the exact triples~\eref{compEX_e} corresponding to the compositions 
$D_2'\!\circ\!D_1'$, $D_2\!\circ\!D_1$, and~$D_2''\!\circ\!D_1''$.\\

\begin{figure}
\begin{gather*}
\xymatrix{& 0\ar[d] & 0\ar[d] & 0\ar[d] &\\ 
0\ar[r]& D_1' \ar[r]\ar[d]& D_1\ar[r]\ar[d]& D_1''\ar[r]\ar[d]& 0\\
0\ar[r]& D_2'\!\circ\!D_1'\oplus\id_{X_2'}\ar[r]\ar[d]& 
D_2\!\circ\!D_1\oplus\id_{X_2} \ar[r]\ar[d]&
D_2''\!\circ\!D_1''\oplus\id_{X_2''} \ar[r]\ar[d]& 0\\
0\ar[r]& D_2' \ar[r]\ar[d]& D_2\ar[r]\ar[d]& D_2''\ar[r]\ar[d]& 0\\
&0&0&0& \\
& 0\ar[d] & 0\ar[d] & 0\ar[d] &\\ 
0\ar[r]& D_2'\!\circ\!D_1' \ar[r]\ar[d]& D_2\!\circ\!D_1\ar[r]\ar[d]& 
D_2''\!\circ\!D_1''\ar[r]\ar[d]& 0\\
0\ar[r]& D_2'\!\circ\!D_1'\oplus\id_{X_2'}\ar[r]\ar[d]& 
D_2\!\circ\!D_1\oplus\id_{X_2} \ar[r]\ar[d]&
D_2''\!\circ\!D_1''\oplus\id_{X_2''} \ar[r]\ar[d]& 0\\
0\ar[r]& \id_{X_2'} \ar[r]\ar[d]& \id_{X_2}\ar[r]\ar[d]& \id_{X_2''}\ar[r]\ar[d]& 0\\
&0&0&0&}
\end{gather*}
\begin{gather*}
\xymatrix{\la(D_1')\!\otimes\!\la(D_1'')\!\otimes\!\la(D_2')\!\otimes\!\la(D_2'')
\ar[rr]^>>>>>>>>>>>>>>{\Psi_{\ft_1}\otimes\Psi_{\ft_2}}
\ar[d]|{\Psi'\!\otimes\!\Psi''\!\circ\id\otimes R\otimes\id} \ar@/_9pc/[dd]&& 
\la(D_1)\otimes\la(D_2)\ar[d]^{\Psi}\ar@/^6pc/[dd]|{\wt\cC_{D_1,D_2}}\\
\la(D_2'\!\circ\!D_1'\!\oplus\!\id_{X_2'})\otimes
\la(D_2''\!\circ\!D_1''\!\oplus\!\id_{X_2''})\ar[rr]&& \la(D_2\!\circ\!D_1\!\oplus\!\id_{X_2})\\ 
\la(D_2'\!\circ\!D_1')\!\otimes\!\la(D_2''\!\circ\!D_1'')\ar[u]|{\cI'\otimes\cI''} 
\ar[rr]^>>>>>>>>>>>>>>>>>>{\Psi_{\cC_{\cT}(\ft_1,\ft_2)}}
&&\la(D_2\!\circ\!D_1)\ar[u]_{\cI}}
\end{gather*}
\caption{Derivation of the Compositions~II property
from the Exact Squares and Normalization~III properties}
\label{KM_fig2}
\end{figure}

\noindent
Applying the Exact Squares property to this diagram,
we obtain the top commutative square in the last diagram in Figure~\ref{KM_fig2}.
Applying the Exact Squares and Normalization~III properties to the center 
diagram in Figure~\ref{KM_fig2} and using the identification 
\BE{laext_e}\la(D_2^{\star}\!\circ\!D_1^{\star})\lra 
\la(D_2^{\star}\!\circ\!D_1^{\star})\!\otimes\!\la(\id_{X_2^{\star}})\,,\qquad
\si\lra \si\otimes1\!\otimes\!1^*\,,\EE
with $\star\!=\!',''$ or blank, 
we obtain the bottom commutative square in this diagram.
The two round arrows are the vertical arrows in~\eref{compdiag_e2};
the two half-disk diagrams commute by the definition 
of~$\wt\cC_{D_1^{\star},D_2^{\star}}$.
Thus, the diagram~\eref{compdiag_e2}, which consists of the outermost arrows
in the last diagram in Figure~\ref{KM_fig2}, commutes.
\end{proof}

\noindent
{\bf\emph{Exact Squares, Normalization~III, and isomorphism Naturality~II imply 
Compositions~I.}}
Let $D_1,D_2,D_3$ be Fredholm operators as in~\eref{compdiag_e1}.
Let
\begin{alignat*}{2}
\Psi_{1,2}\!:\la(D_1)\!\otimes\!\la(D_2)&\lra\la(D_2\!\circ\!D_1\!\oplus\!\id_{X_2}),
&~~
\Psi_{1,23}\!:\la(D_1)\!\otimes\!\la(D_3\!\circ\!D_2)
&\lra\la(D_3\!\circ\!D_2\!\circ\!D_1\oplus\!\id_{X_2}),\\
\Psi_{2,3}\!:\la(D_2)\!\otimes\!\la(D_3) &\lra\la(D_2\!\circ\!D_3\!\oplus\!\id_{X_3}),
&~~
\Psi_{12,3}\!:\la(D_2\!\circ\!D_1)\!\otimes\!\la(D_3)
&\lra\la(D_3\!\circ\!D_2\!\circ\!D_1\oplus\!\id_{X_3})
\end{alignat*}
be the isomorphisms~\eref{ETisom_e} for the exact triple~\eref{compEX_e} 
corresponding to the compositions  $D_2\!\circ\!D_1$, $(D_3\!\circ\!D_2)\!\circ\!D_1$,
$D_3\!\circ\!D_2$, and $D_3\!\circ\!(D_2\!\circ\!D_1)$.
We define isomorphisms
\begin{alignat*}{2}
\cI_{1,2}\!:\la(D_2\!\circ\!D_1)&\lra 
\la\big(D_2\!\circ\!D_1\!\oplus\!\id_{X_2}\big), 
&\quad
\cI_{1,23}\!:\la(D_3\!\circ\!D_2\!\circ\!D_1)&\lra 
\la\big(D_3\!\circ\!D_2\!\circ\!D_1\!\oplus\!\id_{X_2}\big), \\
\cI_{2,3}\!:\la(D_3\!\circ\!D_2)&\lra 
\la\big(D_3\!\circ\!D_2\!\oplus\!\id_{X_3}\big), 
&\quad
\cI_{12,3}\!:\la(D_3\!\circ\!D_2\!\circ\!D_1)&\lra 
\la\big(D_3\!\circ\!D_2\!\circ\!D_1\!\oplus\!\id_{X_3}\big)
\end{alignat*}
analogously to $\cI^{\star}$ above.\\

\noindent
The left column in the top diagram of Figure~\ref{KM_fig1},
the bottom row in this diagram, and the top row 
in the middle diagram of Figure~\ref{KM_fig1}
are the exact triples~\eref{compEX_e} corresponding to the compositions 
$D_2\!\circ\!D_1$, $D_3\!\circ\!D_2$, and $D_3\!\circ\!(D_2\!\circ\!D_1)$,
respectively.
The middle rows in these two figures are the direct sum of the exact triple~\eref{compEX_e} 
corresponding to the composition $D_3\!\circ\!(D_2\!\circ\!D_1)$ with the exact triple
$$0\lra\id_{X_2}\lra \id_{X_2}\lra0\lra0.$$
The homomorphisms $\fI\!\equiv\!(\fI_X,\fI_Y)$ and $\fI\!\equiv\!(\fJ_X,\fJ_Y)$
in the center column of the top diagram of Figure~\ref{KM_fig1} are given~by
\begin{alignat*}{2}
\fI_X(x_1)&=(x_1,D_1x_1,D_2D_1x_1), &\qquad 
\fJ_X(x_1,x_2,x_3)&=(D_1x_1\!-\!x_2,x_3\!-\!D_2x_2),\\
\fI_Y(x_2)&=(D_3D_2x_2,x_2,D_2x_2),&\qquad 
\fJ_Y(x_4,x_2,x_3)&=(x_4\!-\!D_3D_2x_2,x_3\!-\!D_2x_2).
\end{alignat*}
The left and center columns in the middle and bottom diagrams of this figure
and the top and middle rows in the bottom diagram are the direct sums 
of the obvious exact triples.\\

\begin{figure}
\begin{gather*}
\xymatrix{ & 0\ar[d] & 0\ar[d] & 0\ar[d] &\\ 
0\ar[r]& D_1 \ar[d]\ar[r]^{\id}&  D_1\ar[d]^{\fI}\ar[r]& {\bf0}\ar[d]\ar[r]& 0\\
0\ar[r]& D_2\!\circ\!D_1\oplus\id_{X_2}  \ar[r]\ar[d]& 
D_3\!\circ\!D_2\!\circ\!D_1\oplus\id_{X_2}\!\oplus\!\id_{X_3} \ar[r]\ar[d]^{\fJ}& 
 D_3\ar[r]\ar[d]^{\id}& 0\\
0\ar[r]& D_2 \ar[r]\ar[d]& D_3\!\circ\!D_2\oplus\id_{X_3} \ar[r]\ar[d]& 
D_3\ar[r]\ar[d]& 0\\
&0&0&0& \\
& 0\ar[d] & 0\ar[d] & 0\ar[d] &\\ 
0\ar[r]& D_2\!\circ\!D_1 \ar[d]\ar[r]&  
D_3\!\circ\!D_2\!\circ\!D_1\oplus\id_{X_3}\ar[d]\ar[r]& D_3\ar[d]^{\id}\ar[r]& 0\\
0\ar[r]& D_2\!\circ\!D_1\oplus\id_{X_2}  \ar[r]\ar[d]& 
D_3\!\circ\!D_2\!\circ\!D_1\oplus\id_{X_2}\!\oplus\!\id_{X_3} \ar[r]\ar[d]& 
 D_3\ar[r]\ar[d]& 0\\
0\ar[r]& \id_{X_2} \ar[r]^{\id}\ar[d]& \id_{X_2} \ar[r]\ar[d]& {\bf0}\ar[r]\ar[d]& 0\\
&0&0&0&\\
& 0\ar[d] & 0\ar[d] & 0\ar[d] &\\ 
0\ar[r]& D_3\!\circ\!D_2\!\circ\!D_1 \ar[d]\ar[r]&  
D_3\!\circ\!D_2\!\circ\!D_1\oplus\id_{X_3}\ar[d]\ar[r]& \id_{X_3}\ar[d]^{\id}\ar[r]& 0\\
0\ar[r]& D_3\!\circ\!D_2\!\circ\!D_1\oplus\id_{X_2}  \ar[r]\ar[d]& 
D_3\!\circ\!D_2\!\circ\!D_1\oplus\id_{X_2}\!\oplus\!\id_{X_3} \ar[r]\ar[d]& 
 \id_{X_3}\ar[r]\ar[d]& 0\\
0\ar[r]& \id_{X_2} \ar[r]^{\id}\ar[d]& \id_{X_2} \ar[r]\ar[d]& {\bf0}\ar[r]\ar[d]& 0\\
&0&0&0& }
\end{gather*}
\caption{Exact squares of Fredholm operators used in the derivation 
of the Compositions~I property}
\label{KM_fig1}
\end{figure}

\begin{sidewaysfigure}
\thisfloatpagestyle{empty}
\begin{gather*}
\xymatrix{\la(D_1)\!\otimes\!\la(D_2)\!\otimes\!\la(D_3) 
\ar[rr]^{\id\otimes\wt\cC_{D_2,D_3}} \ar[dr]|{\id\otimes\Psi_{2,3}} 
\ar[dd]|{\Psi_{1,2}\otimes\id} 
\ar@/_8pc/[dddd]|{\wt\cC_{D_2,D_1}\otimes\id} &&
\la(D_1)\otimes\la(D_3\!\circ\!D_2)\ar[dl]|{\id\otimes\cI_{2,3}}
\ar[dd]^{\Psi_{1,23}} \ar@/^8pc/[dddd]|{\wt\cC_{D_1,D_3\circ D_2}}\\
& \la(D_1)\!\otimes\!\la(D_3\!\circ\!D_2\!\oplus\!\id_{X_3})
\ar@/_1pc/[d]_{\Psi_{\tnM}} \ar@/^1pc/[d]^{\wt\Psi_{1,23}} &\\
\la(D_2\!\circ\!D_1\!\oplus\!\id_{X_2})\!\otimes\!\la(D_3)\ar[r]^>>>>>{\wt\Psi_{12,3}}&
\la\big(D_3\!\circ\!D_2\!\circ\!D_1\!\oplus\!\id_{X_2}\!\oplus\!\id_{X_3}\big)&
\la\big(D_3\!\circ\!D_2\!\circ\!D_1\!\oplus\!\id_{X_2}\big)\ar[l]_>>>>>>{\wt\cI_{12,3}}\\
& \la\big(D_3\!\circ\!D_2\!\circ\!D_1\!\oplus\!\id_{X_3}\big)\ar[u]^{\wt\cI_{1,23}} &\\
\la(D_2\!\circ\!D_1)\!\otimes\!\la(D_3) \ar[rr]^{\wt\cC_{D_2\circ D_1,D_3}} 
\ar[ur]|{\Psi_{12,3}}\ar[uu]|{\cI_{1,2}\otimes\id}&& 
\la(D_3\!\circ\!D_2\!\circ\!D_1)\ar[ul]|{\cI_{12,3}}\ar[uu]_{\cI_{1,23}}}
\end{gather*}
\caption{Commutative diagram used in the derivation of the Compositions~I 
property from the Exact Squares, Normalization~III, and isomorphism Naturality~II properties }
\label{ESQcomp1_fig}
\end{sidewaysfigure}

\noindent
Applying the Exact Squares and Normalization~III properties to 
the top exact square in Figure~\ref{KM_fig1}
and using identifications similar to~\eref{laext_e},
we find that the top left quadrilateral in Figure~\ref{ESQcomp1_fig},
where~$\wt\Psi_{12,3}$ and~$\Psi_{\tnM}$ are the isomorphisms~\eref{ETisom_e} 
corresponding to the middle row and the center column in this square, commutes.
The commuting bottom left quadrilateral in Figure~\ref{ESQcomp1_fig}, 
where $\wt\cI_{1,23}$ is the isomorphism~\eref{ETisom_e} 
corresponding to the center column in the second diagram of Figure~\ref{KM_fig1},
is obtained from this diagram by applying the Exact Squares and Normalization~III properties
as~well.
A similar exact square gives the commuting top right quadrilateral in Figure~\ref{ESQcomp1_fig},
where~$\wt\Psi_{1,23}$ and~$\wt\cI_{12,3}$ are the isomorphisms~\eref{ETisom_e} corresponding to 
the direct sum of the exact triple~\eref{compEX_e} for the composition 
$(D_3\!\circ\!D_2)\!\circ\!D_1$  with the exact triple
$$0\lra0\lra\id_{X_3}\lra \id_{X_3}\lra0$$
and to the middle row in the last diagram in Figure~\ref{KM_fig1}, respectively.
The bottom right quadrilateral in Figure~\ref{ESQcomp1_fig} arises from the last diagram in 
Figure~\ref{KM_fig1}.\\

\noindent
The two arrows that run between the same objects in the middle of Figure~\ref{ESQcomp1_fig}
are related by the isomorphism of exact triples of Fredholm operators,
$$\xymatrix{0\ar[r]& D_1\ar[r]\ar[d]^{\id}& 
D_3\!\circ\!D_2\!\circ\!D_1\oplus\id_{X_2}\!\oplus\!\id_{X_3}\ar[r]\ar[d]|{(\phi,\psi)}& 
D_3\!\circ\!D_2\!\oplus\!\id_{X_3} \ar[r]\ar[d]^{\id}&0\\
0\ar[r]& D_1\ar[r]^>>>>>>{\fI}& 
D_3\!\circ\!D_2\!\circ\!D_1\oplus\id_{X_2}\!\oplus\!\id_{X_3}\ar[r]^>>>>>>{\fJ}& 
D_3\!\circ\!D_2\!\oplus\!\id_{X_3} \ar[r]&0\,,}$$
where the top row is the exact triple~\eref{compEX_e} corresponding
to the composition $(D_3\!\circ\!D_2)\!\circ\!D_1$ augmented by~$\id_{X_3}$, 
$$\phi(x_1,x_2,x_3)=(x_1,x_2,x_3\!+\!D_2x_2),\qquad
\psi(x_4,x_2,x_3)=(x_4,x_2,x_3\!+\!D_2x_2).$$
Since $\wt\cI_{\phi,\psi;D_3\circ D_2\circ D_1\oplus\id_{X_2}\oplus\id_{X_3}}\!=\!\id$, 
these two arrows are in fact the same
by the isomorphism Naturality~II property.
The two half-disk and two triangular diagrams in Figure~\ref{ESQcomp1_fig}
commute by the definition of~$\wt\cC$.
Thus, the diagram~\eref{compdiag_e1}, which consists of the outermost arrows
of the diagram in Figure~\ref{ESQcomp1_fig}, commutes.
\qed\\

\noindent
The determinant for a complex of vector bundles in \cite[p31]{KM}
corresponds to reversing the two factors in~\eref{DLdfn_e} and thus interchanges
the roles of the kernels and cokernels of linear operators.
The isomorphism~\eref{detisom_e2} should then be replaced~by
\BE{detisom_e3}\begin{split}
\la^*\big(\fc(\de)\big)\!\otimes\!\la\big(\ka(\de)\big) &\lra \la^*(W)\!\otimes\!\la(V),\\ 
y^*\!\otimes\!x&\lra (-1)^{(\fd(V)-\fd(\ka(\de)))\fd(\ka(\de))}
\big(\la(\de)v\w_W\!y\big)^*\otimes x\!\w_V\!v,
\end{split}\EE
with $x,y,v$ as before.
This isomorphism differs from the isomorphism~\eref{detisom_e2} conjugated by the
isomorphisms~\eref{Risom_e} by $(-1)$ to the power of $\fd(\Im\,\de)$,
which equals $N\!-\!\fd(\fc(D))$ in the case of~\eref{KMseq_e}. 
The dependence on~$\fd(\fc(D))$ drops out when taking the overlap maps,
analogous to~$\hat\cI_{\Th_2;D}\!\circ\!\hat\cI_{\Th_1;D}^{-1}$
on page~\pageref{cITh2Th1D_e}, 
for the trivializations of the new version of the determinant line bundle,
and so the isomorphisms~\eref{detisom_e3} still give rise to a well-defined topology
on this bundle.
The two versions of the determinant line bundle are isomorphic
by the maps~\eref{Risom_e} composed with 
the multiplication by $A_{\ind D,\fd(\fc(D))}\!\equiv\!(-1)^{\fc(D)}$ 
in the fiber over $D\!\in\!\cF(X,Y)$.
Neither of the last two maps is continuous, but the composite is continuous; 
see Section~\ref{classify_subs} for a systematic discussion of such isomorphisms. 
The isomorphism~$\Psi_{\ft}$ for exact triples of Fredholm operators
described by~\eref{cUDDdfn_e}
for the topology on ${\det}_{X,Y}$ specified by~\eref{detisom_e2}
should then be conjugated by the above isomorphism between the two versions
of the determinant line bundle.
In particular, this changes the sign exponent in \eref{cUDDdfn_e1}
to~$(\ind\,D')\fd(\fc(D''))$, in addition to interchanging the kernel and 
cokernel factors.

\subsection{Other conventions}
\label{othset_subs}

\noindent
In \cite[Section~3.1]{EES}, $\la(D)$ is defined as the tensor product of 
$\la(\ka(D))$ and $\la(\fc(D)^*)$.
In \cite[Appendix~A.2]{MS}, $\la(D)$ is defined as the tensor product of 
$\la(\ka(D))$ and $\la(\ka(D^*))$.
In light of the second isomorphism in~\eref{fkfcdual_e}, 
these two conventions are essentially identical.
They implicitly identify $\la^*(\fc(D))$ with~$\la(\fc(D)^*)$.
Such an identification is determined by a pairing of  $\la(V^*)$ with $\la(V)$
for a finite-dimensional vector space~$V$.
There are two such standard pairings:
\BE{pairdfn_e}\begin{split}
\la(V^*)\otimes\la(V)\lra\R, \qquad
\al_1\!\w\!\ldots\!\w\!\al_n\otimes v_1\!\w\!\ldots\!\w\!v_n
&\lra \det\big(\al_i(v_j))_{i,j=1,\ldots,n} \quad\hbox{and}\\
&\lra (-1)^{\binom{n}{2}}\det\big(\al_i(v_j))_{i,j=1,\ldots,n}.
\end{split}\EE
Along with~\eref{detisom_e2}, these two pairings topologize the new version
of the determinant line bundle in two different ways.
The resulting line bundles are isomorphic by the multiplication
by $(-1)$ to the power of $\binom{\fd(\fc(D))}{2}$ in the fiber over $D\!\in\!\cF(X,Y)$.
Under the second pairing in~\eref{pairdfn_e}, the isomorphism~\eref{detisom_e2}
precisely corresponds  to the isomorphism~\cite[(3.1)]{EES}.
On the other hand, the analogue of~\eref{detisom_e2} used in the proof 
of \cite[Theorem~A.2.2]{MS} corresponds under the first pairing in~\eref{pairdfn_e}
to~\eref{detisom_e2} without the~sign; see \cite[Exercise~A.2.3]{MS}.
In the case of~\eref{KMseq_e}, the exponent of this sign is $(N\!-\!\fc(\fd))\fc(D)$,
which changes the overlap maps between the trivializations of the determinant line
bundle by $(-1)$ to the power of $(N'\!-\!N)\fd(\fc(D))$.
The overlap maps in the proof of \cite[Theorem~A.2.2]{MS} thus need not be continuous 
if $N\!-\!N'$ is odd and so do not topologize the determinant 
line bundles.\footnote{The 2017 revision of~\cite{MS} contains a modification
to address this issue.
There is inconsistency in the definition of~$\la(D)$ at the beginning of
the revised Appendix~A.2 in~\cite{MS} and the notation in Theorem~A.2.1ab.
Assuming the intended definition is as stated, 
Theorem~A.2.1ab defines a collection of exact triple isomorphisms~$\Psi_{\ft}$
corresponding to the collection $A_{i,c}\!\equiv\!(-1)^{ic}$ of
our Theorem~\ref{classify_thm} on page~\pageref{classify_thm} under 
the second pairing in~\eref{pairdfn_e};
this pairing is consistent with the notation in Theorem~A.2.1ab.
These isomorphisms correspond to the isomorphisms~\eref{detisom_e2}
with the sign as in~\eref{detisom_e3}.
This sign is more consistent with reversing the factors in the definition of~$\la(D)$ in~\cite{MS},
as in Theorem~A.2.1ab and in~\cite{KM}.}\\

\noindent
In \cite[Section~(11a)]{Seidel}, $\la(D)$ is defined as the tensor product of 
$\la(\fc(D)^*)$ and~$\la(\ka(D))$.
In \cite[Section~1.2]{Salamon}, $\la(D)$ is defined as the tensor product of 
$\la(\ka(D^*))$ and $\la(\ka(D))$.
In light of the second isomorphism in~\eref{fkfcdual_e},
these conventions are essentially identical.
Under the second pairing in~\eref{pairdfn_e}, the isomorphism~\eref{detisom_e3}
becomes~\cite[(11.3)]{Seidel}.
Under the same pairing, the isomorphism~\eref{detisom_e3} corresponds to
the isomorphism of \cite[Theorem~2.1]{Salamon} multiplied by $(-1)$
to the power~of 
$$\big(\fd(W)\!-\!\fd(\fc(\de))\big)(\ind\,\de)
+\fd\big(\ka(\de)\big)\fd\big(\fc(\de)\big)
\cong\fd(W)(\ind\,\de)+\fc(\de)\mod2.$$
In the case of~\eref{KMseq_e}, the sign exponent reduces to 
$N(\ind\,D)\!+\!\fd(\fc(D))$.
The dependence on $\fd(\fc(D))$ drops out when taking the overlap maps for 
the trivializations of this version of the determinant line bundle,
and so the isomorphism of \cite[Theorem~2.1]{Salamon}
gives rise to a well-defined topology on this bundle.
It is isomorphic to the determinant line bundle of \cite[Section~(11a)]{Seidel}
by the multiplication by $(-1)^{\fc(D)}$ in the fiber over $D\!\in\!\cF(X,Y)$.
The interchange of factors in~$\la(D)$ accounts for the change of the sign exponent
in the direct sum formulas, \cite[(3)]{Salamon} and \cite[(11.2)]{Seidel},
from~\eref{cUDDdfn_e1}, as explained at the end of the last paragraph 
in Section~\ref{KM_subs}.\\

\noindent
In \cite[Section~1]{Quillen} and \cite[Appendix~D.2]{Huang},
$\la(D)$ is defined to be either 
$$\la(\ka(D)^*)\otimes\la(\fc(D))\qquad\hbox{or}\qquad
\la^*(\ka(D))\otimes\la(\fc(D));$$ 
the notation is somewhat ambiguous, but looks more like the former.
The usage in~\cite{Quillen} is more consistent with the latter convention;
the usage in~\cite{Huang} is sometimes more consistent with the latter
and sometimes more consistent with the former.\footnote{For example, the last equality
in the last displayed expression in the proof of \cite[Proposition~D.2.2]{Huang}
uses the latter definition, while \cite[(D.2.9)]{Huang} uses the former.}
The latter definition of~$\la(D)$ is used in \cite[Section~(f)]{BF}. 
While $\la(\ka(D)^*)$ and $\la^*(\ka(D))$ are canonically isomorphic,
there are at least two choices of such canonical isomorphisms,
the two provided by the pairings~\eref{pairdfn_e}. 
The ``construction" of the determinant line bundle in~\cite{Quillen} consists
of mentioning that each homomorphism $\de:V\!\lra\!W$ between finite-dimensional vector spaces
gives rise to a natural isomorphism
$$\la(\ka(D)^*)\otimes\la(\fc(D))\lra \la(V^*)\otimes\la(W)
\qquad\hbox{or}\qquad
\la^*(\ka(D))\otimes\la(\fc(D))\lra \la^*(V)\otimes\la(W),$$
but no indication is given what it~is.
In the proof of \cite[Proposition~D.2.2]{Huang}, this isomorphism is described
as a composition of other isomorphisms, 
but some of them are not specified.\footnote{In addition, $(\det F)^{-1}$ should
be $\det F$ at the end of the statement of this proposition and $\det H_2$
should be $(\det H_2)^*$ in the second-to-last displayed equation in the proof;
the first change is necessary for the section~\eref{Quillensec_e} to be continuous
in the finite-dimensional case.}
The construction in \cite[Appendix~D.2]{Huang} is fundamentally based on 
\cite[Proposition~D.2.6]{Huang}, though its proof appears to be incomplete;
see Remark~\ref{Huang_rmk} for details.
However, the statement of this proposition is the basis for the construction 
of the determinant line bundle in this paper and a close cousin of this proposition,
Proposition~\ref{CompET_prp2}, is used to verify the continuity of 
the bundle map~\eref{ETisom_e} for families of exact triples of Fredholm operators.
The construction in~\cite{BF} is limited to Hilbert spaces and still omits some details.  
Neither \cite[Section~(f)]{BF}, \cite[Appendix~D]{Huang}, nor \cite{Quillen} confirms most 
of the properties of the determinant line bundle stated in Subsection~\ref{detLBprop_subs}.\\

\noindent
As noted in \cite[Section~2]{Quillen}, 
the section of ${\det}_{X,Y}$ in the definitions of \cite[Section~1]{Quillen} 
and \cite[Appendix~D.2]{Huang} given~by
\BE{Quillensec_e}
\si(D)=\begin{cases} 1^*\!\otimes\!1,&\hbox{if}~D~\hbox{is isomorphism};\\
0,&\hbox{otherwise};\end{cases}\EE
is continuous.
There is no such section if ${\det}_{X,Y}$ is defined as in~\eref{DLdfn_e},
\cite{EES}, \cite{MW}, \cite{Salamon}, or~\cite{Seidel}.
The definition of ${\det}_{X,Y}$  in \cite[Section~1]{Quillen} and \cite[Appendix~D.2]{Huang}
thus comes with a natural normalization for the topology, but it does not restrict
the topology of ${\det}_{X,Y}$ any further than the properties in Subsection~\ref{detLBprop_subs};
see Section~\ref{classify_subs}.
The alternative definitions seem more natural from the geometric viewpoint,
as typically the spaces $\ka(D)$ describe tangent spaces of some, ideally smooth, 
moduli spaces, and so it seems desirable not to dualize~them.
The alternative definitions also lead to a somewhat nicer appearance of formulas describing 
key properties of the determinant line bundle system.
For example, \cite[Proposition~D.2.2]{Huang} reverses the order of the factors
in the isomorphism of Lemma~\ref{ses_lmm}.

\subsection{Classification of determinant line bundles}
\label{classify_subs}

\noindent
For each exact triple~$\ft$ of Fredholm operators, we denote by~$\Psi_{\ft}$ 
the isomorphism~\eref{cUDDdfn_e}.
By Subsections~\ref{ET_subs}-\ref{DualProp_subs},
this collection of exact triple isomorphisms satisfies all properties
for systems of determinant line bundles listed in Subsection~\ref{detLBprop_subs}.
Let $\{\Psi_{\ft}'\}_{\ft}$ be another collection of isomorphisms for exact 
triples of Fredholm operators satisfying the isomorphism Naturality~II,
Normalization~II,III, and Exact Squares properties.
We show that this collection is as in~\eref{PsicDnew_e} for some $A_{i,c}\!\in\!\R^*$ 
and satisfies the remaining properties for systems of determinant line bundles 
listed in Subsection~\ref{detLBprop_subs}, 
if $A_{i,c}$ are chosen appropriately.
Theorem~\ref{classify_thm} describes all collections of isomorphisms
for exact triples of Fredholm operators satisfying all properties
listed  in Subsection~\ref{detLBprop_subs}.\\

\noindent
For \hbox{$i\!\in\!\Z$} and $c\!\in\!\Z^{\ge0}$ with $c\!\ge\!-i$,  
let $\Psi_{i,c}$ be the isomorphism~$\Psi_{\ft}$ in~\eref{cUDDdfn_e} for the exact~triple
\BE{kafcET_e}\begin{split}
\xymatrix{0\ar[r]& \R^{i+c}\ar[r]\ar[d]^{\bf0}& \R^{i+c}\!\oplus\!\R^c \ar[r]\ar[d]& \R^c\ar[r]\ar[d]&0\\
0\ar[r]& \R^c\ar[r]& \R^c\ar[r]& 0\ar[r]&0\,,}
\end{split}\EE
where the top right and middle arrows are the projections onto the last $c$ coordinates.
Thus,
$$\Psi_{i,c}\big(\Om_{i+c}\!\otimes\!\Om_c^*\otimes\Om_c\!\otimes\!1^*\big)
=(-1)^c\, \Om_{i+2c}\!\otimes\!1^*.$$
Let $\Psi_{i,c}'$ be the isomorphism~$\Psi_{\ft}'$ for the exact triple~\eref{kafcET_e} 
and $A_{i,c}\!\in\!\R^*$ be such that 
\BE{newPsikl_e} \Psi_{i,c}'=A_{i,c}\Psi_{i,c}\,.\EE 
In particular,
\BE{newPsikl_e2} 
\Psi_{i,c}'\big(\Om_{i+c}\!\otimes\!\Om_c^*\otimes\Om_c\!\otimes\!1^*\big)
=(-1)^c A_{i,c}\,\Om_{i+2c}\!\otimes\!1^*.\EE
By  the Normalization~II property on page~\pageref{NormalII_prop},
$A_{i,0}\!=\!1$ for all $i\!\in\!\Z^{\ge0}$.
By the Complex Exact Triples property, $A_{i,c}\!\in\!\R^+$ if $i,c\!\in\!2\Z$.\\

\noindent
Let $\ft$ be an exact triple as in~\eref{cTdiag_e} and
$$\Th'\!:\R^{N'}\lra Y' \qquad\hbox{and}\qquad  \ti\Th''\!:\R^{N''}\lra Y$$ 
be homomorphisms such that $D'\!\in\!U_{X';\Th'}$ and
$D''\!\in\!U_{X'';\fJ_{Y\circ\ti\Th''}}$.
Let $N\!=\!N'\!+\!N''$, $\fI\!:\R^{N'}\!\lra\!\R^N$
be the inclusion as $\R^{N'}\!\times\!0^{N''}$, and
$\fJ\!:\R^N\lra\R^{N''}$ be the projection onto
the last $N''$ coordinates.
We define
\begin{alignat*}{3}
\Th\!:\R^N&\lra X, &\qquad 
\Th(x',x'')&=\fI_Y\big(\Th'(x')\big)+\ti\Th''(x'')
&\quad &\forall\,(x',x'')\in\R^{N'}\!\oplus\!\R^{N''}\,,\\
\Th''\!:\R^{N''}&\lra X'', &\qquad 
\Th''(x'')&=\fJ_Y\big(\ti\Th''(x'')\big)
&\quad &\forall\,x''\in\!\R^{N''}\,.
\end{alignat*}
Thus, the first diagram in Figure~\ref{classify_fig}, where the right column is the exact triple
$$\xymatrix{0\ar[r]& \R^{N'}\ar[r]^{\fI}\ar[d]^{j_{N'}}& \R^N\ar[r]^{\fJ}\ar[d]^{j_N}& \R^{N''}\ar[r]\ar[d]^{j_{N''}}&0\\
0\ar[r]& 0\ar[r]& 0\ar[r]& 0\ar[r]&0\,,}$$
is an exact square of Fredholm operators.
By the Exact Squares and Normalization~II properties, the collection 
$\{\Psi_{\ft}'\}$ is thus determined by the isomorphisms 
$$\hat\cI_{\Th;D}'\!: \la(D)\lra \la(D_{\Th}), \qquad
\hat\cI_{\Th;D}'(\si)=
\Psi_{\ft}'\big(\si\otimes \OmN\!\otimes\!1^*\big),$$
corresponding to the exact triples~\eref{cIhatdiag_e} with $D\!\in\!U_{X;\Th}$.\\

\begin{figure}
\begin{gather*}
\xymatrix{&0\ar[d]&0\ar[d]&0\ar[d]&\\
0\ar[r]& D'\ar[r]\ar[d]& D_{\Th'}'\ar[r]\ar[d]& j_{N'}\ar[r]\ar[d]&0 \\
0\ar[r]& D\ar[r]\ar[d]& D_{\Th}\ar[r]\ar[d]& j_N\ar[r]\ar[d]&0 \\
0\ar[r]& D''\ar[r]\ar[d]& D_{\Th''}''\ar[r]\ar[d]& j_{N''}\ar[r]\ar[d]&0 \\
&0&0&0&\\
&0\ar[d]&0\ar[d]&0\ar[d]&\\
0\ar[r]& \dot{D}\ar[r]\ar[d]& D\ar[r]\ar[d]& {\bf0}_{\ka(D),\fc(D)}\ar[r]\ar[d]&0 \\
0\ar[r]& \dot{D}\ar[r]\ar[d]& D_{\Th_D}\ar[r]\ar[d]& 
\big({\bf0}_{\ka(D),\fc(D)}\big)_{\Th_D}\ar[r]\ar[d]&0 \\
0\ar[r]& {\bf0}\ar[r]\ar[d]& j_{N_D}\ar[r]\ar[d]& j_{N_D}\ar[r]\ar[d]&0 \\
&0&0&0&\\
&0\ar[d]&0\ar[d]&0\ar[d]&\\
0\ar[r]& D\ar[r]\ar[d]& D_{\Th_D}\ar[r]\ar[d]& j_{N_D}\ar[r]\ar[d]&0 \\
0\ar[r]& D\ar[r]\ar[d]& D_{\Th}\ar[r]\ar[d]& j_N\ar[r]\ar[d]&0 \\
0\ar[r]& {\bf0}\ar[r]\ar[d]& j_{N-N_D}\ar[r]\ar[d]& j_{N-N_D}\ar[r]\ar[d]&0 \\
&0&0&0&}
\end{gather*}
\caption{Exact squares of Fredholm operators specifying a determinant line bundle system}
\label{classify_fig}
\end{figure}

\noindent
Given $D\!\in\!\cF(X,Y)$, let $\dot{X}\!\subset\!X$ be a closed linear subspace such that 
the~operator
$$\dot{D}\!:\dot{X}\lra\Im\,D, \qquad \dot{D}(x)= Dx,$$
is an isomorphism and $\Th_D\!:\R^{N_D}\!\lra\!Y$ be a homomorphism 
inducing an isomorphism to~$\fc(D)$ when composed with the projection $Y\!\!\lra\!\fc(D)$.
There is an exact square of Fredholm operators as in the second diagram in Figure~\ref{classify_fig}, 
where the right column is the exact triple
$$\xymatrix{0\ar[r]& \ka(D)\ar[r]\ar[d]^{\bf0}& 
\ka(D)\oplus\R^{N_D}\ar[r]\ar[d]^{{\bf0}_{\Th_D}}& \R^{N_D}\ar[r]\ar[d]^{j_{N_D}}&0\\
0\ar[r]& \fc(D)\ar[r]& \fc(D)\ar[r]& 0\ar[r]&0\,.}$$
By the Exact Squares, isomorphism Naturality~II, and Normalization~III properties and~\eref{newPsikl_e}, 
the isomorphisms~$\hat\cI_{\Th_D;D}'$ above are determined by the isomorphisms~$\Psi_{i,c}'$
and satisfy
$$\hat\cI_{\Th_D;D}'=A_{\ind\,D,\fd(\fc(D))}\hat\cI_{\Th_D;D}\,.$$
For each homomorphism $\Th\!:\R^N\!\lra\!Y$ with $D\!\in\!U_{X;\Th}$, there is
an exact square of Fredholm operators as in the last diagram in Figure~\ref{classify_fig}.
By the Exact Squares and Normalization~II,III  properties 
and the above equation,
\BE{hatcIch_e}\hat\cI_{\Th;D}'=A_{\ind\,D,\fd(\fc(D))}\hat\cI_{\Th;D}\,.\EE

\vspace{.2in}

\noindent
The overlap maps $\hat\cI_{\Th_2;D}'\!\circ\!\hat\cI_{\Th_1;D}'^{\,-1}$
between the above isomorphisms are still
$\hat\cI_{\Th_2;D}\!\circ\!\hat\cI_{\Th_1;D}^{\,-1}$
and in particular are continuous.
By Remark~\ref{4term_rmk}, the isomorphisms~\eref{hatcIch_e} are compatible 
with the isomorphisms~\eref{detisom_e2} given~by 
\BE{hatcIde_e}\cI_{\de}'=
\frac{A_{\fd(V)-\fd(W),\fd(\fc(\de))}}{A_{\fd(V)-\fd(W),\fd(W)}}\,\cI_{\de}\!:
\la(\de)\lra\la({\bf0})\,, \EE
whenever $\de\!:V\!\lra\!W$ is a homomorphism between finite-dimensional vector spaces.
The isomorphisms 
$$\cI_D\!:\la(D)\lra\la(D), \qquad \si\lra A_{\ind\,D,\fd(\fc(D))}^{-1}\si\,,$$
give rise to continuous isomorphisms between the determinant line bundles
in the original and new topologies. 
The suitable exact triples and dualization isomorphisms are given by
\BE{PsicDnew_e}\begin{split}
\Psi_{\ft}'&=\cI_D\circ\Psi_{\ft}\circ \cI_{D'}^{-1}\!\otimes\!\cI_{D''}^{-1}
=\frac{A_{\ind\,D',\fd(\fc(D'))}A_{\ind\,D'',\fd(\fc(D''))}}{A_{\ind\,D,\fd(\fc(D))}}
\,\Psi_{\ft}\,,\\
\quad
\wt\cD_D'&= A_{-1,1}^{\ind\,D}\,\cI_{D^*}\!\circ\!\wt\cD_D\!\circ\!\cI_D^{-1}
=A_{-1,1}^{\ind\,D}\,\frac{A_{\ind\,D,\fd(\fc(D))}}{A_{-\ind\,D,\fd(\ka(D))}}\, \wt\cD_D,
\end{split}\EE
if $\ft$ is as in~\eref{cTdiag_e}.
The extra factors of~$A_{-1,1}$ in the second equation above are needed to achieve 
the Normalization~IV property on page~\pageref{NormalIV_prop}, while
preserving the Dual Exact Triples property.
In the case of the exact triple~\eref{cIhatdiag_e}, $\Psi_{\ft}'\!=\!\hat\cI_{\Th;D}$,
as the case should~be.
The new determinant line bundle system also satisfies the Normalization~IV$^{\star}$ property
if and only if $A_{-k,k}\!=\!A_{-1,1}^k$ for every $k\!\in\!\Z^+$.\\

\noindent
The above argument also implies that the Normalization~IV and Dual Exact Triples
properties on page~\pageref{NormalII_prop} determine the dualization
isomorphisms~$\wt\cD_D$ completely.
Putting everything together, we obtain a complete description of systems
of determinant line bundles.

\begin{mythm}\label{classify_thm}
The map specified by~\eref{newPsikl_e2} sends each system of determinant line bundles 
satisfying the properties in
Subsection~\ref{detLBprop_subs}, other than Normalization~IV$^{\star}$, to the functions
$$\big\{(i,c)\!:\,i\!\in\!\Z,~c\!\in\!\Z^+,~c\!\ge\!-i\big\}\lra\R^*\,,
\qquad (i,c)\lra A_{i,c}\,,\quad
A_{i,c}\in\R^+~\hbox{if}~i,c\!\in\!2\Z,$$
and is a bijection with the set of all such functions.
The determinant line bundle systems that also satisfy
the Normalization~IV$^{\star}$ property correspond to the subset of 
the above functions satisfying $A_{-k,k}\!=\!A_{-1,1}^k$ for all $k\!\in\!\Z^+$.
In particular, the compatible systems of topologies on determinant 
line bundles are in one-to-one correspondence with 
the admissible systems of isomorphisms~$\cI_{\de}$ as 
in~\eref{detisom_e}, \eref{detisom_e2}, and~\eref{hatcIde_e}
and with admissible systems of isomorphisms~$\Psi_{\ft}$ as in~\eref{ETisom_e}.
\end{mythm}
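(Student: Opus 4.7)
The plan is to establish the bijection explicitly using the two-step reduction outlined just before the theorem. For well-definedness and injectivity, I first argue that any system $\{\Psi_{\ft}'\}$ satisfying the stated properties is completely determined by its values on the standard exact triples \eqref{kafcET_e}. The first exact square in Figure~\ref{classify_fig}, together with the Exact Squares and Normalization~II properties applied to the right column of zero maps, reduces an arbitrary $\Psi_{\ft}'$ to the stabilization isomorphisms $\hat\cI_{\Th;D}'$ of \eqref{cIThD_e2}. The second exact square in that figure, combined with Naturality~II,III and Exact Squares, further reduces $\hat\cI_{\Th;D}'$ to the isomorphisms $\Psi_{i,c}'$ for \eqref{kafcET_e}. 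Since these act on one-dimensional spaces and must coincide with $\Psi_{i,c}$ up to a scalar, we recover the scalars $A_{i,c}\!\in\!\R^+$, the positivity coming from the requirement that the isomorphisms $\cI_D$ of \eqref{PsicDnew_e} intertwine two \emph{topologies} on ${\det}_{X,Y}$ and so must be orientation-preserving on each fiber. The normalization $A_{i,0}\!=\!1$ is forced since the right column of \eqref{kafcET_e} becomes tautological when $c\!=\!0$.

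For surjectivity, given any function $(i,c)\!\mapsto\!A_{i,c}$ with $A_{i,0}\!=\!1$, I would define the new system by the rescalings \eqref{PsicDnew_e}, where $\cI_D$ multiplies the fiber of ${\det}_{X,Y}$ at $D$ by $A_{\ind\,D,\fd(\fc(D))}^{-1}$. Continuity of the resulting bundle map $\Psi_{\ft}'$ follows because the rescaling factors depend only on the locally constant quantities $\ind\,D$ and $\fd(\fc(D))$; the overlap maps $\hat\cI_{\Th_2;D}'\circ(\hat\cI_{\Th_1;D}')^{-1}$ differ from those of the original system by the same continuous scalar. The verifications of the algebraic properties (the two Compositions, both Direct Sums, and the Dual Exact Triples) reduce to checking that the rescaling factors cancel, which works since the index is additive under composition and direct sum and the cokernel dimension is additive along short exact sequences. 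The extra factor $A_{-1,1}^{\ind\,D}$ in the formula for $\wt\cD_D'$ is precisely what is needed to maintain the Normalization~III property: applied to $\de\!:\!L\!\to\!\{0\}$ with $\ind\,\de\!=\!1$, $\fd(\fc(\de))\!=\!0$, $\fd(\ka(\de))\!=\!1$, the overall scaling is $A_{-1,1}^1\cdot A_{1,0}/A_{-1,1}\!=\!1$.

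For the Normalization~III$^{\star}$ characterization, I compute that for $D\!\in\!\cF^*(X,Y)$ one has $\fd(\fc(D))\!=\!0$ and $\fd(\ka(D))\!=\!\ind\,D\!=\!k\!\geq\!0$, so the scaling factor in the second line of \eqref{PsicDnew_e} reduces to $A_{-1,1}^k/A_{-k,k}$ (using $A_{k,0}\!=\!1$). Requiring $\wt\cD_D'\!=\!\wt\cD_D$ on all of $\cF^*(X,Y)$ thus yields exactly the constraint $A_{-k,k}\!=\!A_{-1,1}^k$ for every $k\!\in\!\Z^+$. The final assertion about admissible systems $\cI_{\de}$ and $\Psi_{\ft}$ then follows from \eqref{hatcIde_e}, which identifies both families with the same set of scalars $A_{i,c}$; the bijection between these and the determinant line bundle systems is encoded by the trivializations $\hat\cI_{\Th;D}'$ and the construction in Section~\ref{outline_subs}. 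The main obstacle in the argument is the surjectivity step: verifying that the rescaled $\Psi_{\ft}'$ in \eqref{PsicDnew_e} still satisfies all the algebraic properties on \emph{general} exact triples, not just the reduction triples \eqref{kafcET_e}, which requires carefully tracking how $\ind$ and $\fd(\fc)$ behave under the diagram manipulations, but no new geometric input beyond the Exact Squares property already established.
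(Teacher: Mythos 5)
Your route is the same as the paper's: the two exact squares in Figure~\ref{classify_fig} reduce any admissible system first to the stabilization isomorphisms $\hat\cI_{\Th;D}'$ and then to the scalars $A_{i,c}$ attached to the triples~\eref{kafcET_e}, surjectivity is obtained by the rescalings~\eref{PsicDnew_e}, and your computation of the Normalization~III$^{\star}$ constraint $A_{-k,k}=A_{-1,1}^k$ agrees with the paper's. However, two of the supporting claims in your surjectivity step are false as stated and need repair.

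First, $\fd(\fc(D))$ is \emph{not} locally constant on $\cF(X,Y)$ --- only the index is; the cokernel dimension is merely upper semicontinuous --- so $D\mapsto A_{\ind D,\fd(\fc(D))}$ is in general a discontinuous function and the rescaled trivializations define a genuinely different topology rather than the old one. What actually saves the construction is the observation that at a fixed $D$ the factor $A_{\ind D,\fd(\fc(D))}$ is the same for every stabilization $\Th$, so it cancels \emph{identically} in the overlap maps $\hat\cI_{\Th_2;D}'\circ(\hat\cI_{\Th_1;D}')^{-1}$, which therefore coincide with the original continuous ones; the fiberwise maps $\cI_D$ are then continuous between the two topologies because they become the identity when read in the respective trivializations. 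Second, the cokernel dimension is \emph{not} additive along exact triples: by~\eref{ETles_e}, $\fd(\fc(D'))+\fd(\fc(D''))=\fd(\fc(D))+\fd(\Im\,\de)$, so the ratio $A_{\ind D',\fd(\fc(D'))}A_{\ind D'',\fd(\fc(D''))}/A_{\ind D,\fd(\fc(D))}$ appearing in~\eref{PsicDnew_e} is generally not~$1$, and your proposed cancellation mechanism does not apply. The properties nevertheless hold for a simpler reason: $\Psi_{\ft}'=\cI_D\circ\Psi_{\ft}\circ(\cI_{D'}^{-1}\!\otimes\!\cI_{D''}^{-1})$ is a conjugation by fiberwise isomorphisms, so every commutative diagram (Compositions, Direct Sums, Exact Squares, Dual Exact Triples) is inherited verbatim, while Normalization~II and Naturality~III require the ratio to equal~$1$ only when the relevant operators are surjective (where $\fd(\fc)=0$ and index additivity gives $A_{i',0}A_{i'',0}/A_{i,0}=1$) or when one operator of the triple is an isomorphism (which forces $\ind$ and $\fd(\fc)$ of the other two to coincide). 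Finally, your justification of the positivity $A_{i,c}\in\R^+$ via ``orientation-preservation'' is not by itself an argument --- there is no preferred orientation on the fibers --- and should instead invoke continuity of $\Psi'$ together with Normalization~II along suitable paths of exact triples, as in the derivation of~\eref{hatcIch_e}.
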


\noindent
By Theorem~\ref{classify_thm} and the preceding discussion,  
the section~$S$ of ${\det}_{X,Y}^*$ given~by
$$S_D(\si)=\begin{cases}c,&\hbox{if}~\si=c\,1\!\otimes\!1^*;\\
0,&\hbox{if}~D~\hbox{is not an isomorphism};\end{cases}$$
is continuous.
This is the analogue of the section~\eref{Quillensec_e} for
the convention~\eref{DLdfn_e}.

\begin{rmk}\label{Seidel_rmk}
According to \cite[Remark~11.1]{Seidel}, 
there are two possible sign conventions for the determinant line bundle 
and the sign convention in  \cite[Section~(11a)]{Seidel} is the same as in~\cite{KM}.
As noted in Subsection~\ref{othset_subs}, 
the setup in \cite[Section~(11a)]{Seidel} corresponds
to the setup in \cite[Chapter~I]{KM} via the second pairing in~\eref{pairdfn_e}.
The alternatives for \cite[(11.2)]{Seidel} and \cite[(11.3)]{Seidel} specified
in  \cite[Remark~11.1]{Seidel} for the ``other" sign convention 
do not satisfy the key commutativity requirement on the preceding page in~\cite{Seidel}.
In order for this requirement to be satisfied, the sign in \cite[(11.2)]{Seidel} must 
be kept precisely the same (contrary to what is explicitly stated in \cite[Remark~11.1]{Seidel});
This ``other" convention would then correspond to the setup in \cite[Chapter~I]{KM} 
via the first pairing in~\eref{pairdfn_e}.
Furthermore, by Theorem~\ref{classify_thm}, there are infinitely many possible sign conventions,
at least several of which seem quite natural. 
The isomorphisms~\eref{hatcIch_e} satisfy the two requirements above the diagram on 
page~150 in~\cite{Seidel} provided $A_{0,1}\!>\!0$.
These systems of isomorphisms can be narrowed down by replacing 
the Normalization~IV property on page~\pageref{NormalIV_prop} 
with the Normalization~IV$^{\star}$ property ($A_{-k,k}\!=\!A_{-1,1}^k$ for all $k\!\in\!\Z^+$),
by specifying the  dualization  or direct sum isomorphisms, i.e.
$$A_{-i,i+c}=A_{-1,1}^iA_{i,c} \quad\hbox{or}\quad A_{i,c}=A_{0,1}^c
\qquad\forall~i\!\in\!\Z,\,c\!\in\!\Z^{\ge0},\,c\!\ge\!-i,$$
and/or by requiring the isomorphisms~$\cI_{\de}$ to be given~by
$$\cI_{\de}\!:\la(\de)\lra\la({\bf 0}), \qquad 
1\!\otimes\!1^*\lra (\det\de)^{-1}\,v\!\otimes\!v^*\,,$$
whenever $\de\!:V\!\lra\!V$ is an isomorphism and  $v\!\in\!\la(V)\!-\!0$
($A_{0,c}\!=\!1$ for all $c\!\in\!\Z^+$).
The strongest of these additional conditions, specifying the isomorphisms 
for direct sums of Fredholm operators, seems to be the least natural requirement
to make.
\end{rmk}

\section{Linear algebra}
\label{lin_alg}

\subsection{Finite-dimensional vector spaces}
\label{FinDim_subs}

\noindent
In this subsection, we make a number of purely algebraic observations 
concerning finite-dimensional vector spaces
that lie behind the determinant line construction.

\begin{lmm}[{\cite[Proposition 1(i)]{KM}}]\label{ses_lmm}
Every short exact sequence
\BE{ses_e}0\lra V'\stackrel{\fI}{\lra} V\stackrel{\fJ}{\lra} V''\lra 0\EE
induces a natural isomorphism
$$\w_V\!:\la(V')\otimes\la(V'')\lra \la(V) .$$
\end{lmm}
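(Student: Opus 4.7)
The plan is to construct $\w_V$ explicitly via a splitting of $\fJ$ and then verify that this construction is independent of the splitting chosen. Fix a linear splitting $s\!:V''\!\lra\!V$ of $\fJ$, so that $\fJ\!\circ\!s=\id_{V''}$ and $V=\fI(V')\oplus s(V'')$. Writing $k'\!=\!\fd(V')$ and $k''\!=\!\fd(V'')$, so that $\fd(V)\!=\!k'\!+\!k''$, define
$$\w_V^s\!:\la(V')\otimes\la(V'')\lra\la(V), \qquad
(v_1'\!\w\!\ldots\!\w\!v_{k'}')\otimes (v_1''\!\w\!\ldots\!\w\!v_{k''}'')\lra
\fI(v_1')\!\w\!\ldots\!\w\!\fI(v_{k'}')\!\w\!s(v_1'')\!\w\!\ldots\!\w\!s(v_{k''}'').$$
This is well-defined on $\la(V')\otimes\la(V'')$ because the map is multilinear and alternating in each group of arguments. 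Since $\fI$ is injective and $s$ is injective with $\fI(V')\cap s(V'')\!=\!0$, the image of a simple tensor of nonzero elements is a nonzero element of the one-dimensional space $\la(V)$; hence $\w_V^s$ is an isomorphism.

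Next I would show that $\w_V^s$ does not depend on the splitting $s$. Given a second splitting $s'$, the difference $s'\!-\!s\!:V''\!\lra\!V$ satisfies $\fJ\circ(s'\!-\!s)=0$, so it factors as $\fI\circ T$ for a unique $T\!:V''\!\lra\!V'$. Expanding
$$s'(v_1'')\!\w\!\ldots\!\w\!s'(v_{k''}'')=\bigl(s(v_1'')+\fI(Tv_1'')\bigr)\!\w\!\ldots\!\w\!\bigl(s(v_{k''}'')+\fI(Tv_{k''}'')\bigr),$$
and pre-wedging with $\fI(v_1')\!\w\!\ldots\!\w\!\fI(v_{k'}')$, every cross term contains at least $k'\!+\!1$ factors lying in the $k'$-dimensional subspace $\fI(V')\!\subset\!V$, and therefore vanishes. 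Only the term with all $s(v_j'')$ factors survives, showing $\w_V^{s'}\!=\!\w_V^s$. We may thus denote the common value by $\w_V$.

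Finally I would check naturality: given a morphism of short exact sequences (an isomorphism of diagrams in the sense used elsewhere in the paper, or more generally any morphism with isomorphic vertical maps), a splitting of the lower row pulls back to a splitting of the upper row through the vertical isomorphism on $V''$. Unwinding the definition of $\w_V$ on each side using these compatible splittings shows that the square
$$\xymatrix{\la(V')\otimes\la(V'')\ar[r]^>>>>>{\w_V}\ar[d]&\la(V)\ar[d]\\
\la(\wt{V}')\otimes\la(\wt{V}'')\ar[r]^>>>>>{\w_{\wt{V}}}&\la(\wt{V})}$$
commutes, which is the required naturality.

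The only substantive obstacle is verifying the splitting-independence in the second paragraph; everything else is formal. The key cancellation is the observation that the correction $\fI\circ T$ always produces additional $\fI(V')$-factors that exceed the dimension of $\fI(V')$ when combined with the pre-existing $\fI(v_j')$ wedge, and so drop out by antisymmetry. This is precisely the reason the product $\la(V')\otimes\la(V'')$, rather than any non-top exterior powers, is what naturally appears.
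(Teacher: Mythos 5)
Your proof is correct and follows essentially the same route as the paper's: the paper works with arbitrary lifts $v_1,\ldots,v_\ell\in V$ of a basis of $V''$ rather than a global splitting $s$, but the key point in both arguments is identical — modifying the lifts by elements of $\fI(V')$ cannot change the wedge product because the factor $\fI(v_1')\w\ldots\w\fI(v_{k'}')$ already exhausts $\la(\fI(V'))$, so all correction terms vanish by antisymmetry. Your splitting-based packaging and the paper's basis-change/determinant check are interchangeable ways of verifying well-definedness.
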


\begin{proof}
If $v_1',\ldots,v_k'$ is a basis for $V'$ and $v_1,\ldots,v_{\ell}\!\in\!V$ are such that 
$\fJ(v_1),\ldots,\fJ(v_{\ell})$ is a basis for~$V''$,
$v_1'\!\w\!\ldots\!\w\!v_k'$ and $\fJ(v_1)\!\w\!\ldots\!\w\!\fJ(v_{\ell})$
span $\la(V')$ and $\la(V'')$, respectively.
By the exactness of~\eref{ses_e}, $\fI(v_1'),\ldots,\fI(v_k'),v_1,\ldots,v_{\ell}$ is 
basis for $V$ and so the map
\BE{sesmap_e} \w_V\!: v_1'\!\w\!\ldots\!\w\!v_k'
\otimes \fJ(v_1)\!\w\!\ldots\!\w\!\fJ(v_{\ell})
\lra \fI(v_1')\!\w\!\ldots\!\w\!\fI(v_k')\w
v_1\!\w\!\ldots\!\w\!v_{\ell}\EE
induces an isomorphism $\la(V')\!\otimes\!\la(V'')\lra \la(V)$.
By the exactness of~\eref{ses_e}, each $v_i\!\in\!V$ is determined by $\fJ(v_i)\!\in\!V''$
up to a linear combination of $\fI(v_1'),\ldots,\fI(v_k')$, and so the right-hand side of 
\eref{sesmap_e} is determined by $v_1',\ldots,v_k'\!\in\!V'$ and 
$\fJ(v_1),\ldots,\fJ(v_{\ell})\!\in\!V''$.
Changing the collections $v_1',\ldots,v_k'\!\in\!V'$ and $v_1,\ldots,v_{\ell}\!\in\!V$
by a $k\!\times\!k$-matrix $A'$ and an $\ell\!\times\!\ell$-matrix $A$, respectively,
changes the wedge products of the first $k$ vectors and the last $\ell$ vectors by
$\det A'$ and $\det A$, respectively, on both sides of~\eref{sesmap_e}.
Thus, the isomorphism induced by~\eref{sesmap_e} is independent of the choices of 
collections  $v_1',\ldots,v_k'\!\in\!V'$ and $v_1,\ldots,v_{\ell}\!\in\!V$ as above.
It clearly commutes with isomorphisms of short exact sequences.
\end{proof}

\noindent
The next lemma follows immediately from the definitions of $\cP$ in~\eref{cPisom_e} 
and of~$\w_V$ above.

\begin{lmm}\label{sesdual_lmm}
For every finite-dimensional vector space~$V$,
\BE{sesdual_e1}\cP(v^*)= (\cP v)^* \qquad\forall\,v\in\la(V)\!-\!0.\EE
For every isomorphism $\de\!:V\!\lra\!W$ between finite-dimensional vector spaces,
\BE{sesdual_e2}\la(\de^*)\cP\big((\la(\de)v)^*\big)=\cP(v^*) \qquad\forall\,v\in\la(V)\!-\!0.\EE
For every short exact sequence~\eref{ses_e},
\BE{sesdual_e3}\cP\big((\la(\fI)v'\w_V\!v'')^*\big)
=\cP(v''^*)\w_{V^*}\la(\fI^*)^{-1}\cP(v'^*) 
\quad\forall\,v'\!\in\!\la(V')\!-\!0,\,v''\!\in\!\la\big(V/\fI(V')\big)\!-\!0.\EE
\end{lmm}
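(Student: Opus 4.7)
The plan is to verify all three identities by direct basis-level computation and then extend by the multilinearity of $\cP$, of the wedge maps $\w_V$, and of the duality operation $v\!\mapsto\!v^*$ on a line. For~\eref{sesdual_e1}, I would fix a basis $e_1,\dots,e_n$ of $V$ with dual basis $e_1^*,\dots,e_n^*$, and set $\Om\!=\!e_1\!\w\!\dots\!\w\!e_n$ and $\Om'\!=\!e_1^*\!\w\!\dots\!\w\!e_n^*$. The definition~\eref{cPisom_e} gives $\cP(\Om')(\Om)\!=\!(-1)^{\binom{n}{2}}$, hence $\cP(\Om^*)\!=\!(-1)^{\binom{n}{2}}\Om'$ in $\la(V^*)$. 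Applying~\eref{cPisom_e} in the other direction, with $\Om$ viewed in $\la(V^{**})$ via the canonical identification, yields $(\cP\Om)(\Om')\!=\!(-1)^{\binom{n}{2}}$, so $(\cP\Om)^*\!=\!(-1)^{\binom{n}{2}}\Om'$ as well. For~\eref{sesdual_e2} with an isomorphism $\de\!:V\!\lra\!W$, the basis $f_i\!=\!\de(e_i)$ of $W$ satisfies $\de^*f_i^*\!=\!e_i^*$, so $\la(\de^*)$ sends the top wedge of $\{f_i^*\}$ to that of $\{e_i^*\}$; together with~\eref{sesdual_e1} applied to both $V$ and $W$, this gives~\eref{sesdual_e2}.

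The substantive computation is~\eref{sesdual_e3}. I would choose a basis $e_1',\dots,e_k'$ of $V'$ and vectors $e_1,\dots,e_\ell\!\in\!V$ so that $\fJ(e_1),\dots,\fJ(e_\ell)$ is a basis of $V''$. Let $\al_1',\dots,\al_k',\be_1,\dots,\be_\ell\!\in\!V^*$ be the basis dual to $\fI(e_1'),\dots,\fI(e_k'),e_1,\dots,e_\ell$; a direct computation from the definition of the transpose gives $\fI^*\al_i'\!=\!(e_i')^*$ in $(V')^*$ and $\fJ^*\eta_j\!=\!\be_j$, where $\eta_j\!\in\!(V'')^*$ is dual to $\fJ(e_j)$. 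For $v'\!=\!e_1'\!\w\!\dots\!\w\!e_k'$ and $v''\!=\!\fJ(e_1)\!\w\!\dots\!\w\!\fJ(e_\ell)$, the element $\la(\fI)v'\!\w_V\!v''$ is the top wedge of the basis $\{\fI(e_i'),e_j\}$ of $V$, so the computation used for~\eref{sesdual_e1} gives
$$\cP\big((\la(\fI)v'\!\w_V\!v'')^*\big)=(-1)^{\binom{k+\ell}{2}}\,\al_1'\!\w\!\dots\!\w\!\al_k'\!\w\!\be_1\!\w\!\dots\!\w\!\be_\ell.$$
On the other hand, the same computation on $V'$ and $V''$ yields $\cP(v'^*)\!=\!(-1)^{\binom{k}{2}}(e_1')^*\!\w\!\dots\!\w\!(e_k')^*$ and $\cP(v''^*)\!=\!(-1)^{\binom{\ell}{2}}\eta_1\!\w\!\dots\!\w\!\eta_\ell$, and, choosing the $\al_i'$ as lifts of $(e_i')^*$ under $\fI^*$, the formula~\eref{sesmap_e} applied to the dual short exact sequence $0\!\lra\!(V'')^*\!\stackrel{\fJ^*}{\lra}\!V^*\!\stackrel{\fI^*}{\lra}\!(V')^*\!\lra\!0$ gives
$$\cP(v''^*)\!\w_{V^*}\!\la(\fI^*)^{-1}\cP(v'^*)=(-1)^{\binom{k}{2}+\binom{\ell}{2}}\,\be_1\!\w\!\dots\!\w\!\be_\ell\!\w\!\al_1'\!\w\!\dots\!\w\!\al_k'.$$
Swapping the two blocks on the right-hand side of the first display contributes $(-1)^{k\ell}$, and the identity $\binom{k+\ell}{2}\!=\!\binom{k}{2}+\binom{\ell}{2}+k\ell$ shows that the two signs agree.

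The only real obstacle is this sign bookkeeping: the factor $(-1)^{\binom{n}{2}}$ built into the definition~\eref{cPisom_e} is calibrated precisely so that the dual-of-wedge formula for $\w_V$ matches the wedge-of-duals formula for $\w_{V^*}$, with the block-swap sign $(-1)^{k\ell}$ absorbed into the expansion of $\binom{k+\ell}{2}$. This is the sense in which $\cP$ ``fits better with short exact sequences'' than the unsigned first pairing in~\eref{pairdfn_e}, as advertised just before the statement of the lemma; once this cancellation is verified on a basis, multilinearity extends the identity to arbitrary nonzero $v'\!\in\!\la(V')$ and $v''\!\in\!\la(V/\fI(V'))$.
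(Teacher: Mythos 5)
Your proof is correct and is precisely the direct basis-level verification that the paper compresses into the remark that the lemma ``follows immediately from the definitions of $\cP$ and of $\w_V$''; in particular, your sign bookkeeping via $\binom{k+\ell}{2}=\binom{k}{2}+\binom{\ell}{2}+k\ell$ is exactly the cancellation that makes \eref{sesdual_e3} hold. No gaps.
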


\noindent
From \eref{sesmap_e}, we immediately find that the isomorphisms $\w_V$ 
of Lemma~\ref{ses_lmm} satisfy graded commutativity, 
as described by the next lemma.
Corollary~\ref{ses_crl} below is a special case of this lemma (either $V_{\tnT\tnR}\!=\!0$
or $V_{\tnB\tnL}\!=\!0$).

\begin{lmm}[{\cite[Proposition~1(ii)]{KM}}]\label{ses_lmm2}
For every commutative diagram 
$$\xymatrix{&0\ar[d]&0\ar[d]&0\ar[d]&\\
0\ar[r]& V_{\tnT\tnL} \ar[r]\ar[d]& V_{\tnT\tnM} \ar[r]\ar[d]& V_{\tnT\tnR}\ar[r]\ar[d]& 0\\
0\ar[r]& V_{\tnC\tnL} \ar[r]\ar[d]& V_{\tnC\tnM} \ar[r]\ar[d]& V_{\tnC\tnR}\ar[r]\ar[d]& 0\\
0\ar[r]& V_{\tnB\tnL} \ar[r]\ar[d]& V_{\tnB\tnM} \ar[r]\ar[d]& V_{\tnB\tnR}\ar[r]\ar[d]& 0\\
&0&0&0&}$$
of exact rows and columns, the diagram 
$$\xymatrix{ \la(V_{\tnT\tnL})\otimes\la(V_{\tnB\tnL}) 
  \otimes\la(V_{\tnT\tnR})\otimes\la(V_{\tnB\tnR})
\ar[d]|{~~\w_{V_{\tnC\tnL}}\otimes\w_{V_{\tnC\tnR}}}
\ar[rrrr]^>>>>>>>>>>>>>>>>>>>>>{\w_{V_{\tnT\tnM}}\otimes\w_{V_{\tnB\tnM}}\,\circ\,
\id \otimes R\otimes \id}
&&&& \la(V_{\tnT\tnM})\otimes\la(V_{\tnB\tnM})\ar[d]|{\w_{V_{\tnC\tnM}}}\\
\la(V_{\tnC\tnL})\otimes\la(V_{\tnC\tnR})\ar[rrrr]^{\w_{V_{\tnC\tnM}}}&&&& \la(V_{\tnC\tnM})}$$
commutes.
\end{lmm}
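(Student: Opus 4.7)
The plan is to prove Lemma~\ref{ses_lmm2} by an explicit basis computation, tracking both compositions around the square and verifying that the graded-commutativity sign from the $R$-twist precisely cancels the sign produced by rearranging wedge factors.

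First, I would set $p\!=\!\fd(V_{\tnT\tnL})$, $q\!=\!\fd(V_{\tnT\tnR})$, $r\!=\!\fd(V_{\tnB\tnL})$, and $s\!=\!\fd(V_{\tnB\tnR})$, so by the exactness of the rows and columns the other dimensions are forced: $\fd(V_{\tnT\tnM})\!=\!p\!+\!q$, $\fd(V_{\tnC\tnL})\!=\!p\!+\!r$, $\fd(V_{\tnB\tnM})\!=\!r\!+\!s$, $\fd(V_{\tnC\tnR})\!=\!q\!+\!s$, and $\fd(V_{\tnC\tnM})\!=\!p\!+\!q\!+\!r\!+\!s$. I would then choose four families of vectors: a basis $a_1,\ldots,a_p$ of $V_{\tnT\tnL}$; elements $b_1,\ldots,b_q\!\in\!V_{\tnT\tnM}$ projecting to a basis of $V_{\tnT\tnR}$; elements $c_1,\ldots,c_r\!\in\!V_{\tnC\tnL}$ projecting to a basis of $V_{\tnB\tnL}$; and elements $d_1,\ldots,d_s\!\in\!V_{\tnC\tnM}$ whose image under the composite $V_{\tnC\tnM}\!\lra\!V_{\tnC\tnR}\!\lra\!V_{\tnB\tnR}$ (equivalently $V_{\tnC\tnM}\!\lra\!V_{\tnB\tnM}\!\lra\!V_{\tnB\tnR}$) is a basis of $V_{\tnB\tnR}$. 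The commutativity of~\eref{SQexact_e1} guarantees that these choices simultaneously produce compatible bases on each of the nine spaces, and in particular that $\alpha_T(a_i),b_j,c_k,d_\ell$ give, after transport by the appropriate composites, a basis of $V_{\tnC\tnM}$.

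Next I would feed the element
$$
(a_1\!\w\!\ldots\!\w\!a_p)\otimes(\bar c_1\!\w\!\ldots\!\w\!\bar c_r)
\otimes(\bar b_1\!\w\!\ldots\!\w\!\bar b_q)\otimes (\bar d_1\!\w\!\ldots\!\w\!\bar d_s),
$$
where overbars denote the appropriate projected basis representatives, into both compositions in the square. Along the top--right route, the map $\id\!\otimes\!R\!\otimes\!\id$ produces the sign $(-1)^{qr}$ from swapping $\la(V_{\tnB\tnL})$ past $\la(V_{\tnT\tnR})$; applying $\w_{V_{\tnT\tnM}}\!\otimes\!\w_{V_{\tnB\tnM}}$ and then $\w_{V_{\tnC\tnM}}$ via~\eref{sesmap_e} yields a wedge of the form
$$\gamma_M\alpha_T(a_\bullet)\!\w\!\gamma_M(b_\bullet)\!\w\!\alpha_C(c_\bullet)\!\w\!d_\bullet$$
multiplied by $(-1)^{qr}$. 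Along the bottom--left route, $\w_{V_{\tnC\tnL}}\!\otimes\!\w_{V_{\tnC\tnR}}$ followed by $\w_{V_{\tnC\tnM}}$ gives
$$\alpha_C\gamma_L(a_\bullet)\!\w\!c_\bullet\!\w\!\gamma_M(b_\bullet)\!\w\!d_\bullet;$$
using $\alpha_C\gamma_L\!=\!\gamma_M\alpha_T$ and swapping the block of $r$ elements $c_\bullet$ past the block of $q$ elements $\gamma_M(b_\bullet)$ produces exactly $(-1)^{qr}$, matching the top route.

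The only delicate point in this calculation, and the main bookkeeping obstacle, is to ensure that at each application of~\eref{sesmap_e} I choose liftings that are consistent across both routes; commutativity of the columns and rows of~\eref{SQexact_e1} is precisely what makes this possible (for instance, $\alpha_C(c_k)$ is simultaneously the natural image of $c_k\!\in\!V_{\tnC\tnL}$ under $\alpha_C$ and a natural lift of $\delta_M\alpha_C(c_k)\!=\!\alpha_B\delta_L(c_k)$ from $V_{\tnB\tnM}$). Once consistent liftings are fixed, the remainder is purely the sign count above. Finally, since the isomorphism $\w_V$ of Lemma~\ref{ses_lmm} is independent of the choices used in its construction, equality of the two compositions on one spanning element of the graded line $\la(V_{\tnT\tnL})\!\otimes\!\la(V_{\tnB\tnL})\!\otimes\!\la(V_{\tnT\tnR})\!\otimes\!\la(V_{\tnB\tnR})$ proves the lemma.
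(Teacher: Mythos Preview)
Your approach is correct and is precisely the explicit basis computation that the paper has in mind when it says the lemma follows ``immediately from~\eref{sesmap_e}''; the paper gives no further details beyond that remark. Two small cleanups: your reference to~\eref{SQexact_e1} should be to the $3\!\times\!3$ diagram in the lemma statement (the label~\eref{SQexact_e1} is the Fredholm-operator exact square), and the maps $\alpha_T,\gamma_M,\alpha_C,\ldots$ you use should be introduced explicitly as the structure maps of that diagram before they appear in the computation.
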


\begin{crl}\label{ses_crl}
For every commutative diagram 
$$\xymatrix{&& 0\ar@/^1pc/[r] & V_{\tnL\tnR}\ar@/^1pc/[r] \ar[dr]& 0\\
0\ar@{.>}[dr]&0\ar@{-->}@/^1pc/[r]& V_{\tnL\tnC} \ar[ur] \ar@{-->}[dr] && 
V_{\tnC\tnR}\ar[dr] \ar@{.>}@/^1pc/[r]&0&0\\
0\ar@/_1pc/[r] &V_{\tnL\tnL}\ar[ur] \ar@{.>}@/_1pc/[rr] && V_{\tnC\tnC} \ar@{-->}@/_1pc/[rr] 
\ar@{.>}[ru] && V_{\tnR\tnR} \ar@/_1pc/[r] \ar@{-->}[ru]& 0}$$
of 4 exact short sequences, the diagram
$$\xymatrix{\la(V_{\tnL\tnL})\otimes\la(V_{\tnL\tnR})\otimes\la(V_{\tnR\tnR})
\ar[rr]^>>>>>>>>>>>{\w_{V_{\tnL\tnC}}\otimes\id}
\ar[d]|{\id\otimes\w_{V_{\tnC\tnR}}} && \la(V_{\tnL\tnC})\otimes\la(V_{\tnR\tnR}) 
\ar[d]|{~\w_{V_{\tnC\tnC}}}\\
\la(V_{\tnL\tnL})\otimes\la(V_{\tnC\tnR}) \ar[rr]^{\w_{V_{\tnC\tnC}}}&& \la(V_{\tnC\tnC})}$$
commutes.
\end{crl}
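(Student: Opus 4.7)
The plan is to realize the four-sequence ``associativity'' diagram of the corollary as a degenerate case of the $3\!\times\!3$ exact-square diagram of Lemma~\ref{ses_lmm2}. Specifically, I will apply the lemma to the commutative diagram
$$\xymatrix{& 0\ar[d] & 0\ar[d] & 0\ar[d] &\\
0\ar[r]& V_{\tnL\tnL}\ar[r]\ar[d]^{\id}& V_{\tnL\tnC}\ar[r]\ar[d]& V_{\tnL\tnR}\ar[r]\ar[d]& 0\\
0\ar[r]& V_{\tnL\tnL}\ar[r]\ar[d]& V_{\tnC\tnC}\ar[r]\ar[d]& V_{\tnC\tnR}\ar[r]\ar[d]& 0\\
0\ar[r]& 0\ar[r]\ar[d]& V_{\tnR\tnR}\ar[r]\ar[d]^{\id}& V_{\tnR\tnR}\ar[r]\ar[d]& 0\\
& 0 & 0 & 0 &}$$
whose top row is SES~1 in the corollary's diagram, whose middle column is SES~2, whose middle row is SES~3, and whose right column is SES~4 (in the order of the four sequences appearing in the statement of the corollary). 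The left column and bottom row are exact because one of their terms is zero, so the diagram satisfies the hypotheses of Lemma~\ref{ses_lmm2}.

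Next I would identify, under the canonical isomorphism $\la(0)\!=\!\R$, each of the ``degenerate'' wedge isomorphisms with an identity. Concretely, for the left column and the bottom row the map $\w$ of Lemma~\ref{ses_lmm} reduces to the identification $x\otimes1\!\mapsto\!x$ (or $1\otimes x\!\mapsto\!x$), directly from formula~\eref{sesmap_e}. Similarly, the $R$-swap appearing in the top arrow of the lemma's square acts on $\la(V_{\tnB\tnL})\!\otimes\!\la(V_{\tnT\tnR})\!=\!\R\!\otimes\!\la(V_{\tnL\tnR})$, which has a factor of degree~$0$ (since $\fd(0)\!=\!0$), so by~\eref{Risom_e} the swap introduces no sign and reduces to the obvious identification.

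With these simplifications, the commuting square produced by Lemma~\ref{ses_lmm2}
$$\xymatrix{ \la(V_{\tnL\tnL})\otimes\R\otimes\la(V_{\tnL\tnR})\otimes\la(V_{\tnR\tnR})
\ar[d]|{\w_{V_{\tnL\tnL}}\otimes\w_{V_{\tnC\tnR}}}
\ar[rr]^>>>>>>>>>{\w_{V_{\tnL\tnC}}\otimes\w_{V_{\tnR\tnR}}\,\circ\,\id\otimes R\otimes\id}
&& \la(V_{\tnL\tnC})\otimes\la(V_{\tnR\tnR})\ar[d]|{\w_{V_{\tnC\tnC}}}\\
\la(V_{\tnL\tnL})\otimes\la(V_{\tnC\tnR})\ar[rr]^{\w_{V_{\tnC\tnC}}} && \la(V_{\tnC\tnC})}$$
collapses to exactly the diagram in the statement of the corollary: the top arrow becomes $\w_{V_{\tnL\tnC}}\!\otimes\!\id$, the left arrow becomes $\id\!\otimes\!\w_{V_{\tnC\tnR}}$, and the bottom and right arrows are the two applications of $\w_{V_{\tnC\tnC}}$ to SES~3 and SES~2 respectively.

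There is no real obstacle to overcome here, since the content of the corollary is already contained in Lemma~\ref{ses_lmm2}; the only bookkeeping needed is to verify that the wedge isomorphisms attached to the two ``trivial'' short exact sequences, and the $R$-swap over the degree-$0$ factor, all act as identifications and thus drop out of the diagram.
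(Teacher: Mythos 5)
Your proposal is correct and is precisely the argument the paper intends: the text preceding Corollary~\ref{ses_crl} states that it is the special case of Lemma~\ref{ses_lmm2} with $V_{\tnB\tnL}\!=\!0$ (or $V_{\tnT\tnR}\!=\!0$), and your $3\!\times\!3$ grid realizes exactly that degeneration, with the trivial wedge isomorphisms and the sign-free $R$-swap over the degree-$0$ factor handled correctly.
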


\subsection{Exact triples of Fredholm operators}
\label{ET_subs}

\noindent
We begin this subsection by extending the isomorphism of Lemma~\ref{ses_lmm}
to exact triples of Fredholm operators.
It is immediate from the explicit formula~\eref{cUDDdfn_e} for the new isomorphism 
that it satisfies the Naturality~II, Normalization~II,III, Direct Sums~I,II,
and Complex Exact Triples properties in Subsection~\ref{detLBprop_subs}.
We verify that it also satisfies the Dual Exact Triples property 
with $\wt\cD_D$ given by~\eref{cDdfn_e} and the Compositions~I,II properties.\\

\noindent
We will use the natural pairing of a one-dimensional vector space $L$
with its dual given~by
$$L^*\otimes L\lra \R, \qquad \al\otimes v\lra\al(v).$$
If $V$ is a finite-dimensional vector space and $v\!\in\!\la(V)$, we denote by
$$\lr{v}\equiv \dim V+2\Z\in\Z_2$$
the degree of~$v$ as an element of the $\Z_2$-line $\la(V)$.

\begin{prp}[{\cite[Proposition D.2.3]{Huang}}]\label{QuillenIsom_prp}
Every exact triple~$\ft$ of Fredholm operators as in~\eref{cTdiag_e}
induces a natural isomorphism
$$\Psi_{\ft}\!:\la(D')\otimes \la(D'')\lra\la(D).$$
\end{prp}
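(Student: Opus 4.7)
The plan is to reduce the construction of $\Psi_{\ft}$ to the finite-dimensional isomorphism of Lemma~\ref{ses_lmm}, applied to the snake-lemma long exact sequence associated with the exact triple~$\ft$. Specifically, applying the snake lemma to the commutative diagram~\eref{cTdiag_e} yields a six-term exact sequence of finite-dimensional vector spaces
\[
0\lra\ka(D')\stackrel{\fI_X}{\lra}\ka(D)\stackrel{\fJ_X}{\lra}\ka(D'')
\stackrel{\partial}{\lra}\fc(D')\stackrel{\fI_Y}{\lra}\fc(D)\stackrel{\fJ_Y}{\lra}\fc(D'')\lra 0,
\]
where $\partial$ is the connecting homomorphism. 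Let $K=\ker\partial\subseteq\ka(D'')$ and $Q=\Im\,\partial\subseteq\fc(D')$, with $C=\ka(D'')/K\cong Q$. These intermediate subspaces are intrinsic to~$\ft$ and involve no choices.

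First I would break the six-term exact sequence into four short exact sequences:
\[
0\lra\ka(D')\lra\ka(D)\lra K\lra 0,\quad
0\lra K\lra\ka(D'')\lra C\lra 0,
\]
\[
0\lra C\lra\fc(D')\lra Q\lra 0,\quad
0\lra Q\lra\fc(D)\lra\fc(D'')\lra 0.
\]
Applying Lemma~\ref{ses_lmm} to each gives canonical isomorphisms
$\la(\ka(D'))\!\otimes\!\la(K)\!\cong\!\la(\ka(D))$,
$\la(K)\!\otimes\!\la(C)\!\cong\!\la(\ka(D''))$,
$\la(C)\!\otimes\!\la(Q)\!\cong\!\la(\fc(D'))$, and
$\la(Q)\!\otimes\!\la(\fc(D''))\!\cong\!\la(\fc(D))$. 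Starting from
$\la(D')\!\otimes\!\la(D'')\!=\!\la(\ka(D'))\!\otimes\!\la^*(\fc(D'))\!\otimes\!\la(\ka(D''))\!\otimes\!\la^*(\fc(D''))$,
I would substitute via the second sequence to replace $\la(\ka(D''))$ by $\la(K)\!\otimes\!\la(C)$, then shift $\la(K)$ next to $\la(\ka(D'))$ using the graded swap~$R$ of~\eref{Risom_e} and combine via the first sequence to obtain $\la(\ka(D))$. The surviving $\la(C)$ would then cancel a matching $\la^*(C)$ factor (produced from $\la^*(\fc(D'))$ via the dual of the third sequence), leaving $\la^*(Q)$; finally the dual of the fourth sequence combines $\la^*(Q)\!\otimes\!\la^*(\fc(D''))$ into $\la^*(\fc(D))$. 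Writing this composition out gives the explicit formula~\eref{cUDDdfn_e} that the paper subsequently references.

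Naturality with respect to isomorphisms of exact triples is then automatic: the snake construction is functorial in~$\ft$, the subspaces $K,Q,C$ are intrinsically defined, each $\w$ of Lemma~\ref{ses_lmm} is natural, and the graded swap and evaluation pairings commute with linear isomorphisms. Independence of all auxiliary choices holds because no choices enter the construction; only the intrinsic snake-lemma filtration is used. The grading is consistent since $\ind\,D=\ind\,D'+\ind\,D''$ under~$\ft$, matching the degrees on the two sides of~$\Psi_{\ft}$.

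The main obstacle I expect is bookkeeping of the Koszul signs: moving $\la(K)$ past $\la^*(\fc(D'))$, canceling the $\la(C)\!\otimes\!\la^*(C)$ pair, and reassembling the cokernel dual lines in the correct order all introduce sign factors depending on $\fd(K)$, $\fd(C)$, $\fd(Q)$, and $\ind\,D'$. Getting a clean closed-form expression (as in~\eref{cUDDdfn_e}) that makes the subsequent Compositions and Dual Exact Triples properties come out cleanly is the delicate part; the underlying existence of \emph{some} canonical isomorphism, however, is an immediate consequence of Lemma~\ref{ses_lmm} applied four times to the snake-lemma decomposition.
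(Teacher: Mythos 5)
Your proposal is correct and follows essentially the same route as the paper's proof: the Snake Lemma six-term sequence is split into four short exact sequences, Lemma~\ref{ses_lmm} is applied to each, and the intermediate factors are cancelled by evaluation pairings --- exactly the identifications~\eref{sesisom_e} and~\eref{SnakeLmm_e}. One notational slip worth fixing: the quotient of $\fc(D')$ by $Q=\Im\,\partial$ is $\Im\,\fI_Y$ (a fourth intermediate space you did not name), not $Q$ again, and it is this space rather than $Q$ that is the kernel of $\fc(D)\!\lra\!\fc(D'')$ in your fourth sequence; with that relabeling your outline matches the paper's argument verbatim.
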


\begin{proof} 
By the Snake Lemma, \eref{cTdiag_e} induces an exact sequence 
\BE{ETles_e} 0\lra \ka(D')\stackrel{\fI_X}{\lra} \ka(D) \stackrel{\fJ_X}{\lra} \ka(D'')
\stackrel{\de}{\lra} \fc(D')  \stackrel{\fI_Y}{\lra} \fc(D)
\stackrel{\fJ_Y}{\lra} \fc(D'')\lra0.\EE
By Lemma~\ref{ses_lmm}, there are then natural isomorphisms
\BE{sesisom_e}\begin{aligned}
\la(\ka(D))&\approx \la(\ka(D'))\otimes\la(\Im\,\fJ_X), &\qquad
\la(\ka(D''))&\approx \la(\Im\,\fJ_X)\otimes\la(\Im\,\de), \\
\la(\fc(D'))&\approx \la(\Im\,\de)\otimes\la(\Im\,\fI_Y), &\qquad
\la(\fc(D))&\approx \la(\Im\,\fI_Y)\otimes\la(\fc(D'')).
\end{aligned}\EE
Putting these isomorphisms together and using the natural evaluation isomorphisms, we obtain
\BE{SnakeLmm_e}\begin{split}
\la(D')\otimes \la(D'') &\equiv 
\la(\ka(D'))\otimes\la^*(\fc(D')) \otimes
\la(\ka(D''))\otimes  \la^*(\fc(D''))\\
&\approx \la(\ka(D))\otimes\la^*(\Im\,\fJ_X) 
\otimes\la^*(\Im\,\fI_Y)\otimes\la^*(\Im\,\de)\\
&\quad \otimes
\la(\Im\,\fJ_X)\otimes\la(\Im\,\de)
\otimes \la^*(\fc(D))\otimes \la(\Im\,\fI_Y)
\approx \la(\ka(D))\otimes \la^*(\fc(D)).
\end{split}\EE
This establishes the claim. 
\end{proof}

\noindent
For computational purposes, it is essential to specify the isomorphism of 
Proposition~\ref{QuillenIsom_prp} explicitly. 
With the notation as in~\eref{cTdiag_e} and \eref{ETles_e},
let
\BE{signdfn_e}\ep_{\ft}=(\ind\,D'')\fd(\fc(D'))+\fd(\fc(D))\fd(\Im\,\de).\EE
For $\ft$ corresponding to~\eref{cTdiag_e}, we define 
\BE{cUDDdfn_e}\begin{split}
&\Psi_{\ft}\big( x\!\otimes\!(\la(\de)v\w_{\fc(D')}\!w)^*\otimes
(\la(\fJ_X)u\w_{\ka(D'')}\!v)\!\otimes\!(\la(\fJ_Y)y)^* \big)\\
&\hspace{2in}=
(-1)^{\ep_{\ft}} \big(\la(\fI_X)x\!\w_{\ka(D)}\!u\big)\otimes 
\big(\la(\fI_Y)w\w_{\fc(D)}\!y\big)^*,
\end{split}\EE
whenever
\begin{gather*}
x\!\in\!\la(\ka(D')),~
u\!\in\!\la\bigg(\frac{\ka(D)}{\fI_X(\ka(D'))}\bigg),~
v\!\in\!\la\bigg(\frac{\ka(D'')}{\fJ_X(\ka(D))}\bigg),~
w\!\in\!\la\bigg(\frac{\fc(D')}{\de(\ka(D''))}\bigg),
~y\!\in\!\la\bigg(\frac{\fc(D)}{\fI_Y(\fc(D'))}\bigg),\\
x,u,v,w,y\neq0.
\end{gather*}
In particular, $\Psi_{\ft}$ satisfies the Naturality~II and Normalization~II,III  properties.
By~\eref{Cdualpair_e2}, it also satisfies the Complex Exact Triples property.

\begin{rmk}\label{4term_rmk}
If $\de\!:V\!\lra\!W$ is a homomorphism between finite-dimensional vector spaces,
the isomorphism~\eref{cUDDdfn_e} applied to the exact sequence
\BE{4term_les}
0\lra0 \lra \ka(\de)\lra V\stackrel{\de}{\lra} W\stackrel{q}{\lra} \fc(\de)\lra0\lra0\EE
induces the isomorphism
$$\Psi_{\de}\!: \la^*(W)\otimes\la(V)\lra \la(\de), \qquad
\Psi_{\de}(\be\otimes x)=
\Psi_{\ft_{\de}}(1\!\otimes\!\be\otimes x\!\otimes\!1^*),$$
where $\Psi_{\ft_{\de}}$ is the isomorphism~\eref{cUDDdfn_e} for the exact 
sequence~\eref{4term_les}.
Explicitly,
\begin{gather*}
\Psi_{\de}\big((\la(\de)v\!\w_W\!w)^*\otimes(u\!\w_V\!v)\big)
=(-1)^{\fd(V)\fd(W)+(\fd(W)-\fd(\fc(\de)))\fd(\fc(\de))}u\otimes w^*,\\
\hbox{if}\qquad
u\in\la(\ka(\de))-0,\quad v\in\la(V/\ka(\de))-0, \quad w\in\la(\fc(\de))-0.
\end{gather*}
Thus,
\begin{equation*}\begin{split}
\Psi_{\bf0}\circ\Psi_{\de}^{-1}\!:\la(\de)&\lra\la({\bf0})\equiv\la(V)\otimes\la^*(W),\\
u\otimes w^*&\lra  (-1)^{(\fd(W)-\fd(\fc(\de)))\fd(\fc(\de))}
(u\!\w_V\!v)\otimes (\la(\de)v\!\w_W\!w)^*,
\end{split}\end{equation*}
is precisely the isomorphism~\eref{detisom_e2}.
\end{rmk}

\noindent 
For any $D'\!\in\!\cF(X',Y')$ and $D''\!\in\!\cF(X'',Y'')$, let
$$\wt\oplus_{D',D''}\!:\,\la(D')\otimes\la(D'')\lra\la(D'\!\oplus\!D'')$$
be the isomorphism $\Psi_{\ft}$
in~\eref{cUDDdfn_e} corresponding to the diagram~\eref{sumEX_e}.
Thus,
\BE{cUDDdfn_e1}\begin{split}
&\wt\oplus_{D',D''}\big( (x_1'\!\w\!\ldots\!\w\!x_{k'}')\!\otimes\!  
(y_1'\!\w\!\ldots\!\w\!y_{\ell'}')^*\otimes 
(x_1''\!\w\!\ldots\!\w\!x_{k''}'')\!\otimes\!(y_1''\!\w\!\ldots\!\w\!y_{\ell''}'')^*\big)\\
&\hspace{.2in}=(-1)^{(\ind D'')\fd(\fc(D'))} 
\big((x_1',0)\!\w\!\ldots\!\w\!(x_{k'}',0)\w
(0,x_1'')\!\w\!\ldots\!\w\!(0,x_{k''}'')\big)\\
&\hspace{1.8in}\otimes
\big((y_1',0)\!\w\!\ldots\!\w\!(y_{\ell'}',0)\w
(0,y_1'')\!\w\!\ldots\!\w\!(0,y_{\ell''}'')\big)^*,
\end{split}\EE 
whenever
\begin{alignat*}{2}
x_1'\!\w\!\ldots\!\w\!x_{k'}'&\in \la(\ka(D'))-0,&\qquad 
y_1'\!\w\!\ldots\!\w\!x_{\ell'}'&\in \la(\fc(D'))-0,\\
x_1''\!\w\!\ldots\!\w\!x_{k''}''&\in \la(\ka(D''))-0,&\qquad 
y_1''\!\w\!\ldots\!\w\!y_{\ell''}''&\in \la(\fc(D''))-0.
\end{alignat*}
The two Direct Sums properties on page~\pageref{DirSumI_prop} follow 
immediately from~\eref{cUDDdfn_e1}.\\

\noindent
The isomorphism
\BE{cDdfn_e}\wt\cD_D\!:\la(D)\lra\la(D^*), \qquad
x\!\otimes\!\al\lra (-1)^{(\ind D)\fd(\fc(D))}
 \la(\cD_D)(\cP\al) \otimes \cP\big(\la(\cD_D)x\big),\EE
satisfies the Normalization~IV$^{\star}$ property on page~\pageref{NormalIVst_prop}.
By~\eref{Cdualpair_e2}, it satisfies the Dual Complex Orientations properties as~well.
The next proposition shows that it also satisfies the Dual Exact Triples property.  
The extra factor of $(-1)^{\fd(\fc(D))}$ in~\eref{cDdfn_e}
arises for the same reason as in the paragraph containing~\eref{detisom_e3}.
Due to this extra factor, the compositions of $\wt\cD_D$ with $\wt\cD_{D^*}$
are the multiplication by $(-1)^{\ind D}$, not necessarily the identity,
whenever the Banach spaces $X$ and $Y$ are reflexive.

\begin{prp}[Dual Exact Triples]\label{CompETdual_prp}
For every exact triple~\eref{cTdiag_e} of Fredholm operators,
the diagram~\eref{compdiagdual_e} determined by 
the isomorphisms~\eref{cUDDdfn_e} and~\eref{cDdfn_e} commutes.
\end{prp}

\begin{proof} 
With notation as in~\eref{cTdiag_e} and~\eref{compdiagdual_e}, we define
\begin{equation*}\begin{split}
\ep_{\tnL}&=(\ind\,D')(\ind\,D'')+(\ind\,D')\fd(\fc(D'))+(\ind\,D'')\fd(\fc(D''))
+\ep_{\ft^*}, \quad
\ep_{\tnR}=
\ep_{\ft}+(\ind\,D)\fd(\fc(D)).
\end{split}\end{equation*}
The isomorphisms~\eref{fkfcdual_e} intertwine the analogue of 
the exact sequence~\eref{ETles_e} for~$\ft^*$ and 
the dual of~\eref{ETles_e}:
\BE{dualles_e}\begin{split}
\xymatrix{0\ar[r]& \ka(D''^*)\ar[r]^{\fJ_Y^*}& \ka(D^*)\ar[r]^{\fI_Y^*}& 
\ka(D'^*)\ar[r]^{\de^*}& \fc(D''^*)  \ar[r]^{\fJ_X^*}\ar[d]^{\cD_{D''}^*}&
\fc(D^*)\ar[r]^{\fI_X^*}\ar[d]^{\cD_D^*}& \fc(D'^*)\ar[r]\ar[d]^{\cD_{D'}^*}& 0\\
0\ar[r]& \fc(D'')^*\ar[r]^{\fJ_Y^*}\ar[u]_{\cD_{D''}}& 
\fc(D)^*\ar[r]^{\fI_Y^*}\ar[u]_{\cD_D}& \fc(D')^*\ar[r]^{\de^*}\ar[u]_{\cD_{D'}}& 
\ka(D'')^* \ar[r]^{\fJ_X^*}& \ka(D)^*\ar[r]^{\fI_X^*}& \ka(D')^*\ar[r]& 0 \,.}
\end{split}\EE
In particular,
$$\fd(\Im\,\de^*)=\fd(\Im\,\de)=\fd(\ka(D'))+\fd(\ka(D''))+\fd(\ka(D))
=\fd(\fc(D'))+\fd(\fc(D''))+\fd(\fc(D))$$
and so $2|(\ep_{\tnL}\!-\!\ep_{\tnR})$.\\

\noindent
Let $x,u,v,w,y$ be as in~\eref{cUDDdfn_e}.
By \eref{dualles_e}, we can compute $\Psi_{\ft^*}$ using  
\begin{equation*}\begin{aligned}
\ch{x}&=\la(\cD_{D''})\cP\big((\la(\fJ_Y)y)^*\big)\in\la\big(\ka(D''^*)\big), &
\ch{u}&=\la(\cD_D)\cP\big((\la(\fI_Y)w)^*\big)\in\la\bigg(\frac{\ka(D^*)}{\fJ_Y^*(\ka(D''^*))}\bigg),\\
\ch{v}&=\la(\cD_{D'})\cP\big((\la(\de)v)^*\big)
\in\la\bigg(\frac{\ka(D'^*)}{\io_Y^*(\ka(D^*))}\bigg), &
\ch{w}&=\la(\cD_{D''}^*)^{-1}\cP\big((\la(\fJ_X)u)^*\big)
\in\la\bigg(\frac{\fc(D''^*)}{\de^*(\ka(D'^*))}\bigg),\\
\ch{y}&=\la(\cD_D^*)^{-1}\cP\big((\la(\fI_X)x)^*\big)
\in\la\bigg(\frac{\fc(D^*)}{\fJ_X^*(\fc(D''^*))}\bigg).
\end{aligned}\end{equation*}
By \eref{sesdual_e1}, \eref{sesdual_e2}, and the commutativity 
of the diagram~\eref{dualles_e}, 
\begin{gather}
\label{xyrel_e}
\cP\big(\la(\cD_{D'})x\big)=\big(\la(\fI_X^*)\ch{y}\big)^*,\\
\begin{aligned}
\la(\cD_{D'})\cP(w^*)&=\la(\fI_Y^*)\ch{u}, &  
\la(\cD_{D'})\la(\de^*)^{-1}\cP(v^*)&=\ch{v}, \notag\\
\la(\cD_D)\cP(y^*)&=\la(\fJ_Y^*)\ch{x},  &
\la(\cD_D)\la(\fI_Y^*)^{-1}\cP(w^*)&=\ch{u}, \notag
\end{aligned}\\
\begin{aligned}
\la(\cD_{D''})\la(\fJ_X)u&=\cP(\ch{w}^*),  &\quad
\la(\cD_{D''})v&=\la(\de)^{-1}\cP(\ch{v}^*) \notag\\
\la(\cD_D)\la(\fI_X)x&=\cP(\ch{y}^*),&\quad
\la(\cD_D)u&=\la(\fJ_X)^{-1}\cP(\ch{w}^*). \notag
\end{aligned}
\end{gather}
Combining each pair of identities on the last four lines above with
\eref{sesdual_e3}, we obtain
\begin{alignat}{1}
\label{CompETdual_e5a}
\la(\cD_{D'})\cP\big((\la(\de)v\w_{\fc(D')}\!w)^*\big)
&=\la(\fI_Y^*)\ch{u}\w_{\ka(D'^*)}\!\ch{v},\\
\label{CompETdual_e5b}
\la(\cD_D)\cP\big((\la(\fI_Y)w\w_{\fc(D)}\!y)^*\big)
&=\la(\fJ_Y^*)\ch{x}\w_{\ka(D^*)}\!\ch{u},\\
\label{CompETdual_e5c}
\cP\big(\la(\cD_{D''})(\la(\fJ_X)u\w_{\ka(D'')}\!v)\big)&=
\big(\la(\de^*)\ch{v}\w_{\fc(D''^*)}\!\ch{w}\big)^*\,,\\
\label{CompETdual_e5d}
\cP\big(\la(\cD_D)(\la(\fI_X)x\w_{\ka(D)}\!u)\big)&=
\big(\la(\fJ_X^*)\ch{w}\w_{\fc(D^*)}\!\ch{y}\big)^*,
\end{alignat}
respectively. 
By \eref{cDdfn_e}, \eref{xyrel_e}, \eref{CompETdual_e5a}, \eref{CompETdual_e5c},
and \eref{cUDDdfn_e},  the image~of 
\BE{CompETdual_e9} x\!\otimes\!(\la(\de)v\w_{\fc(D')}\!w)^*\otimes
(\la(\fJ_X)u\w_{\ka(D'')}\!v)\!\otimes\!(\la(\fJ_Y)y)^*\in\la(D')\otimes\la(D'')\EE
under $\Psi_{\ft^*}\!\circ\!\wt\cD_{D''}\!\otimes\!\wt\cD_{D'}\!\circ\!R$ is
$$(-1)^{\ep_{\tnL}} \big(\la(\fJ_Y^*)\ch{x}\w_{\ka(D^*)}\!\ch{u}\big) \otimes
\big(\la(\fJ_X^*)\ch{w}\w_{\fc(D^*)}\!\ch{y}\big)^*
\in \la(D^*).$$
By  \eref{cUDDdfn_e}, \eref{cDdfn_e}, \eref{CompETdual_e5b}, 
and \eref{CompETdual_e5d}, the image~of the element~\eref{CompETdual_e9}
under $\wt\cD_D\!\circ\!\Psi_{\ft}$ is
$$(-1)^{\ep_{\tnR}} \big(\la(\fJ_Y^*)\ch{x}\w_{\ka(D^*)}\!\ch{u}\big) \otimes
\big(\la(\fJ_X^*)\ch{w}\w_{\fc(D^*)}\!\ch{y}\big)^*
\in \la(D^*).$$
Since $2|(\ep_{\tnL}\!-\!\ep_{\tnR})$, this establishes the claim.
\end{proof}

\noindent
For any $D_1\!\in\!\cF(X_1,X_2)$ and $D_2\!\in\!\cF(X_2,X_3)$, let
$$\wt\cC_{D_1,D_2}\!:\,\la(D_1)\otimes\la(D_2)\lra\la(D_2\!\circ\!D_1)$$
be the isomorphism $\Psi_{\ft}$
in~\eref{cUDDdfn_e} corresponding to the diagram~\eref{compEX_e}.
The exact sequence~\eref{ETles_e} in this case specializes~to
\begin{gather}\label{ETles_e2} 
0\lra \ka(D_1)\lra \ka(D_2\!\circ\!D_1) \stackrel{D_1}{\lra} \ka(D_2)
\stackrel{\de}{\lra} \fc(D_1)  \stackrel{D_2}{\lra} \fc(D_2\!\circ\!D_1)\lra \fc(D_2)\lra0,\\
\de(x_2)=-x_2+\Im\,D_1.\notag
\end{gather}
Let
$$\ep_{D_1,D_2}=(\ind\,D_2)\fd(\fc(D_1))+
\big(\fd(\fc(D_1))+\fd(\fc(D_2))\big)\fd(\Im\,\de).$$
Then,
\BE{cUDDdfn_e2}\begin{split}
&\wt\cC_{D_1,D_2}\big( x_1\!\otimes\!(v\w_{\fc(D_1)}\!w)^*
\otimes
(\la(D_1)u\w_{\ka(D_2)}\!v)\!\otimes\!y_2^* \big)\\
&\hspace{1.5in}=
(-1)^{\ep_{D_1,D_2}} (x_1\!\w_{\ka(D_2\circ D_1)}\!u)\otimes 
(\la(D_2)w\w_{\fc(D_2\circ D_1)}\!y_2)^*,
\end{split}\EE
whenever
\begin{gather*}
x_1\!\in\!\la(\ka(D_1))-0,\quad y_2\!\in\!\la(\fc(D_2))-0,\quad
u\!\in\!\la\bigg(\frac{\ka(D_2\!\circ\!D_1)}{\ka(D_1)}\bigg)-0,\\
v\!\in\!\la\bigg(\frac{\ka(D_2)}{\ka(D_2)\cap(\Im\, D_1)}\bigg)-0,\quad
w\!\in\!\la\bigg(\frac{X_2}{\ka(D_2)+(\Im\,D_1)}\bigg)-0.
\end{gather*}

\begin{prp}[Compositions~I, {\cite[Proposition D.2.6]{Huang}}]\label{CompET_prp}
For any triple of Fredholm operators $D_1\!:X_1\!\lra\!X_2$, $D_2\!:X_2\!\lra\!X_3$, 
and $D_3\!:X_3\!\lra\!X_4$, the diagram~\eref{compdiag_e1}
induced by the isomorphisms~\eref{cUDDdfn_e} commutes.
\end{prp}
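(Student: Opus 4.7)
My plan is to verify the commutativity of~\eqref{compdiag_e1} by direct computation using the explicit formula~\eqref{cUDDdfn_e2} along each of the four arrows, and then separately verifying (a)~that the underlying wedge-product elements agree and (b)~that the four sign exponents sum, along the two paths, to the same value modulo~$2$.

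First I would fix a nonzero element $x_1\!\otimes\!\beta_1\otimes x_2\!\otimes\!\beta_2\otimes x_3\!\otimes\!\beta_3\in\la(D_1)\otimes\la(D_2)\otimes\la(D_3)$ written in the form appearing on the left-hand side of~\eqref{cUDDdfn_e2} for each pair $(D_i,D_{i+1})$, using bases chosen compatibly with the two interlocking filtrations
$$ \ka(D_1)\subset \ka(D_2\!\circ\!D_1)\subset \ka(D_3\!\circ\!D_2\!\circ\!D_1)
\qquad\hbox{and}\qquad \ka(D_1)\subset \ka(D_1) \subset \ka(D_3\!\circ\!D_2\!\circ\!D_1)$$
(the second arising via $\ka(D_3\!\circ\!D_2)$) and dually for the cokernels. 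Applying \eqref{cUDDdfn_e2} twice along the left-bottom path and twice along the top-right path produces two elements of $\la(D_3\!\circ\!D_2\!\circ\!D_1) = \la(\ka(D_3\!\circ\!D_2\!\circ\!D_1))\otimes\la^*(\fc(D_3\!\circ\!D_2\!\circ\!D_1))$ whose underlying wedge products involve the iterated compositions of the maps $\w_{V}$ of Lemma~\ref{ses_lmm} over the four short exact sequences obtained by splicing the long exact sequences~\eqref{ETles_e2} for $(D_1,D_2)$, $(D_2\!\circ\!D_1,D_3)$, $(D_2,D_3)$, and $(D_1,D_3\!\circ\!D_2)$.

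For the wedge-product identification I would invoke Corollary~\ref{ses_crl} repeatedly: the two orders in which one builds up $\la(\ka(D_3\!\circ\!D_2\!\circ\!D_1))$ from $\la(\ka(D_1))$, the successive quotients, and the connecting-map contributions correspond precisely to the two square-diagrams obtained by splitting a three-step filtration in two different ways, for which the graded commutativity of Corollary~\ref{ses_crl} (a special case of Lemma~\ref{ses_lmm2}) applies. A symmetric argument handles the cokernel factor. Thus the two wedge-product outputs coincide as elements of $\la(\ka(D_3\!\circ\!D_2\!\circ\!D_1))\otimes\la^*(\fc(D_3\!\circ\!D_2\!\circ\!D_1))$.

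The remaining, and I expect by far the hardest, step is the sign identity
$$ \ep_{D_1,D_2}+\ep_{D_2\circ D_1,D_3}\,\equiv\, \ep_{D_2,D_3}+\ep_{D_1,D_3\circ D_2}\pmod 2,$$
where each $\ep_{\cdot,\cdot}$ is given by~\eqref{signdfn_e} specialized as in~\eqref{cUDDdfn_e2}. Writing $i_j\!=\!\ind D_j$, $c_j\!=\!\fd(\fc(D_j))$, and letting $r_{jk}\!=\!\fd(\Im\,\de_{j,k})$ denote the rank of the connecting map for the composition of $D_j$ and $D_k$, one first records the additivity relations $r_{jk}+\fd(\fc(D_k\!\circ\!D_j))=c_j+c_k$ and $\fd(\ka(D_k\!\circ\!D_j))=\fd(\ka(D_j))+\fd(\ka(D_k))-r_{jk}$ coming from~\eqref{ETles_e2}, together with the three-term identity
$$ r_{12}+r_{(12)3}\,\equiv\,r_{23}+r_{1(23)}\pmod 2,$$
which itself follows from counting dimensions of the kernel and cokernel of $D_3\!\circ\!D_2\!\circ\!D_1$ by the two different filtrations. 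Substituting these relations into the four $\ep$-terms reduces both sides to the same symmetric function of $i_1,i_2,i_3,c_1,c_2,c_3$ and the $r_{jk}$'s, completing the proof. This sign-chase is a bit delicate but entirely mechanical; the underlying conceptual content is entirely captured by Corollary~\ref{ses_crl} and the snake lemma applied to the three two-step composites.
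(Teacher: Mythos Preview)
Your proposed separation into (a)~a wedge-product equality proved via Corollary~\ref{ses_crl} and (b)~a stand-alone sign identity
\[
\ep_{D_1,D_2}+\ep_{D_2\circ D_1,D_3}\;\equiv\;\ep_{D_2,D_3}+\ep_{D_1,D_3\circ D_2}\pmod2
\]
does not work, because neither half holds on its own.

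For~(a): Corollary~\ref{ses_crl} is the \emph{sign-free} special case of Lemma~\ref{ses_lmm2} (one of $V_{\tnT\tnR}$, $V_{\tnB\tnL}$ vanishes). It does handle the two associativity-type identifications you mention, for instance building $\la(\ka(D_3D_2D_1))$ from $\la(\ka(D_1))$ in two orders. But the two paths in~\eqref{compdiag_e1} also require relating elements such as $\la(D_1)x_2\w_{\ka(D_3D_2)}u_{2,3}$ to $\la(D_1)u_{1,23}\w_{\ka(D_3D_2)}v_{1,23}$, where the auxiliary pieces $u_{2,3},v_{1,23},\ldots$ live in \emph{different} quotient spaces for the two orderings. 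Making these compatible requires the full Lemma~\ref{ses_lmm2} applied to two genuine $3\times3$ grids of short exact sequences (the paper's Figure~\ref{CompDiags_fig}), and those applications introduce extra graded-swap signs $(-1)^{\lr{u_{12,3}}\lr{v_{1,2}}}$ and $(-1)^{\lr{v_{2,3}}\lr{w_{1,23}}}$; see~\eqref{switchrel_e}.

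For~(b): writing $\ep_{\tnL}$ and $\ep_{\tnR}$ for your two sides, direct expansion (the paper's~\eqref{sign_e}) gives
\[
\ep_{\tnL}-\ep_{\tnR}\;\equiv\;\lr{u_{12,3}}\lr{v_{1,2}}+\lr{v_{2,3}}\lr{w_{1,23}}\pmod2,
\]
which is not zero in general. Your rank identity $r_{12}+r_{(12)3}=r_{23}+r_{1(23)}$ is correct and does force the terms multiplied by $C=\fc_1+\fc_2+\fc_3$ to match, but the residual cross-terms above survive---and are cancelled precisely by the extra signs coming from~(a).

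This is exactly the gap flagged in Remark~\ref{Huang_rmk} about Huang's original argument. Your outline has the right shape; the fix is to carry the $R$-signs from Lemma~\ref{ses_lmm2} through the wedge computation rather than quarantining all signs into a separate $\ep$-identity.
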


\begin{proof} We denote 
by $D''D'$ the composition $D''\!\circ\!D'$ of two maps~$D'$ and~$D''$ and define
$$\ep_{\tnL}= \ep_{D_1,D_2}+\ep_{D_2D_1,D_3}\,,\qquad
\ep_{\tnR}= \ep_{D_2,D_3}+\ep_{D_1,D_3D_2}\,.$$
For $i\!=\!1,2,3$, let
$$x_i\!\in\!\la(\ka(D_i))-0 \qquad\hbox{and}\qquad 
y_i\!\in\!\la(\fc(D_i))-0.$$
For $(i,j)\in\{(1,2),(2,3),(1,23),(12,3)\}$, let
$$u_{i,j}\in\la\bigg(\frac{\ka(D_jD_i)}{\ka(D_i)}\bigg)-0,\quad
v_{i,j}\in\la\bigg(\frac{\ka(D_j)}{\ka(D_j)\cap\Im(D_i)}\bigg)-0,\quad
w_{i,j}\in\la\bigg(\frac{X_j}{\ka(D_j)+\Im(D_i)}\bigg)-0,$$
where $D_{12}\!=\!D_2D_1$, $D_{23}\!=\!D_3D_2$, and $X_{23}\!=\!X_2$;
see Figure~\ref{CompDiags_fig}. 
Below we choose these elements in a compatible~way.\\

\noindent
Applying Lemma~\ref{ses_lmm} to the exact sequence~\eref{ETles_e2} with $D_1$ and $D_2$ 
replaced by $D_i$ and $D_j$ with $(i,j)$ as above, we obtain
\begin{alignat*}{2}
\fd(\ka(D_3))&=\lr{u_{12,3}}+\lr{v_{12,3}}, &\qquad
\fd(\fc(D_1))&=\lr{v_{1,23}}+\lr{w_{1,23}}\,,\\
\fd(\fc(D_jD_i))&=\fd(\fc(D_i))+\fd(\fc(D_j))-\lr{v_{i,j}},&\qquad
\ind\,D_jD_i&=\ind\,D_i+\ind\,D_j\,,
\end{alignat*}
where $(i,j)\!=\!(1,2),(2,3)$.
From this, we find that 
\BE{sign_e}\begin{split}
\ep_{\tnL}&=A+C(\lr{v_{1,2}}\!+\!\lr{v_{12,3}})
+\lr{u_{12,3}}\lr{v_{1,2}} \mod2,\\
\ep_{\tnR}&=A+C(\lr{v_{2,3}}\!+\!\lr{v_{1,23}})
+\lr{v_{2,3}}\lr{w_{1,23}}\mod2,
\end{split}\EE
where
$$A=(\ind\,D_3D_2)\cdot\fd(\fc(D_1))+(\ind\,D_3)\cdot\fd(\fc(D_2)),
\quad C=\fd(\fc(D_1))+\fd(\fc(D_2))+\fd(\fc(D_3)).$$\\

\noindent
In light of the top row in the first diagram in Figure~\ref{CompDiags_fig}, 
the bottom row in the second diagram,
and Lemma~\ref{ses_lmm}, we can take 
\BE{uwrel_e} u_{1,23}=u_{1,2}\w_{\frac{\ka(D_3D_2D_1)}{\ka(D_1)}}u_{12,3}
\qquad\hbox{and}\qquad
w_{12,3}=\la(D_2)w_{1,23}\w_{\frac{X_3}{\ka(D_3)+\Im(D_2D_1)}}w_{2,3}\,.\EE
Along with Corollary~\ref{ses_crl}, these equalities insure that 
\BE{value_e}\begin{split}
&\big((x_1\!\w_{\ka(D_2D_1)}u_{1,2})\w_{\ka(D_3D_2D_1)}u_{12,3}\!\big)\otimes
\big(\la(D_3)w_{12,3}\w_{\fc(D_3D_2D_1)}\!y_3\big)^*\\
&\hspace{.3in}=\big(x_1\w_{\ka(D_3D_2D_1)}\!u_{1,23}\big)\otimes
\big(\la(D_3D_2)w_{1,23}\w_{\fc(D_3D_2D_1)}(\la(D_3)w_{2,3}\w_{\fc(D_3D_2)}y_3\!)\big)^*
\end{split}\EE
in $\la(D_3D_2D_1)$.
In light of the right column and bottom row in the first diagram in Figure~\ref{CompDiags_fig},
the top row and left column in the second diagram in Figure~\ref{CompDiags_fig}, 
and Lemma~\ref{ses_lmm}, we can take 
\BE{munew_e}\begin{aligned}
u_{2,3}&=\la(D_1)u_{12,3}\w_{\frac{\ka(D_3D_2)}{\ka(D_2)}}\mu,&\qquad
v_{1,23}&=v_{1,2}\w_{\frac{\ka(D_3D_2)}{\ka(D_3D_2)\cap\Im(D_1)}}\mu,\\
v_{12,3}&=\la(D_2)\mu\w_{\frac{\ka(D_3)}{\ka(D_3)\cap\Im(D_2D_1)}}v_{2,3}, &\qquad
w_{1,2}&=\mu\w_{\frac{X_2}{\ka(D_2)+\Im(D_1)}}w_{1,23}
\end{aligned}\EE
for some
$$\mu\in\la\bigg(\frac{\ka(D_3D_2)}{\ka(D_2)+\ka(D_3D_2)\cap\Im(D_1)}\bigg)-0.$$\\

\begin{figure}
\begin{gather*}
\xymatrix{ & 0\ar[d] & 0\ar[d] & 0\ar[d] &\\ 
0\ar[r]& \frac{\ka(D_2D_1)}{\ka(D_1)}\ar[d]^{D_1}\ar[r]& 
\frac{\ka(D_3D_2D_1)}{\ka(D_1)}\ar[d]^{D_1}\ar[r]& 
\frac{\ka(D_3D_2D_1)}{\ka(D_2D_1)}\ar[d]^{D_1}\ar[r]& 0\\
0\ar[r]& \ka(D_2) \ar[r]\ar[d]& \ka(D_3D_2) \ar[r]\ar[d]& 
\frac{\ka(D_3D_2)}{\ka(D_2)}\ar[r]\ar[d]& 0\\
0\ar[r]& \frac{\ka(D_2)}{\ka(D_2)\cap\Im(D_1)}\ar[r]\ar[d]&
\frac{\ka(D_3D_2)}{\ka(D_3D_2)\cap\Im(D_1)}\ar[r]\ar[d]&
\frac{\ka(D_3D_2)}{\ka(D_2)+\ka(D_3D_2)\cap\Im(D_1)}\ar[r]\ar[d]& 0\\
&0&0&0& \\
& 0\ar[d] & 0\ar[d] & 0\ar[d] &\\ 
0\ar[r]& \frac{\ka(D_3D_2)}{\ka(D_2)+\ka(D_3D_2)\cap\Im(D_1)} \ar[r]^>>>>>>{D_2}\ar[d]&
\frac{\ka(D_3)}{\ka(D_3)\cap\Im(D_2D_1)}\ar[r]\ar[d]& 
\frac{\ka(D_3)}{\ka(D_3)\cap\Im(D_2)}\ar[r]\ar[d]& 0\\
0\ar[r]& \frac{X_2}{\ka(D_2)+\Im(D_1)}\ar[r]^{D_2}\ar[d]&
\fc(D_2D_1)\ar[r]\ar[d]& \fc(D_2)\ar[r]\ar[d]& 0\\
0\ar[r]& \frac{X_2}{\ka(D_3D_2)+\Im(D_1)}\ar[r]^{D_2}\ar[d]&
\frac{X_3}{\ka(D_3)+\Im(D_2D_1)}\ar[r]\ar[d]& 
\frac{X_3}{\ka(D_3)+\Im(D_2)}\ar[d]\ar[r]& 0\\
&0&0&0& }
\end{gather*}
\rput{45}(3.3,12.4){$u_{1,2}\ltimes$}\rput{45}(7.3,12.3){$u_{1,23}\ltimes$}
\rput{45}(11.1,12.3){$u_{12,3}\ltimes$}
\rput{45}(3.5,10.8){$x_2\ltimes$}\rput{45}(11.4,10.8){$u_{2,3}\ltimes$}
\rput{45}(2.9,9){$v_{1,2}\ltimes$}\rput{45}(6.7,8.9){$v_{1,23}\ltimes$}
\rput{45}(10.4,9.1){$\mu\ltimes$}
\rput{45}(2.3,4.8){$\mu\ltimes$}\rput{45}(6.7,4.7){$v_{12,3}\ltimes$}
\rput{45}(11,4.7){$v_{2,3}\ltimes$}
\rput{45}(2.9,3){$w_{1,2}\ltimes$}\rput{45}(11.4,3.2){$y_2\ltimes$}
\rput{45}(2.6,1.4){$w_{1,23}\ltimes$}\rput{45}(6.7,1.4){$w_{12,3}\ltimes$}
\rput{45}(10.9,1.4){$w_{2,3}\ltimes$}
\caption{Commutative diagrams of exact sequences used in the proof of Proposition~\ref{CompET_prp}}
\label{CompDiags_fig}
\end{figure}

\noindent
In light of the left column of the first diagram and the right column of 
the second diagram in Figure~\ref{CompDiags_fig}, \eref{munew_e}, and Corollary~\ref{ses_crl},
we can take
\BE{xydfn_e}\begin{aligned}
x_2&=\la(D_1)u_{1,2}\w_{\ka(D_2)}v_{1,2}, &\quad
x_3&=\la(D_2D_1)u_{12,3}\w_{\ka(D_3)}v_{12,3}
=\la(D_2)u_{2,3}\w_{\ka(D_3)}v_{2,3}\,,\\
y_2&=v_{2,3}\w_{\fc(D_2)}w_{2,3}, &\quad
y_1&=v_{1,2}\w_{\fc(D_1)}w_{1,2}=v_{1,23}\w_{\fc(D_1)}w_{1,23}\,.
\end{aligned}\EE
Combining the above definitions of $x_2$ and $y_2$ with \eref{munew_e} and 
applying Lemma~\ref{ses_lmm2} to the two diagrams in Figure~\ref{CompDiags_fig}, we find that 
\BE{switchrel_e}\begin{split}
\la(D_1)x_2\w_{\ka(D_3D_2)}u_{2,3}&=
(-1)^{\lr{u_{12,3}}\lr{v_{1,2}}}\la(D_1)u_{1,23}\w_{\ka(D_3D_2)}v_{1,23}\,,\\
\la(D_2)w_{1,2}\w_{\fc(D_2D_1)}y_2&=
(-1)^{\lr{v_{2,3}}\lr{w_{1,23}}}v_{12,3}\w_{\fc(D_2D_1)}w_{12,3}.
\end{split}\EE
By \eref{cUDDdfn_e2},   \eref{xydfn_e}, and~\eref{switchrel_e}, the images of 
$$ x_1\!\otimes\!y_1^*\otimes x_2\!\otimes\!y_2^*\otimes x_3\!\otimes\!y_3^*
\in \la(D_1)\otimes\la(D_2)\otimes\la(D_3)$$
under $\wt{C}_{D_2\circ D_1,D_3}\circ\wt{C}_{D_1,D_2}\!\otimes\!\id$ and
$\wt{C}_{D_1,D_3\circ D_2}\circ\id\!\otimes\!\wt{C}_{D_2,D_3}$ are
\begin{equation*}\begin{split}
&(-1)^{\ep_{\tnL}+\lr{v_{2,3}}\lr{w_{1,23}}}
\big((x_1\!\w_{\ka(D_2D_1)}u_{1,2})\w_{\ka(D_3D_2D_1)}u_{12,3}\!\big)\otimes
\big(\la(D_3)w_{12,3}\w_{\fc(D_3D_2D_1)}\!y_3\big)^*
\quad\hbox{and}\\
&(-1)^{\ep_{\tnR}+\lr{u_{12,3}}\lr{v_{1,2}}}
\big(x_1\w_{\ka(D_3D_2D_1)}\!u_{1,23}\big)\otimes
\big(\la(D_3D_2)w_{1,23}\w_{\fc(D_3D_2D_1)}(\la(D_3)w_{2,3}\w_{\fc(D_3D_2)}y_3\!)\big)^*,
\end{split}\end{equation*}
respectively.
By \eref{sign_e}, the second and third identities in~\eref{munew_e},  and \eref{value_e},
these two elements of $\la(D_3D_2D_1)$ are the same, which establishes the claim.
\end{proof}

\begin{rmk}\label{Huang_rmk}
The proof of this crucial proposition in \cite[Appendix~D.2]{Huang} 
does not appear to establish anything.
Up to notational differences, it describes an expression for 
$$\big\{\wt{C}_{D_2\circ D_1,D_3}\circ\wt{C}_{D_1,D_2}\!\otimes\!\id\}
\big(x_1\!\otimes\!y_1^*\otimes x_2\!\otimes\!y_2^*\otimes x_3\!\otimes\!y_3^*\big)
\in \la(D_3\!\circ\!D_2\!\circ\!D_1)$$
without any signs and simply claims that 
$$\big\{\wt{C}_{D_1,D_3\circ D_2}\circ\id\!\otimes\!\wt{C}_{D_2,D_3}\big\}
\big(x_1\!\otimes\!y_1^*\otimes x_2\!\otimes\!y_2^*\otimes x_3\!\otimes\!y_3^*\big)
\in \la(D_3\!\circ\!D_2\!\circ\!D_1)$$
is given by the same expression, without providing an explicit formula for 
$\wt{C}_{D_1,D_2}$, using the statement of Lemma~\ref{ses_lmm2},
or indicating the significance of the grading of the lines~$\la(V)$.
As illustrated by the proof of Proposition~\ref{CompET_prp} above,
the two expressions require auxiliary terms from different vectors spaces
and it takes significant care to show that it is possible to choose them compatibly.
Furthermore, there are two typos at the end of the proof of the closely 
related \cite[Corollary~D.2.4]{Huang} with two subscripts that should be different
being the same and resulting in the order of two factors switched between the statements
of \cite[Proposition~D.2.3]{Huang} and \cite[Corollary~D.2.4]{Huang}. 
\end{rmk}

\begin{prp}[Compositions~II]\label{CompET_prp2}
For any pair $(\ft_1,\ft_2)$ of exact triples of Fredholm operators as in~\eref{cCcTdiag_e},
the diagram~\eref{compdiag_e2} induced by the isomorphisms~\eref{cUDDdfn_e}
commutes.
\end{prp}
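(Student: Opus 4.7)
The plan is to prove the commutativity of~\eref{compdiag_e2} by direct computation on generic elements, using the explicit formulas~\eref{cUDDdfn_e} for the Exact Triples isomorphisms $\Psi_{\ft_1},\Psi_{\ft_2},\Psi_{\cC_{\cT}(\ft_1,\ft_2)}$ and~\eref{cUDDdfn_e2} for the Composition isomorphisms $\wt\cC_{D_1',D_2'}$, $\wt\cC_{D_1'',D_2''}$, $\wt\cC_{D_1,D_2}$. The argument will parallel the proof of Proposition~\ref{CompET_prp}, but with a larger lattice of commuting exact sequences, and the outcome will be a combined identity among wedge products in suitably chosen quotients of kernels and cokernels together with a matching congruence of the accumulated sign exponents modulo~$2$.

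First I would assemble the full grid of short exact sequences. Each of $\ft_1$ and $\ft_2$ contributes a six-term Snake Lemma sequence of the form~\eref{ETles_e} relating $\ka(D_j^{\star})$, $\fc(D_j^{\star})$ for $\star\in\{',{\rm nothing},''\}$. Each composition $D_2^{\star}\!\circ\!D_1^{\star}$ contributes a six-term sequence of the form~\eref{ETles_e2}. The composed triple $\cC_{\cT}(\ft_1,\ft_2)$ contributes yet another Snake Lemma sequence relating the $\ka$ and $\fc$ of the three composed operators. All of these fit into a commutative three-dimensional lattice of short exact sequences among common subquotients of the kernels and cokernels of the nine operators~$D_j^{\star}$, with the connecting maps $\de_1,\de_2,\de_{12},\de',\de''$ as the bridges.

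Second I would pick a generic decomposable element $x_1'\!\otimes\!y_1'^*\otimes x_1''\!\otimes\!y_1''^*\otimes x_2'\!\otimes\!y_2'^*\otimes x_2''\!\otimes\!y_2''^*$ and then produce compatible wedge-element choices in every intermediate quotient. The strategy is to choose elements in the innermost subquotients first (those shared by multiple exact sequences, in particular the images and preimages along the connecting maps) and extend outward by invoking Lemma~\ref{ses_lmm}; consistency of these choices across distinct resolutions of the same quotient is then guaranteed by Corollary~\ref{ses_crl} and Lemma~\ref{ses_lmm2} applied to appropriately chosen $3\!\times\!3$ subdiagrams of the lattice, in direct analogy with~\eref{uwrel_e}--\eref{xydfn_e} and the diagrams in Figure~\ref{CompDiags_fig}.

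Third I would unwind both sides of~\eref{compdiag_e2} step-by-step, applying~\eref{cUDDdfn_e} and~\eref{cUDDdfn_e2} at each stage. After all wedge rearrangements have been carried out, the equality of the two sides reduces to two simultaneous claims: (a) after using Lemma~\ref{ses_lmm2} to interchange the appropriate wedge factors, the underlying wedge expressions in $\la(\ka(D_2\!\circ\!D_1))\otimes\la^*(\fc(D_2\!\circ\!D_1))$ coincide; and (b) the total accumulated sign exponent on each side, a sum of contributions $\ep_{\ft_j^{\star}}$ from~\eref{signdfn_e} and $\ep_{D_1^{\star},D_2^{\star}}$ from the formula preceding~\eref{cUDDdfn_e2}, together with the sign changes produced by Lemma~\ref{ses_lmm2}, agree modulo~$2$. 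The main obstacle is the sign bookkeeping: compared to Proposition~\ref{CompET_prp}, there are now six indices from the compositions interacting with six connecting maps from the exact triples, so the expressions for the two exponents expand into many summands involving $\ind\,D_j^{\star}$ and $\fd(\fc(D_j^{\star}))$, and showing their congruence requires systematic use of $\ind(D_2^{\star}\!\circ\!D_1^{\star})\!=\!\ind\,D_1^{\star}\!+\!\ind\,D_2^{\star}$ together with the additivity of $\fd(\ka)$ and $\fd(\fc)$ in short exact sequences. Apart from this, the principal conceptual difficulty is organizing the wedge choices so that they factor consistently through the common images of all five connecting maps; as in Proposition~\ref{CompET_prp}, the key tool here will be applying Lemma~\ref{ses_lmm2} to a collection of $3\!\times\!3$ subdiagrams built out of the lattice, which simultaneously delivers both the wedge rearrangement identities and the missing sign flips that reconcile the two exponents.
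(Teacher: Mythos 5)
Your proposal is correct and follows essentially the same route as the paper's proof: a direct computation with the explicit formulas \eref{cUDDdfn_e} and \eref{cUDDdfn_e2}, compatible wedge-element choices organized through a lattice of commuting short exact sequences (the paper's Figures~\ref{CompDiags_fig2} and~\ref{CompDiags_fig3}) via Lemma~\ref{ses_lmm}, Lemma~\ref{ses_lmm2}, and Corollary~\ref{ses_crl}, and a separate verification that the accumulated sign exponents agree modulo~$2$ using index additivity. The paper executes exactly this plan, including the two features you flag as the main difficulties: the careful ordering of choices so that shared subquotients receive consistent representatives, and the reconciliation of the two exponents through the extra signs produced by Lemma~\ref{ses_lmm2}.
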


\begin{proof} 
We continue with the notation described in the first sentence of the proof of 
Proposition~\ref{CompET_prp} and~define
$$\ft_{12}=\cC_{\cT}(\ft_1,\ft_2),\quad 
\ep_{\tnL}=(\ind\,D_1'')(\ind\,D_2')+\ep_{D_1',D_2'}+\ep_{D_1'',D_2''}+\ep_{\ft_{12}}\,,
\quad \ep_{\tnR}=\ep_{\ft_1}+\ep_{\ft_2}+\ep_{D_1,D_2}\,.$$
For $k\!=\!1,2$ and $\star=',''$, let
$$x_k^{\star}\in\la(\ka(D_k^{\star}))-0, \qquad 
y_k'\in\la(\fc(D_k'))-0,\qquad
y_k''\in\la\bigg(\frac{X_{k+1}}{\Im(\fI_{k+1})+\Im(D_k)}\bigg)-0\,.$$
With $\star$ denoting $',''$ or a blank, let
$$u^{\star}\in\la\bigg(\frac{\ka(D_2^{\star}D_1^{\star})}{\ka(D_1^{\star})}\bigg)-0,\quad
v^{\star}\in\la\bigg(\frac{\ka(D_2^{\star})}{\ka(D_2^{\star})\cap\Im(D_1^{\star})}\bigg)-0,\quad
w^{\star}\in\la\bigg(\frac{X_2^{\star}}{\ka(D_2^{\star})+\Im(D_1^{\star})}\bigg)-0;$$
see Figures~\ref{CompDiags_fig2} and~\ref{CompDiags_fig3}.
For $k\!=\!1,2,12$, let
$$\de_k\!:\ka(D_k'')\lra \fc(D_k'),$$
where $D_{12}'\!=\!D_2'D_1'$ and $D_{12}''\!=\!D_2''D_1''$, be the connecting 
homomorphisms in the sequences~\eref{ETles_e} 
corresponding to~$\ft_1$, $\ft_2$, and $\ft_{12}$, respectively, and 
$$u_k\!\in\!\la\bigg(\frac{\ka(D_k)}{\ka(D_k)\!\cap\!\Im(\fI_k)}\bigg)-0,\quad
v_k\!\in\!\la\bigg(\frac{\ka(D_k'')}{\fJ_k(\ka(D_k))}\bigg)-0,\quad
w_k\!\in\!\la\bigg(\frac{X_{k+1}'}{\fI_{k+1}^{-1}(\Im(D_k))}\bigg)-0,$$
with $\fI_{12}\!=\!\fI_1$, $\fJ_{12}\!=\!\fJ_1$, and $12\!+\!1\!\equiv\!3$; 
see Figures~\ref{CompDiags_fig2} and~\ref{CompDiags_fig3}.
Define
$$\wt{w}''\in\la\bigg(\frac{X_2}{\fJ_2^{-1}(\ka(D_2''))+\Im(D_1)}\bigg)-0
\qquad\hbox{by}\quad w''=\la(\fJ_2)\wt{w}''.$$
Below we choose these elements in a compatible~way.\\

\noindent
In order to describe the two relevant signs, we define
\begin{equation*}\begin{split}
A&=(\ind\,D_2'')(\fc_1'\!+\!\fc_1''\!+\!\fc_2'\big)
+\big(\ind\,D_1''+\ind\,D_2'\big)\fc_1'+\fc_1''\ka_2',
\qquad C=\fc_1'+\fc_1''+\fc_2'+\fc_2''\,,\\
A_{\tnL}&=\ka_1''\ka_2'+(\ka_1''\!+\!\ka_2'')\lr{v'}
+(\fc_1'\!+\!\fc_2'\big)\lr{v''}
+\big(\lr{v'}\!+\!\lr{v''}\big)\lr{v_{12}},\\
A_{\tnR}&=\fc_1''\fc_2'+(\ka_2'\!+\!\ka_2'')\lr{v_1}
+(\fc_1'\!+\!\fc_1'')\lr{v_2} +\big(\lr{v_1}\!+\!\lr{v_2}\big)\lr{v},
\end{split}\end{equation*}
where $\ka_i^{\star}\!=\!\fd(\ka(D_i^{\star}))$ and 
$\fc_i^{\star}\!=\!\fd(\fc(D_i^{\star}))$ with $i\!=\!1,2$ and $\star\!=\!',''$.
Applying Lemma~\ref{ses_lmm} to the exact sequences~\eref{ETles_e} with $D^{\star}$
replaced by $D_k^{\star}$, for $\star\!=\!',''$ and blank and $k\!=\!1,2,12$, 
and \eref{ETles_e2} with $D_k$ replaced by $D_k^{\star}$, 
for $\star\!=\!',''$ and blank and $k\!=\!1,2$, we obtain
\BE{presign_e}\begin{aligned}
\ind\,D_2&=\ind\,D_2'+\ind\,D_2'',&\quad
\fd(\fc(D_k))&=\fd(\fc(D_k'))+\fd(\fc(D_k''))-\lr{v_k}\,,\\
\ind\,D_2''D_1''&=\ind\,D_1''+\ind\,D_2'',&\quad
\fd(\fc(D_2^{\star}D_1^{\star}))&=\fd(D_1^{\star})+\fd(\fc(D_2^{\star}))-\lr{v^{\star}}.
\end{aligned}\EE
From this, we find that 
\BE{sign2_e}\begin{split}
\ep_{\tnL}&=A+C\big(\lr{v'}\!+\!\lr{v''}\!+\!\lr{v_{12}}\big)
+A_{\tnR}+\lr{v_{12}},\\
\ep_{\tnR}&=A+C\big(\lr{v_1}\!+\!\lr{v_2}\!+\!\lr{v}\big)
+A_{\tnL}+\lr{v_1}\!+\!\lr{v_2},
\end{split}\EE
modulo 2.
By the identities in the second column in~\eref{presign_e},
\BE{sign_e4} \lr{v}+\lr{v_1}+\lr{v_2} =C-\fd(\fc(D_2D_1))
=\lr{v_{12}}+\lr{v'}+\lr{v''}\,.\EE
From the exact sequences~\eref{ETles_e} and~\eref{ETles_e2}, we also find  
\begin{equation*}
\ka_i''=\lr{u_i}+\lr{v_i},\quad \fc_i'=\lr{v_i}+\lr{w_i},\qquad
\ka_2^{\star}=\lr{u^{\star}}+\lr{v^{\star}},\quad
\fc_1^{\star}=\lr{v^{\star}}+\lr{w^{\star}},
\end{equation*}
where $i\!=\!1,2$ and $\star\!=\!',''$.
From this, we find that 
\BE{sign3_e}\begin{split}
&\lr{u'}\lr{u_1}+\lr{w''}\lr{w_2}+\lr{w'}\lr{u_2}+\lr{u''}\lr{w_1}
+\lr{v}\lr{v_{12}}\\
&\hspace{1in}
\cong A_{\tnL}+A_{\tnR}+
\big(\lr{v_1}\!+\!\lr{v_2}\!+\!\lr{v_{12}}\big)\big(\lr{v'}\!+\!\lr{v''}\!+\!\lr{v}\big)
\end{split}\EE
modulo~2.\\

\noindent
In light of the bottom row and right column in the first diagram in Figure~\ref{CompDiags_fig2},
the top row and left column in the second diagram in Figure~\ref{CompDiags_fig2},
and Lemma~\ref{ses_lmm}, we can take 
\BE{muetanew_e}\begin{aligned}
u&=\la(\fI_1)u'\w_{\frac{\ka(D_2D_1)}{\ka(D_1)}}\mu,&\qquad
u_{12}&=u_1\w_{\frac{\ka(D_2D_1)}{\ka(D_2D_1)\cap\Im(\fI_1)}}\mu,\\
w&=\eta\w_{\frac{X_2}{\ka(D_2)+\Im(D_1)}}\wt{w}'', &\qquad
w_{12}&=\la(\fI_3^{-1}\!\circ\!D_2)\eta\w_{\frac{X_3'}{\fI_3^{-1}(\Im(D_2D_1))}}w_2
\end{aligned}\EE
for some
$$\mu\in\la\bigg(\frac{\ka(D_2D_1)}{\ka(D_1)+\ka(D_2D_1)\cap\Im(\fI_1)}\bigg)-0\,,
\qquad
\eta\in\la\bigg(\frac{\fJ_2^{-1}(\ka(D_2''))}{\ka(D_2)+
\fJ_2^{-1}(\ka(D_2''))\cap\Im(D_1)}\bigg)-0.$$
Along with Lemma~\ref{ses_lmm2} applied to the two diagrams in Figure~\ref{CompDiags_fig2}, 
these equalities insure that 
\BE{value2_e}\begin{split}
&\big((\la(\fI_1)x_1'\!\w_{\ka(D_1)}\!u_1)\!\w_{\ka(D_2D_1)}u\big)\otimes
\big(\la(D_2)w\!\w_{\fc(D_2D_1)}(\la(\fI_3)w_2\!\w_{\fc(D_2)}\!y_2'')\big)^*\\
&\hspace{.3in}=(-1)^{\lr{u'}\lr{u_1}+\lr{\wt{w}''}\lr{w_2}}
\big(\la(\fI_1)(x_1'\!\w_{\ka(D_2'D_1')}u')\w_{\ka(D_2D_1)}u_{12}\big)\\
&\hspace{2in}
\otimes \big(\la(\fI_3)w_{12}\w_{\fc(D_2D_1)}(\la(D_2)\wt{w}''
\w_{\frac{X_3}{\Im(\fI_3)+\Im(D_2D_1)}}y_2'')\big)^*,
\end{split}\EE 
in $\la(D_2D_1)$.\\

\begin{figure}
\begin{gather*}
\xymatrix{&0\ar[d]&0\ar[d]&0\ar[d]&\\
0\ar[r]& \ka(D_1')\ar[r]^{\fI_1}\ar[d]& \ka(D_1)\ar[r]\ar[d]& 
\frac{\ka(D_1)}{\ka(D_1)\cap\Im(\fI_1)}\ar[d]\ar[r]&0\\
0\ar[r]& \ka(D_2'D_1')\ar[r]^{\fI_1}\ar[d]& \ka(D_2D_1)\ar[r]\ar[d]& 
\frac{\ka(D_2D_1)}{\ka(D_2D_1)\cap\Im(\fI_1)}\ar[d]\ar[r]&0\\
0\ar[r]& \frac{\ka(D_2'D_1')}{\ka(D_1')}\ar[r]^{\fI_1}\ar[d]& 
\frac{\ka(D_2D_1)}{\ka(D_1)}\ar[r]\ar[d]& 
\frac{\ka(D_2D_1)}{\ka(D_1)+\ka(D_2D_1)\cap\Im(\fI_1)}\ar[d]\ar[r]&0\\
&0&0&0&\\
&0\ar[d]&0\ar[d]&0\ar[d]&\\
0\ar[r]& \frac{\fJ_2^{-1}(\ka(D_2''))}{\ka(D_2)+\fJ_2^{-1}(\ka(D_2''))\cap\Im(D_1)}
\ar[r]\ar[d]^{\fI_3^{-1}\circ D_2}&
\frac{X_2}{\ka(D_2)+\Im(D_1)}\ar[r]\ar[d]^{D_2}&
\frac{X_2}{\fJ_2^{-1}(\ka(D_2''))+\Im(D_1)}\ar[r]\ar[d]^{D_2}\ar[r]\ar[d]&0\\
0\ar[r]& \frac{X_3'}{\fI_3^{-1}(\Im(D_2D_1))}\ar[d]\ar[r]^{\fI_3}& \fc(D_2D_1)\ar[r]\ar[d]&
\frac{X_3}{\Im(\fI_3)+\Im(D_2D_1)}\ar[d]\ar[r]& 0\\
0\ar[r]& \frac{X_3'}{\fI_3^{-1}(\Im(D_2))}\ar[d]\ar[r]^{\fI_3}& \fc(D_2)\ar[r]\ar[d]&
\frac{X_3}{\Im(\fI_3)+\Im(D_2)}\ar[d]\ar[r]& 0\\
&0&0&0&}
\end{gather*}
\rput{45}(3.7,13){$x_1'\ltimes$}\rput{45}(11.1,12.8){$u_1\ltimes$}
\rput{45}(10.8,11.1){$u_{12}\ltimes$}
\rput{45}(3.7,9.6){$u'\ltimes$}\rput{45}(7.7,9.6){$u\ltimes$}
\rput{45}(10.5,9.6){$\mu\ltimes$}
\rput{45}(2.4,5.1){$\eta\ltimes$}\rput{45}(7.3,5.2){$w\ltimes$}
\rput{45}(10.6,5.1){$\wt{w}''\ltimes$}
\rput{45}(3.1,3.3){$w_{12}\ltimes$}
\rput{45}(3.4,1.5){$w_2\ltimes$}\rput{45}(11,1.6){$y_2''\ltimes$}
\caption{Commutative diagrams of exact sequences used in the proof of Proposition~\ref{CompET_prp2}}
\label{CompDiags_fig2}
\end{figure}

\noindent
We next make use of the three commutative diagrams in Figure~\ref{CompDiags_fig3}.
They can be viewed as the three coordinate planes in $\Z^3$,
with all three diagrams sharing the center and any pair sharing an axis.
We choose $v',w',u_2,v_2,v_1,u'',\mu$ arbitrarily, then find $y_1',w_1,v_{12},v$
so that 
\BE{comp1_e}\begin{split}
v'\w_{\fc(D_1)}w'=y_1'=\la(\de_1)v_1\w_{\fc(D_1)}w_1, &\quad
v_1\w_{\frac{\ka(D_2''D_1'')}{\fJ_1(\ka(D_1))}}u''
=(-1)^{\lr{u''}\lr{w_1}}\la(\fJ_1)\mu\w_{\frac{\ka(D_2''D_1'')}{\fJ_1(\ka(D_2))}}v_{12},\\
\la(\fI_2)v'\w_{\frac{\ka(D_2)}{\ka(D_2)\cap\Im(\fI_2D_1)}}u_2
&=(-1)^{\lr{w'}\lr{u_2}}\la(D_1)\mu\w_{\frac{\ka(D_2)}{\ka(D_2)\cap\Im(\fI_2D_1)}}v\,,
\end{split}\EE
and finally take $\eta,x_2'',v''$ so that 
\BE{comp2_e}\begin{split}
\la(\fI_2)w'\w_{\frac{\fJ_2^{-1}(\ka(D_2''))}{\ka(D_2)+\Im(\fI_2D_1')}}v_2
&=(-1)^{\lr{v}\lr{v_{12}}}
\la(D_1\!\circ\!\fJ_1^{-1})v_{12}\w_{\frac{\fJ_2^{-1}(\ka(D_2''))}{\ka(D_2)+\Im(\fI_2D_1')}}\eta\,,\\
\la(\fJ_2)u_2\w_{\ka(D_2'')}v_2&=x_2''=\la(D_1'')u''\w_{\ka(D_2'')}v''\,.
\end{split}\EE
By Lemma~\ref{ses_lmm2} applied to the three commutative squares in Figure~\ref{CompDiags_fig3},
\eref{comp1_e}, and \eref{comp2_e},
\begin{equation*}\begin{split}
&\la(D_1\!\circ\!\fJ_1^{-1})\big(v_1\w_{\frac{\ka(D_2''D_1'')}{\fJ_1(\ka(D_1))}}u''\big)
\w_{\frac{\fJ_2^{-1}(\ka(D_2''))}{\Im(\fI_2D_1')}}
\big(\la(\fI_2)w_1\w_{\frac{\fJ_2^{-1}(\ka(D_2''))}{\fJ_2^{-1}(\ka(D_2''))\cap\Im(D_1)}}v''\big)\\
&\quad =(-1)^{\lr{u''}\lr{w_1}} \la(\fI_2)\big(\la(\de_1)v_1\w_{\fc(D_1)}w_1\big)
\w_{\frac{\fJ_2^{-1}(\ka(D_2''))}{\Im(\fI_2D_1')}}
\big(\la(D_1'')u''\w_{\ka(D_2'')}v''\big)\\
&\quad=(-1)^{\lr{u''}\lr{w_1}}  \la(\fI_2)\big(v'\w_{\fc(D_1)}w'\big) 
\w_{\frac{\fJ_2^{-1}(\ka(D_2''))}{\Im(\fI_2D_1')}}
\big(\la(\fJ_2)u_2\w_{\ka(D_2'')}v_2\big)\\
&\quad=(-1)^{\lr{u''}\lr{w_1}+\lr{w'}\lr{u_2}}\big(\la(\fI_2)v'
\w_{\frac{\ka(D_2)}{\ka(D_2)\cap\Im(\fI_2D_1)}}u_2\big)
\w_{\frac{\fJ_2^{-1}(\ka(D_2''))}{\Im(\fI_2D_1')}}
\big(\la(\fI_2)w'\w_{\frac{\fJ_2^{-1}(\ka(D_2''))}{\ka(D_2)+\Im(\fI_2D_1')}}v_2\big)\\
&\quad 
=(-1)^{\lr{u''}\lr{w_1}+\lr{v}\lr{v_{12}}}
\big(\la(D_1)\mu\w_{\frac{\ka(D_2)}{\ka(D_2)\cap\Im(\fI_2D_1)}}v\big)
\w_{\frac{\fJ_2^{-1}(\ka(D_2''))}{\Im(\fI_2D_1')}}
\big(\la(D_1\!\circ\!\fJ_1^{-1})v_{12}
 \w_{\frac{\fJ_2^{-1}(\ka(D_2''))}{\ka(D_2)+\Im(\fI_2D_1')}}\eta\big)\\
&\quad=(-1)^{\lr{u''}\lr{w_1}}
\la(D_1\!\circ\!\fJ_1^{-1})
\big(\la(\fJ_1)\mu\w_{\frac{\ka(D_2''D_1'')}{\fJ_1(\ka(D_2))}}v_{12}\big)
\w_{\frac{\fJ_2^{-1}(\ka(D_2''))}{\Im(\fI_2D_1')}} 
\big(v\w_{\frac{\fJ_2^{-1}(\ka(D_2''))}{\fJ_2^{-1}(\ka(D_2''))\cap\Im(D_1)}}\!\eta\big).
\end{split}\end{equation*}
Along with the second equation in~\eref{comp1_e}, this gives
\BE{comp3_e}
\la(\fI_2)w_1\w_{\frac{\fJ_2^{-1}(\ka(D_2''))}{\fJ_2^{-1}(\ka(D_2''))\cap\Im(D_1)}}v''
=
\big(v\w_{\frac{\fJ_2^{-1}(\ka(D_2''))}{\fJ_2^{-1}(\ka(D_2''))\cap\Im(D_1)}}\eta\big).
\EE\\

\begin{figure}
\begin{gather*}
\xymatrix{&0\ar[d]&0\ar[d]&0\ar[d]&\\
0\ar[r]& \frac{\ka(D_2')}{\ka(D_2')\cap\Im(D_1')}\ar[d]\ar[r]^{\fI_2}&
\frac{\ka(D_2)}{\ka(D_2)\cap\Im(\fI_2D_1')}\ar[d]\ar[r]&
\frac{\ka(D_2)}{\ka(D_2)\cap\Im(\fI_2)}\ar[d]^{\fJ_2}\ar[r]& 0\\
0\ar[r]& \fc(D_1')\ar[d]\ar[r]^{\fI_2}&
\frac{\fJ_2^{-1}(\ka(D_2''))}{\Im(\fI_2D_1')}\ar[d]\ar[r]^{\fJ_2}& \ka(D_2'')\ar[d]\ar[r]& 0\\
0\ar[r]& \frac{X_2'}{\ka(D_2')+\Im(D_1')}\ar[d]\ar[r]^{\fI_2}&
\frac{\fJ_2^{-1}(\ka(D_2''))}{\ka(D_2)+\Im(\fI_2D_1')}\ar[d]\ar[r]^{\fJ_2}&
\frac{\ka(D_2'')}{\fJ_2(\ka(D_2))}\ar[d]\ar[r]& 0\\ 
&0\ar[d]&0\ar[d]&0\ar[d]&\\
0\ar[r]& \frac{\ka(D_1'')}{\fJ_1(\ka(D_1))}\ar[d]^{\de_1}\ar[r]& 
\frac{\ka(D_2''D_1'')}{\fJ_1(\ka(D_1))}\ar[d]^{D_1\circ\fJ_1^{-1}}\ar[r]& 
\frac{\ka(D_2''D_1'')}{\ka(D_1'')}\ar[d]^{D_1''}\ar[r]& 0\\ 
0\ar[r]& \fc(D_1')\ar[d]\ar[r]^{\fI_2}&
\frac{\fJ_2^{-1}(\ka(D_2''))}{\Im(\fI_2D_1')}\ar[d]\ar[r]^{\fJ_2}& \ka(D_2'')\ar[d]\ar[r]& 0\\
0\ar[r]& \frac{X_2'}{\fI_2^{-1}(\Im(D_1'))}\ar[d]\ar[r]^{\fI_2}&
\frac{\fJ_2^{-1}(\ka(D_2''))}{\fJ_2^{-1}(\ka(D_2''))\cap\Im(D_1)}\ar[d]\ar[r]^{\fJ_2}& 
\frac{\ka(D_2'')}{\ka(D_2'')\cap\Im(D_1'')}\ar[d]\ar[r]& 0\\ 
&0\ar[d]&0\ar[d]&0\ar[d]&\\
0\ar[r]& \frac{\ka(D_2D_1)}{\ka(D_1)+\ka(D_2D_1)\cap\Im(\fI_1)}\ar[d]^{D_1}\ar[r]^{\fJ_1}&
\frac{\ka(D_2''D_1'')}{\fJ_1(\ka(D_1))} \ar[d]^{D_1\circ\fJ_1^{-1}}\ar[r]&
\frac{\ka(D_2''D_1'')}{\fJ_1(\ka(D_2D_1))} \ar[d]^{D_1\circ\fJ_1^{-1}}\ar[r]& 0\\
0\ar[r]& \frac{\ka(D_2)}{\ka(D_2)\cap\Im(\fI_2D_1')}\ar[d]\ar[r]&
\frac{\fJ_2^{-1}(\ka(D_2''))}{\Im(\fI_2D_1')} \ar[d]\ar[r]&
\frac{\fJ_2^{-1}(\ka(D_2''))}{\ka(D_2)+\Im(\fI_2D_1')} \ar[d]\ar[r]& 0\\
0\ar[r]& \frac{\ka(D_2)}{\ka(D_2)\cap\Im(D_1)}\ar[d]\ar[r]&
\frac{\fJ_2^{-1}(\ka(D_2''))}{\fJ_2^{-1}(\ka(D_2''))\cap\Im(D_1)} \ar[d]\ar[r]&
\frac{\fJ_2^{-1}(\ka(D_2''))}{\ka(D_2)+\fJ_2^{-1}(\ka(D_2''))\cap\Im(D_1)} \ar[d]\ar[r]& 0\\
&0&0&0&}
\end{gather*}
\rput{45}(2.5,18.3){$v'\ltimes$}\rput{45}(11.2,18.2){$u_2\ltimes$}
\rput{45}(3.1,16.6){$y_1'\ltimes$}\rput{45}(11.7,16.6){$x_2''\ltimes$}
\rput{45}(2.6,14.8){$w'\ltimes$}\rput{45}(11.5,14.7){$v_2\ltimes$}
\rput{45}(2.8,11.8){$v_1\ltimes$}\rput{45}(11.6,11.8){$u''\ltimes$}
\rput{45}(3.1,10.1){$y_1'\ltimes$}\rput{45}(11.7,10.1){$x_2''\ltimes$}
\rput{45}(2.7,8.1){$w_1\ltimes$}\rput{45}(11.1,8.2){$v''\ltimes$}
\rput{45}(1.9,5.2){$\mu\ltimes$}\rput{45}(11.2,5.1){$v_{12}\ltimes$}
\rput{45}(2.6,1.7){$v\ltimes$}\rput{45}(10.3,1.7){$\eta\ltimes$}
\caption{Commutative diagrams of exact sequences used in the proof of Proposition~\ref{CompET_prp2}}
\label{CompDiags_fig3}
\end{figure}

\noindent
In addition to the choices of $y_1'$ and $x_2''$ specified 
in~\eref{comp1_e} and~\eref{comp2_e},  we take 
\BE{xydfn2_e}\begin{aligned}
x_1''&=\la(\fJ_1)u_1\w_{\ka(D_1'')} v_1\,,&\qquad 
\la(\fJ_2)y_1''&=v''\w_{\fc(D_2')}w''\,,\\
x_2'&=\la(D_1')u'\w_{\ka(D_2')}v'\,,&\qquad
y_2'&=\la(\de_2)v_2\w_{\fc(D_2')}w_2\,.
\end{aligned}\EE
By \eref{muetanew_e}, the last two equations in~\eref{comp1_e}, 
the first equation in~\eref{comp2_e}, \eref{comp3_e}, and Corollary~\ref{ses_crl}, 
\BE{comp4_e}\begin{split}
\la(\fI_2)w_1\w_{\fc(D_1)}y_1''=v\w_{\fc(D_1)}w\,, &\quad
\la(\fI_2)x_2'\w_{\la(D_2)}u_2=(-1)^{\lr{w'}\lr{u_2}}\la(D_1)u\w_{\ka(D_2)}v\,,\\
\la(D_2')w'\w_{\fc(D_2'D_1')}y_2'&=(-1)^{\lr{v}\lr{v_{12}}}
\la(\de_{12})v_{12}\w_{\fc(D_2'D_1')}w_{12}\,,\\
x_1''\w_{\ka(D_2''D_1'')}u''&=(-1)^{\lr{u''}\lr{w_1}}
\la(\fJ_1)u_{12}\w_{\ka(D_2''D_1'')}v_{12}\,;
\end{split}\EE
the third identity above also uses
$$\la(D_2')=\la(\fI_3^{-1}\!\circ\!D_2)\circ\la(\fI_2)\,, \quad
\la(\de_2)=\la(\fI_3^{-1}\!\circ\!D_2)\circ\la(\fJ_2)^{-1}\,,\quad
\la(\de_{12})=\la(\fI_3^{-1}\!\circ\!D_2)\circ\la(D_1\!\circ\!\fJ_1^{-1})\,.$$
By \eref{cUDDdfn_e}, the second equality in the first identity in~\eref{comp1_e},
the first equality in the last identity in~\eref{comp2_e},
the first and last equations in~\eref{xydfn2_e},  
\eref{cUDDdfn_e2}, and the first two equations in~\eref{comp4_e},
the image of 
$$x_1'\!\otimes\!y_1'^*\otimes x_1''\!\otimes\!(\la(\fJ_2)y_1'')^*\otimes
x_2'\!\otimes\!y_2'^* \otimes x_2''\!\otimes\!(\la(\fJ_3)y_2'')^* \in
\la(D_1')\otimes\la(D_1'')\otimes\la(D_2')\otimes\la(D_2'')$$
under $\wt{C}_{D_1,D_2}\circ\Psi_{\ft_1}\!\otimes\!\Psi_{\ft_2}$ is 
$$(-1)^{\ep_{\tnR}+\lr{w'}\lr{u_2}}
\big((\la(\fI_1)x_1'\!\w_{\ka(D_1)}\!u_1)\!\w_{\ka(D_2D_1)}u\big)\otimes
\big(\la(D_2)w\!\w_{\fc(D_2D_1)}(\la(\fI_3)w_2\!\w_{\fc(D_2)}\!y_2'')\big)^*\,.$$
By \eref{cUDDdfn_e2}, the first equality in the first identity in~\eref{comp1_e},
the second equality in the last identity in~\eref{comp2_e},
the second and third equations in~\eref{xydfn2_e},  
\eref{cUDDdfn_e}, and the last two equations in~\eref{comp4_e},
the image of this element under 
$\Psi_{\ft_{12}}\circ \wt\cC_{D_1',D_2'}\!\otimes\!\wt\cC_{D_1'',D_2''}
 \circ \id\!\otimes\!R\!\otimes\!\id$ is
\begin{equation*}\begin{split}
&(-1)^{\ep_{\tnL}+\lr{v}\lr{v_{12}}+\lr{u''}\lr{w_1}}
\big(\la(\fI_1)(x_1'\!\w_{\ka(D_2'D_1')}u')\w_{\ka(D_2D_1)}u_{12}\big)\\
&\hspace{2in}
\otimes \big(\la(\fI_3)w_{12}\w_{\fc(D_2D_1)}(\la(D_2)\wt{w}''
\w_{\frac{X_3}{\Im(\fI_3)+\Im(D_2D_1)}}y_2'')\big)^*\,.
\end{split}\end{equation*}
By \eref{value2_e} and \eref{sign2_e}-\eref{sign3_e},
these two elements of $\la(D_2D_1)$ are the same, which establishes the~claim.
\end{proof}

\subsection{Stabilizations of Fredholm operators}
\label{Reg_subs}

\noindent
We now describe stabilizations of Fredholm operators which are 
used to topologize determinant line bundles in the next section.
In this subsection, we use them to deduce the Exact Squares property
on page~\pageref{ESQs_prop} for the isomorphisms~\eref{ETisom_e} 
from the Naturality~II, Normalization~II, and Compositions~II properties
via Lemmas~\ref{ses_lmm2} and~\ref{StabTrip_lmm}.\\

\noindent
For any Banach vector space $X$, $N\!\in\!\Z^{\ge0}$, 
and homomorphism $\Th\!:\R^N\!\lra\!Y$, let 
$$\io_{X;N}\!:X\!\lra\!X\!\oplus\!\R^N, \qquad
D_{\Th}\!:X\!\oplus\!\R^N\lra Y, \quad\hbox{and}\quad
\hat\cI_{\Th;D}\!: \la(D)\lra \la(D_{\Th})$$
be as in Section~\ref{outline_subs}.
Since $D=D_{\Th}\!\circ\!\io_{X;N}$ and the projection $\pi_2\!:X\!\oplus\!\R^N\!\lra\!\R^N$
identifies $\fc(\io_{X;N})$ with~$\R^N$, 
the corresponding exact triple \eref{compEX_e} gives rise to the isomorphism
\BE{cIThD_e1}\cI_{\Th;D}\!: \la(D_{\Th})\lra \la(D), \qquad
\cI_{\Th;D}(\si)=\ti\cC_{\io_{X;N},D_{\Th}}\big(1\!\otimes\!
(\OmN^*\!\circ\!\la(\pi_2))\otimes \si\big),\EE
where $\OmN$ is the standard volume tensor on~$\R^N$ as before.

\begin{lmm}\label{cIdual_lmm}
For any collection of exact triple isomorphisms~$\Psi_{\ft}$ as in~\eref{ETisom_e} satisfying 
the Normalization~III and Compositions~II properties and $N\!\in\!\Z^{\ge0}$,
there exists $A_N\!\in\!\R^*$ with the following property.
For any Banach vector spaces~$X$ and~$Y$,
homomorphism $\Th\!:\R^N\!\lra\!Y$, and $D\!\in\!\cF(X,Y)$,
the isomorphisms~\eref{cIThD_e2} and~\eref{cIThD_e1}
induced by the isomorphisms~$\Psi_{\ft}$  satisfy
\BE{cIdualcomp_e}\cI_{\Th;D}\circ\hat\cI_{\Th;D}=(-1)^{(\ind\,D)N}A_N\,\id_{\la(D)}.\EE
\end{lmm}

\begin{proof} Let $A_N\!\in\!\R^*$ be such that
$$\ti\cC_{\io_{0;N},j_N}\!:\la(\io_{0;N})\!\otimes\!\la(j_N)\lra
\la\big(\id_0\!=\!j_N\!\circ\!\io_{0;N}\big), \quad
\ti\cC_{\io_{0;N},j_N}\big((1\!\otimes\!\OmN^*)\!\otimes\!(\OmN\!\otimes\!1^*)\big)
=A_N\,1\!\otimes\!1^*.$$
By the Compositions~II and Normalization~III properties applied to the diagram
$$\xymatrix{0\ar[r]& X\ar[d]^{\id}\ar[r]^{\id}& 
X\ar[d]^{\io_{X;N}}\ar[r]& 0\ar[d]^{\io_{0;N}}\ar[r]&0\\
0\ar[r]& X\ar[d]^D\ar[r]^{\io_{X;N}}& X\!\oplus\!\R^N\ar[d]^{D_{\Th}}\ar[r]^{\pi_2}&
\R^N\ar[d]^{j_N}\ar[r]&0\\
0\ar[r]& Y\ar[r]^{\id}& Y\ar[r]& 0\ar[r]&0\,,}$$
the diagram 
$$\xymatrix{\la(D)\ar[d]|{(-1)^{(\ind\,D)N}A_N} 
 \ar[rr]^{\hat\cI_{\Th;D}} &&\la(D_{\Th})\ar[d]|{\cI_{\Th;D}}\\
\la(D) \ar[rr]^{\id} && \la(D)}$$
commutes.
This gives~\eref{cIdualcomp_e}.
\end{proof}

\noindent
For a short exact sequence
$$0\lra \R^{N'}\stackrel{\fI}\lra \R^N \stackrel{\fJ}\lra \R^{N''}\lra0$$
of vector spaces, we denote by $\fc_{\fI,\fJ}$ the exact triple
$$\xymatrix{0\ar[r]&0\ar[r]\ar[d]&0\ar[r]\ar[d]&0\ar[r]\ar[d]&0\\
0\ar[r]&\R^{N'}\ar[r]^{\fI}& \R^N\ar[r]^{\fJ}& \R^{N''}\ar[r]&0}$$
of Fredholm operators.
Given a collection of exact triple isomorphisms as in~\eref{ETisom_e},
define $A_{\fI,\fJ}\!\in\!\R^*$ by
\BE{Asdfn_e} 
\Psi_{\fc_{\fI,\fJ}}\big(1\otimes\!\Om_{N'}^*\!
\otimes 1\!\otimes\!\Om_{N''}^*\big)=A_{\fI,\fJ}^{-1}(-1)^{N'N''}\,
\big(1\otimes\OmN^*\big).\EE
In the case of the collection of exact triple isomorphisms given by~\eref{cUDDdfn_e}, 
$$\Om_{N'}\w_{\R^N}\Om_{N''}=A_{\fI,\fJ}\,\OmN\,,$$
where $\w_{\R^N}$ is the isomorphism provided by Lemma~\ref{ses_lmm} for the $(\fI,\fJ)$
short exact sequence above.\\

\noindent
For an exact triple~$\fs$ of vector-space homomorphisms of the form 
\BE{StbETdfn_e}\begin{split}
\xymatrix{0\ar[r]& \R^{N'}\ar[d]^{\Th'}\ar[r]^{\fI}& \R^N\ar[d]^{\Th}\ar[r]^{\fJ}& 
\R^{N''}\ar[d]^{\Th''}\ar[r]&0\\
0\ar[r]& Y'\ar[r]^{\fI_Y}& Y\ar[r]^{\fJ_Y}& Y''\ar[r]&0\,,}
\end{split}\EE
let $A_{\fs}\!=\!A_{\fI,\fJ}$.
If $\ft$ is an exact triple of Fredholm operators as in~\eref{cTdiag_e}, 
we denote by $\ft_{\fs}$ the exact triple 
$$\xymatrix{0\ar[r]& X'\!\oplus\!\R^{N'} \ar[rr]^{\fI_X\oplus\fI} \ar[d]|{D_{\Th'}'}&&
X\!\oplus\!\R^N \ar[rr]^{\fJ_X\oplus\fJ}\ar[d]|{D_{\Th}} && 
X''\!\oplus\!\R^{N''} \ar[r]\ar[d]|{D_{\Th''}''} & 0\\
0\ar[r]& Y' \ar[rr]^{\fI_Y}&& Y \ar[rr]^{\fJ_Y}&& Y'\ar[r]& 0}$$
of Fredholm operators.

\begin{lmm}\label{StabTrip_lmm}
Let $\{\Psi_{\ft}\}_{\ft}$ be a family of exact triple isomorphisms as in~\eref{ETisom_e} satisfying 
the Naturality~II and Compositions~II properties.
For every exact triple~$\ft$ of Fredholm operators as in~\eref{cTdiag_e}
and for every  exact triple~$\fs$ of homomorphisms as in~\eref{StbETdfn_e},
the diagram
\begin{equation*}\begin{split}
\xymatrix{\la(D_{\Th'}')\otimes\la(D_{\Th''}'') 
\ar[rrrrr]^{\Psi_{\ft_{\fs}}}
\ar[d]|{\cI_{\Th';D'}\otimes\cI_{\Th'';D''}} &&&&&
\la(D_{\Th}) \ar[d]^{\cI_{\Th;D}} \\
\la(D')\otimes\la(D'')
\ar[rrrrr]^>>>>>>>>>>>>>>>>>>>>>>>>>>{(-1)^{(\ind\,D')N''}A_{\fs}\Psi_{\ft}} 
&&&&& \la(D)}
\end{split}\end{equation*}
of isomorphisms induced by the isomorphisms~\eref{ETisom_e} commutes.
\end{lmm}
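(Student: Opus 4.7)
The idea is to view the original exact triple $\ft$ as the $\cC_{\cT}$-composition $\cC_{\cT}(\ft_1,\ft_{\fs})$, where $\ft_1$ is the exact triple of Fredholm operators
$$\xymatrix{0\ar[r]& X'\ar[r]^{\fI_X}\ar[d]_{\io_{X';N'}}& X\ar[r]^{\fJ_X}\ar[d]_{\io_{X;N}}& X''\ar[d]^{\io_{X'';N''}}\ar[r]& 0\\
0\ar[r]& X'\!\oplus\!\R^{N'}\ar[r]^{\fI_X\oplus\fI}& X\!\oplus\!\R^N\ar[r]^{\fJ_X\oplus\fJ}& X''\!\oplus\!\R^{N''}\ar[r]& 0}$$
whose columns are the natural inclusions; the rows are exact as each is the direct sum of a row of~$\ft$ with a row of~$\fs$. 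Since $D^{\star}_{\Th^{\star}}\!\circ\!\io_{X^{\star};N^{\star}}\!=\!D^{\star}$ for each $\star\!=\!',''$ or blank, indeed $\cC_{\cT}(\ft_1,\ft_{\fs})\!=\!\ft$, and Proposition~\ref{CompET_prp2} applied to $(\ft_1,\ft_{\fs})$ supplies the commutative square~\eref{compdiag_e2} relating the three collections of isomorphisms.

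I plan to evaluate~\eref{compdiag_e2} on elements of the form $\tau'\!\otimes\!\tau''\!\otimes\!\si'\!\otimes\!\si''$, where $\tau^{\star}\!=\!1\!\otimes\!(\Om_{N^{\star}}^*\!\circ\!\la(\pi_2))\!\in\!\la(\io_{X^{\star};N^{\star}})$ is chosen precisely so that, by~\eref{cIThD_e1}, $\wt\cC_{\io_{X^{\star};N^{\star}},D^{\star}_{\Th^{\star}}}(\tau^{\star}\!\otimes\!\si^{\star})\!=\!\cI_{\Th^{\star};D^{\star}}(\si^{\star})$. The $\id\!\otimes\!R\!\otimes\!\id$ on the left column contributes the Koszul sign $(-1)^{N''(\ind\,D'+N')}$, arising from $\deg\la(\io_{X'';N''})\!\equiv\!N''$ and $\deg\la(D_{\Th'}')\!\equiv\!\ind\,D'\!+\!N'$ modulo~$2$, so traveling down then right produces
$$(-1)^{N''(\ind\,D'+N')}\,\Psi_{\ft}\bigl(\cI_{\Th';D'}(\si')\!\otimes\!\cI_{\Th'';D''}(\si'')\bigr).$$

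Traveling right then down requires computing $\Psi_{\ft_1}(\tau'\!\otimes\!\tau'')$ via the explicit formula~\eref{cUDDdfn_e}. Since $\ka(\io_{X^{\star};N^{\star}})\!=\!0$, the Snake Lemma sequence~\eref{ETles_e} for~$\ft_1$ collapses to the cokernel exact sequence $0\!\to\!\R^{N'}\!\to\!\R^N\!\to\!\R^{N''}\!\to\!0$, which under the $\pi_2$-identifications is precisely the first row of~$\fs$. Substituting $x\!=\!u\!=\!v\!=\!1$ and choosing $w$ to correspond to~$\Om_{N'}$ and~$y$ to a $\fJ$-preimage of~$\Om_{N''}$, the defining equation~\eref{Asdfn_e} yields $\la(\fI_Y)w\!\w_{\fc(D)}\!y\!=\!A_{\fs}\,\tilde\Om_N$, where $\la(\pi_2)\tilde\Om_N\!=\!\Om_N$. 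Using the identity $(cv)^*\!=\!c^{-1}v^*$ and observing that $\ka(D_1'')\!=\!0$ forces $\fd(\Im\,\de)\!=\!0$ in~\eref{signdfn_e}, hence $\ep_{\ft_1}\!\equiv\!(-N'')N'\!\equiv\!N'N''$ modulo~$2$, we arrive at $\Psi_{\ft_1}(\tau'\!\otimes\!\tau'')\!=\!(-1)^{N'N''}A_{\fs}^{-1}\,\tau$, where $\tau\!=\!1\!\otimes\!(\Om_N^*\!\circ\!\la(\pi_2))$. A final application of~\eref{cIThD_e1} identifies $\wt\cC_{\io_{X;N},D_{\Th}}(\tau\!\otimes\!\Psi_{\ft_{\fs}}(\si'\!\otimes\!\si''))$ with $\cI_{\Th;D}(\Psi_{\ft_{\fs}}(\si'\!\otimes\!\si''))$, so the right-then-down route contributes $(-1)^{N'N''}A_{\fs}^{-1}\cI_{\Th;D}(\Psi_{\ft_{\fs}}(\si'\!\otimes\!\si''))$.

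Equating the two routes and solving yields the asserted identity, with overall sign $(-1)^{N''(\ind\,D'+N')-N'N''}\!=\!(-1)^{(\ind\,D')N''}$ and normalization factor~$A_{\fs}$. The hardest part will be the sign bookkeeping—coordinating the Koszul swap~$R$, the $(cv)^*\!=\!c^{-1}v^*$ dualization that inverts~$A_{\fs}$, and the exponent~$\ep_{\ft_1}$—but once Proposition~\ref{CompET_prp2} is invoked, no further diagram chase or auxiliary linear-algebra construction is needed.
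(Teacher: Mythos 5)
Your proposal is correct and follows essentially the same route as the paper: the paper's proof likewise realizes $\ft$ as the $\cC_{\cT}$-composition of the exact triple of stabilization inclusions $\io_{X^{\star};N^{\star}}$ with $\ft_{\fs}$, computes $\Psi$ of the inclusion triple on the canonical generators $1\!\otimes\!(\Om_{N^{\star}}^*\!\circ\!\la(\pi_2^{\star}))$ to get the factor $(-1)^{N'N''}A_{\fs}^{-1}$, and then invokes Proposition~\ref{CompET_prp2} together with~\eref{cIThD_e1}. Your sign bookkeeping (the Koszul sign $(-1)^{N''(\ind D'+N')}$ from $R$ combined with $(-1)^{N'N''}$ to give $(-1)^{(\ind D')N''}$) is accurate and merely makes explicit what the paper leaves to the reader.
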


\begin{proof}
By our assumptions, the diagram
$$\xymatrix{ 0\ar[r]& X' \ar[rr]^{\fI_X}\ar[d]|{\io_{X';N'}} &&
X \ar[rr]^{\fJ_X}\ar[d]|{\io_{X;N}} && X'' \ar[r]\ar[d]|{\io_{X'';N''}} & 0\\
0\ar[r]& X'\!\oplus\!\R^{N'} \ar[rr]^{\fI_X\oplus\fI} \ar[d]|{D_{\Th'}'}&&
X\!\oplus\!\R^N \ar[rr]^{\fJ_X\oplus\fJ}\ar[d]|{D_{\Th}} && 
X''\!\oplus\!\R^{N''} \ar[r]\ar[d]|{D_{\Th''}''} & 0\\
0\ar[r]& Y' \ar[rr]^{\fI_Y}&& Y \ar[rr]^{\fJ_Y}&& Y'\ar[r]& 0}$$
commutes.
By Naturality~II applied to the exact triple~$\ft_{\tnT}$ in the top half
of this diagram and the exact triple~$\fc_{\fI,\fJ}$ 
and by~\eref{Asdfn_e},
$$\Psi_{\ft_{\tnT}}\big(1\otimes\!(\Om_{N'}^*\!\circ\!\la(\pi_2'))\!
\otimes 1\!\otimes\!(\Om_{N''}^*\!\circ\!\la(\pi_2'')) \big)
=A_{\fs}^{-1}(-1)^{N'N''}\,1\otimes(\OmN^*\!\circ\!\la(\pi_2)),$$
where $\pi_2^{\star}\!:X^{\star}\!\oplus\!\R^{N^{\star}}\lra\!\R^{N^{\star}}$
is the projection on the second component and $\star=',''$ or blank.
Thus, the claim follows from the Compositions~II property applied to the above
diagram, along with~\eref{cIThD_e1}.
\end{proof}

\begin{crl}[Exact Squares]\label{ExSQ_crl}
A family of exact triple isomorphisms~$\Psi_{\ft}$ as in~\eref{ETisom_e} satisfying 
the Naturality~II, Normalization~II, and Compositions~II properties
also satisfies the Exact Squares property on page~\pageref{ESQs_prop}.
\end{crl}

\begin{proof} 
We augment the domains in~\eref{SQexact_e1} by a commuting square of homomorphisms between
finite-dimensional vector spaces, 
obtaining a version of the commutative diagram~\eref{SQexact_e1} with surjective
Fredholm operators. 
The conclusion of this corollary holds for such a diagram by 
the Normalization~II property and Lemma~\ref{ses_lmm2}.
The diagrams~\eref{SQexact_e2} corresponding to the original
and new diagrams~\eref{SQexact_e1} are related by Lemma~\ref{StabTrip_lmm}.
This gives rise to a cube of commuting diagrams; see Figure~\ref{CubeDiag_fig}.
We put the new version of~\eref{SQexact_e2} on the back face
and the diagrams arising from Lemma~\ref{StabTrip_lmm} on the top, right, bottom, 
and left faces.
This forces some coefficients~$A_{\star}$ on each edge of the front face
in order to make the last four diagrams commute.
The resulting coefficient distribution on the edges of the front face may be different
from the coefficients distribution (all $A_{\star}\!=\!1$) on the original version 
of~\eref{SQexact_e2}.
However, by Lemma~\ref{ses_lmm2}, the two coefficient distributions are equivalent at least if  
the original diagram consists of surjective Fredholm operators.
Since the coefficients depend only on the supplementary commuting square of 
homomorphisms between finite-dimensional vector spaces
and on the parities of the indices of the Fredholm operators (not the dimensions of 
their kernels or cokernels), it follows that the two coefficient distributions are
equivalent in all cases; this establishes Corollary~\ref{ExSQ_crl}.\\

\begin{sidewaysfigure}
\thisfloatpagestyle{empty}
\begin{gather*}
\xymatrix{
&\la(\ti{D}_{\tnT\tnL})\!\otimes\!\la(\ti{D}_{\tnB\tnL})
                    \!\otimes\!\la(\ti{D}_{\tnT\tnR})\!\otimes\!\la(\ti{D}_{\tnB\tnR}) 
 \ar@{--}[dd]|{~~\ti\Psi_{\tnL}\otimes\ti\Psi_{\tnR}}
\ar[rr]^>>>>>>>>>>>>>>>>>>>>>>{\ti\Psi_{\tnT}\otimes\ti\Psi_{\tnB}\,\circ\,\id \otimes R\otimes \id} 
 \ar[ddl]|{\cI_{\tnT\tnL}\otimes\cI_{\tnB\tnL}\otimes\cI_{\tnT\tnR}\otimes\cI_{\tnB\tnR}}&&
\la(\ti{D}_{\tnT\tnM})\!\otimes\!\la(\ti{D}_{\tnB\tnM})
\ar[ddl]|{~~\cI_{\tnT\tnM}\otimes\cI_{\tnB\tnM}} \ar[dddd]^{\ti\Psi_{\tnM}}\\ 
\\
\la(D_{\tnT\tnL})\!\otimes\!\la(D_{\tnB\tnL})\!\otimes\!\la(D_{\tnT\tnR})\!\otimes\!\la(D_{\tnB\tnR})
 \ar[dddd]|{~~A_{\tnL}\Psi_{\tnL}\otimes\Psi_{\tnR}} 
\ar[rr]^>>>>>>>>>>>>>>>>>>>>{A_{\tnT}
\Psi_{\tnT}\otimes\Psi_{\tnB}\,\circ\,\id \otimes R\otimes \id} 
&\ar@{-->}[dd] 
&\la(D_{\tnT\tnM})\!\otimes\!\la(D_{\tnB\tnM})
\ar[dddd]|>>>>>>>>>>>>>>>>{A_{\tnR}\Psi_{\tnM}}\\ 
\\
&\la(\ti{D}_{\tnC\tnL})\!\otimes\!\la(\ti{D}_{\tnC\tnR})
\ar@{--}[r]^{\ti\Psi_{\tnC}} 
\ar@{-->}[ddl]|{~~\cI_{\tnC\tnL}\otimes\cI_{\tnC\tnR}} 
& \ar@{-->}[r]& \la(\ti{D}_{\tnC\tnM}) \ar[ddl]|{\cI_{\tnC\tnM}}\\
\\
\la(D_{\tnC\tnL})\!\otimes\!\la(D_{\tnC\tnR})\ar[rr]^{A_{\tnB}\Psi_{\tnC}}
&& \la(D_{\tnC\tnM})}
\end{gather*}
\caption{The cube of commutative diagrams used in the proof of Corollary~\ref{ExSQ_crl},
where $\ti{D}_{\star*}\!=\!(D_{\star*})_{\Th_{\star*}}$,
$\cI_{\star*}\!=\!\cI_{\Th_{\star*};D_{\star*}}$, and
$\ti\Psi_{\star}$ are the isomorphisms~\eref{ETisom_e}
corresponding to the \sf{t}op, \sf{c}enter, and \sf{b}ottom rows and
\sf{l}eft, \sf{m}iddle, and \sf{r}ight columns of 
the regularized version of the diagram~\eref{SQexact_e1} described in the proof}
\label{CubeDiag_fig}
\end{sidewaysfigure}

\noindent
We  denote the range of the operator $D_{\star*}$ by~$Y_{\star*}$.
Let
$$\Th_{\tnT\tnL}\!:\R^{N_{\tnT\tnL}}\lra Y_{\tnT\tnL}, \quad
\ti\Th_{\tnT\tnR}\!:\R^{N_{\tnT\tnR}}\lra Y_{\tnT\tnM},\quad
\ti\Th_{\tnB\tnL}\!:\R^{N_{\tnB\tnL}}\lra Y_{\tnC\tnL}, \quad
\ti\Th_{\tnB\tnR}\!:\R^{N_{\tnB\tnR}}\lra Y_{\tnC\tnM},$$
be homomorphisms such that
\BE{regulcond_e}\fc\big((D_{\tnT\tnL})_{\Th_{\tnT\tnL}}\big),
\fc\big((D_{\tnT\tnR})_{\fJ_{\tnT}\circ\ti\Th_{\tnT\tnR}}\big),
\fc\big((D_{\tnB\tnL})_{\fJ_{\tnL}\circ\ti\Th_{\tnB\tnL}}\big),
\fc\big((D_{\tnB\tnR})_{\fJ_{\tnR}\circ\fJ_{\tnC}\circ\ti\Th_{\tnB\tnR}}\big)=0.\EE
Let 
\begin{gather*}
N_{\tnT\tnM}=N_{\tnT\tnL}+N_{\tnT\tnR}, \quad
N_{\tnC\tnL}=N_{\tnT\tnL}+N_{\tnB\tnL}, \quad
N_{\tnC\tnR}=N_{\tnT\tnR}+N_{\tnB\tnR}, \quad
N_{\tnB\tnM}=N_{\tnB\tnL}+N_{\tnB\tnR}, \\
N_{\tnC\tnM}=N_{\tnC\tnL}+N_{\tnC\tnR}=N_{\tnT\tnM}+N_{\tnB\tnM}.
\end{gather*}
We define $\Th_{\star*}\!:\R^{N_{\star*}}\!\lra\!Y_{\star*}$ 
for $(\star,*)\in\{\tnT,\tnC,\tnB\}\!\times\!\{\tnL,\tnM,\tnR\}-\{(\tnT,\tnL)\}$ by 
\begin{gather*}
\Th_{\tnT\tnR}=\fJ_{\tnT}\circ\ti\Th_{\tnT\tnR}, \quad
\Th_{\tnB\tnL}=\fJ_{\tnL}\circ\ti\Th_{\tnB\tnL}, \quad
\Th_{\tnB\tnR}=\fJ_{\tnR}\circ\fJ_{\tnC}\circ\ti\Th_{\tnB\tnR}
=\fJ_{\tnB}\circ\fJ_{\tnM}\circ\ti\Th_{\tnB\tnR},\\
\Th_{\tnT\tnM}(x_{\tnT\tnL},x_{\tnT\tnR})=
\fI_{\tnT}\big(\Th_{\tnT\tnL}(x_{\tnT\tnL})\big)+\ti\Th_{\tnT\tnR}(x_{\tnT\tnR}),\quad
\Th_{\tnC\tnL}(x_{\tnT\tnL},x_{\tnB\tnL})=
\fI_{\tnL}\big(\Th_{\tnT\tnL}(x_{\tnT\tnL})\big)+\ti\Th_{\tnB\tnL}(x_{\tnB\tnL}),\\
\Th_{\tnC\tnR}(x_{\tnT\tnR},x_{\tnB\tnR})=
\fI_{\tnR}\big(\Th_{\tnT\tnR}(x_{\tnT\tnR})\big)+\fJ_{\tnC}(\ti\Th_{\tnB\tnR}(x_{\tnB\tnR})),\\
\quad \Th_{\tnB\tnM}(x_{\tnB\tnL},x_{\tnB\tnR})=
\fI_{\tnB}\big(\Th_{\tnB\tnL}(x_{\tnB\tnL})\big)+\fJ_{\tnM}(\ti\Th_{\tnB\tnR}(x_{\tnB\tnR})),\\
\Th_{\tnC\tnM}(x_{\tnT\tnL},x_{\tnT\tnR},x_{\tnB\tnL},x_{\tnB\tnR})
=\fI_{\tnM}(\Th_{\tnT\tnM}(x_{\tnT\tnL},x_{\tnT\tnR}))+
\fI_{\tnC}(\ti\Th_{\tnB\tnL}(x_{\tnB\tnL}))+\ti\Th_{\tnB\tnR}(x_{\tnB\tnR})
\end{gather*}
for all $x_{\star*}\!\in\!\R^{N_{\star*}}$ with 
$(\star,*)\!\in\!\{\tnT,\tnB\}\!\times\!\{\tnL,\tnR\}$.
For any $N'\!\le\!N$, we denote by $\fI\!:\R^{N'}\!\lra\!\R^N$
and $\fJ\!:\R^N\!\lra\!\R^{N'}$ the inclusion as $\R^{N'}\!\oplus\!0^{N-N'}$
and the projection onto the last $N'$ coordinates, respectively.
We also define
\begin{alignat*}{2}
\fI'\!:\R^{N_{\tnC\tnL}}&\lra \R^{N_{\tnC\tnM}}, &\qquad
&\fI'(x_{\tnT\tnL},x_{\tnB\tnL})=(x_{\tnT\tnL},0,x_{\tnB\tnL},0),\\
\fJ'\!:\R^{N_{\tnC\tnM}}&\lra \R^{N_{\tnC\tnR}}, &\qquad
&\fI'(x_{\tnT\tnL},x_{\tnT\tnR},x_{\tnB\tnL},x_{\tnB\tnR})=(x_{\tnT\tnR},x_{\tnB\tnR}),
\end{alignat*}
for all $x_{\star*}\!\in\!\R^{N_{\star*}}$.
In particular, the diagram in Figure~\ref{CubeDiag_fig2} commutes and its 6 rows and 6 columns
are exact.\\

\begin{sidewaysfigure}
\thisfloatpagestyle{empty}
\begin{gather*}
\xymatrix{&&&0\ar[dd]&&0\ar[dd]&&0\ar[dd]&&\\ 
&&0\ar[dd]&&0\ar[dd]&&0\ar[dd]&&&\\
&0\ar@{-->}'[r][rr]&& 
\R^{N_{\tnT\tnL}}\ar@{-->}'[d][dd]^{\fI}\ar@{--}[r]^{\fI}\ar@{.>}[dl]|{\Th_{\tnT\tnL}}&
\ar@{-->}[r]& 
\R^{N_{\tnT\tnM}}\ar@{-->}'[d][dd]^{\fI}\ar@{--}[r]^{\fJ} \ar@{.>}[dl]|{\Th_{\tnT\tnM}}&
\ar@{-->}[r]& 
\R^{N_{\tnT\tnR}}\ar@{-->}'[d][dd]^{\fI}\ar@{-->}[rr] \ar@{.>}[dl]|{\Th_{\tnT\tnR}}&&0\\
0\ar[rr]&& Y_{\tnT\tnL}\ar[dd]^>>>>>>>>>>>>>>>>{\fI_{\tnL}} \ar[rr]^>>>>>>>>>{\fI_{\tnT}}&& 
Y_{\tnT\tnM}\ar[dd]^>>>>>>>>>>>>>>>>{\fI_{\tnM}}\ar[rr]^>>>>>>>>>{\fJ_{\tnT}} && 
Y_{\tnT\tnR}\ar[dd]^>>>>>>>>>>>>>>>>{\fI_{\tnR}}\ar[rr]&&0\\
&0\ar@{-->}'[r][rr]&& 
\R^{N_{\tnC\tnL}}\ar@{-->}'[d][dd]^{\fJ}\ar@{--}[r]^{\fI'} \ar@{.>}[dl]|{\Th_{\tnC\tnL}}&
\ar@{-->}[r]& 
\R^{N_{\tnC\tnM}}\ar@{-->}'[d][dd]^{\fI}\ar@{--}[r]^{\fJ'} \ar@{.>}[dl]|{\Th_{\tnC\tnM}}&
\ar@{-->}[r]&
\R^{N_{\tnC\tnR}}\ar@{-->}'[d][dd]^{\fI}\ar@{-->}[rr] \ar@{.>}[dl]|{\Th_{\tnC\tnR}}&&0\\
0\ar[rr]&&Y_{\tnC\tnL}\ar[dd]^>>>>>>>>>>>>>>>>{\fJ_{\tnL}}\ar[rr]^>>>>>>>>>{\fI_{\tnC}}&& 
Y_{\tnC\tnM}\ar[dd]^>>>>>>>>>>>>>>>>{\fJ_{\tnM}}\ar[rr]^>>>>>>>>>{\fJ_{\tnC}}&& 
Y_{\tnC\tnR}\ar[dd]^>>>>>>>>>>>>>>>>{\fJ_{\tnR}}\ar[rr]&&0\\
&0\ar@{-->}'[r][rr]&& 
\R^{N_{\tnB\tnL}}\ar@{-->}'[d][dd]\ar@{--}[r]^{\fI}\ar@{.>}[dl]|{\Th_{\tnB\tnL}}&
\ar@{-->}[r]& 
\R^{N_{\tnB\tnM}}\ar@{-->}'[d][dd]\ar@{--}[r]^{\fJ}\ar@{.>}[dl]|{\Th_{\tnB\tnM}}&
\ar@{-->}[r]& 
\R^{N_{\tnB\tnR}}\ar@{-->}'[d][dd]\ar@{-->}[rr]\ar@{.>}[dl]|{\Th_{\tnB\tnR}}&&0\\
0\ar[rr]&& Y_{\tnB\tnL}\ar[dd]\ar[rr]^>>>>>>>>>{\fI_{\tnB}}&& 
Y_{\tnB\tnM}\ar[dd]\ar[rr]^>>>>>>>>>{\fJ_{\tnB}}&& Y_{\tnB\tnR}\ar[dd]\ar[rr]&&0\\
&&&0&&0&&0&&\\ &&0&&0&&0&&&}
\end{gather*}
\caption{The panel of commutative diagrams, with exact rows and columns,
used in the proof of Corollary~\ref{ExSQ_crl} to regularize the square
grid~\eref{SQexact_e1}}
\label{CubeDiag_fig2}
\end{sidewaysfigure}

\noindent
By the commutativity and exactness properties of the diagram in Figure~\ref{CubeDiag_fig2},
the diagram~\eref{SQexact_e1} with $D_{\star*}$ replaced~by
$\ti{D}_{\star*}\!\equiv\!(D_{\star*})_{\Th_{\star*}}$,
$\fI_{\tnC}\!:X_{\tnC\tnL}\!\lra\!X_{\tnC\tnM}$ 
and $\fJ_{\tnC}\!:X_{\tnC\tnM}\!\lra\!X_{\tnC\tnR}$
replaced~by
$$\fI_{\tnC}\!\oplus\!\fI':X_{\tnC\tnL}\!\oplus\!\R^{N_{\tnC\tnL}}\lra
X_{\tnC\tnM}\!\oplus\!\R^{N_{\tnC\tnM}}
\qquad\hbox{and}\qquad
\fJ_{\tnC}\!\oplus\!\fJ':X_{\tnC\tnM}\!\oplus\!\R^{N_{\tnC\tnM}}\lra
X_{\tnC\tnR}\!\oplus\!\R^{N_{\tnC\tnR}},$$
respectively, and the remaining homomorphisms $\fI_{\star}$ and $\fJ_{\star}$
on $X_{*\circ}$ by $\fI_{\star}\!\oplus\!\fI$ and $\fJ_{\star}\!\oplus\!\fJ$
on $X_{*\circ}\!\oplus\!\R^{N_{*\circ}}$, respectively,
still commutes and its 3 rows and 3 columns are still exact.
Thus, by \eref{regulcond_e}, the Normalization~II property, and Lemma~\ref{ses_lmm2}, 
the diagram on the back face of the cube in Figure~\ref{CubeDiag_fig} commutes.\\

\noindent
Let $\cI_{\star*}\!=\!\cI_{\Th_{\star*};D_{\star*}}$ be the isomorphisms defined
by~\eref{cIThD_e1}.
By the commutativity of the 3 pairs of exact rows and 3 pairs of exact columns
in Figure~\ref{CubeDiag_fig2} and Lemma~\ref{StabTrip_lmm},
the diagrams on the top, right, bottom, and left faces in Figure~\ref{CubeDiag_fig}
commute for some $A_{\tnT},A_{\tnR},A_{\tnB},A_{\tnL}\!\in\!\R^*$ determined
by $N_{\tnT\tnL},N_{\tnT\tnR},N_{\tnB\tnL},N_{\tnB\tnR}$
and the indices of the Fredholm operators~$D_{\star*}$. 
By Lemma~\ref{ses_lmm2},
\BE{signsum_e} A_{\tnT}A_{\tnR}\!=\!A_{\tnB}A_{\tnL}\EE
if $\fc(D_{\star*})\!=\!\{0\}$ for $(\star,*)\!\in\!\{\tnT,\tnB\}\!\times\!\{\tnL,\tnR\}$.
Thus, \eref{signsum_e} always holds, which establishes Corollary~\ref{ExSQ_crl} in all cases.
\end{proof}

\begin{rmk}\label{ExSQ_rmk}
For the collection of exact triple isomorphisms~\eref{ETisom_e} given by~\eref{cUDDdfn_e},
$A_{\star}\!=\!(-1)^{\ep_{\star}}$ with
\begin{gather*}
\ep_{\tnT}=N_{\tnT\tnR}N_{\tnB\tnL}+(\ind\,D_{\tnC\tnL})N_{\tnT\tnR}+
(\ind\,D_{\tnT\tnR})N_{\tnB\tnL}+(\ind\,D_{\tnB\tnL})N_{\tnB\tnR},\quad
\ep_{\tnR}=(\ind\,D_{\tnT\tnM})N_{\tnB\tnM}, \\
\ep_{\tnB}=N_{\tnT\tnR}N_{\tnB\tnL}+(\ind\,D_{\tnC\tnL})N_{\tnC\tnR}, \qquad
\ep_{\tnL}=(\ind\,D_{\tnT\tnL})N_{\tnB\tnL}+(\ind\,D_{\tnT\tnR})N_{\tnB\tnR}.
\end{gather*}
In this case, it can be checked directly that 
$\ep_{\tnT}\!+\!\ep_{\tnR}\!+\!\ep_{\tnB}\!+\!\ep_{\tnL}\!\in\!2\Z$.
\end{rmk}

\begin{rmk}\label{cIdual_rmk}
For the collection of exact triple isomorphisms~\eref{ETisom_e} given by~\eref{cUDDdfn_e},
$A_N\!=\!1$ in Lemma~\ref{cIdual_lmm}.
Thus, $A_N\!=\!A_{-N,N}$ in the case of the collection of isomorphisms~\eref{ETisom_e}
of Theorem~\ref{classify_thm} on page~\pageref{classify_thm} corresponding to
the collection~$(A_{i,c})_{i,c}$.
\end{rmk}

\section{Topology}
\label{mainthmpf_sec}

\noindent
It remains to topologize each set~${\det}_{X,Y}$ as a line bundle over~$\cF(X,Y)$
with good properties.
By Proposition~\ref{overlap_prp}, the Normalization~I property on page~\pageref{NormalI_prop} 
and the isomorphisms~\eref{cIThD_e2} determine a topology on~${\det}_{X,Y}$
if the collection of exact triple isomorphisms~$\Psi_{\ft}$ as in~\eref{ETisom_e}
satisfies the Normalization~II,III and Compositions~I,II properties. 
By Corollary~\ref{ETcont_crl}, all isomorphisms~$\Psi_{\ft}$ are continuous 
with respect to these topologies if this collection also satisfies 
the Naturality~II property.
By Corollary~\ref{compdual_crl},
a collection of dualization isomorphisms as in~\eref{wtcD_e}
satisfying the Normalization~IV$^*$ and Dual Exact Triples properties
consists of continuous isomorphisms with respect to 
these topologies.

\subsection{Continuity of overlap and exact triple maps}
\label{cont_subs}

\noindent
For Banach vector spaces $X,Y,X',Y',X'',Y''$, let
$$\cT^*(X,Y;X',Y';X'',Y'')\subset \cT(X,Y;X',Y';X'',Y'')$$
denote the subspace of short exact sequences as in~\eref{cTdiag_e} 
with surjective Fredholm operators $D,D',D''$.

\begin{lmm}\label{surET_lmm}
Let $X,Y,X',Y',X'',Y''$ be Banach vector spaces.
A family of exact triple isomorphisms~$\Psi_{\ft}$ as in~\eref{ETisom_e} satisfying 
the Normalization~II property induces a continuous bundle map
$$\Psi\!: \pi_{\tnL}^*{\det}_{X',Y'}\otimes \pi_{\tnR}^*{\det}_{X'',Y''}
\lra \pi_{\tnC}^*{\det}_{X,Y}$$
over $\cT^*(X,Y;X',Y';X'',Y'')$ with respect to the topologies determined by
the Normalization~I property.
\end{lmm}

\begin{proof}
We abbreviate $\cT^*(X,Y;X',Y';X'',Y'')$ as $\cT^*$.
Let $\ft_0\!\in\!\cT^*$ be as in~\eref{cTdiag_e},
with all seven homomorphisms carrying subscript~$0$, and
$T\!:Y\!\lra\!X$, $T'\!:Y'\!\lra\!X'$, and $T''\!:Y''\!\lra\!X''$ 
be right inverses for $D_0$, $D_0'$, and~$D_0''$, respectively.
For each $\ft$ as in~\eref{cTdiag_e} and $\star\!=\!',''$ or blank,  let
$$\Phi_{D_0^{\star};\ft}\!:\,\ka(D^{\star})\lra \ka(D_0^{\star}), \qquad
\Phi_{D_0^{\star};\ft}(x)=x\!+\!T^{\star}\big(\{D^{\star}\!-\!D^{\star}_0\}(x)\big),$$
be as in~\eref{Phidfn_e}; this map is an isomorphism if $\ft$ is sufficiently close to~$\ft_0$.
We need to show that the~map
\BE{ETcontmap_e}
\Psi_{\ft_0;\ft}\!:
\la(\ka(D_0'))\otimes \la(\ka(D_0'')) \lra \la(\ka(D_0))\EE
described by
$$\Psi_{\ft}\big(\la(\Phi_{D_0';\ft}^{-1})x'\!\otimes\!1^*\otimes
\la(\Phi_{D_0'';\ft}^{-1})x''\!\otimes\!1^*\big)
=\big(\la(\Phi_{D_0;\ft}^{-1})\Psi_{\ft_0;\ft}(x'\!\otimes\!x'')\big)\otimes1^*$$
depends continuously on $\ft\!\in\!\cT^*$ near~$\ft_0$.\\

\noindent
Let $x_1',\ldots,x_k'$ be a basis for $\ka(D_0')$ and $x_1,\ldots,x_{\ell}\!\in\!\ka(D_0)$
be such that $\fJ_{X;0}(x_1),\ldots,\fJ_{X;0}(x_{\ell})$ is a basis for
$\ka(D_0'')$.
If $\ft$ as in~\eref{cTdiag_e} is sufficiently close to~$\ft_0$, then
$\Phi_{D_0';\ft}^{-1}(x_1'),\ldots,\Phi_{D_0';\ft}^{-1}(x_k')$ is a basis
for $\ka(D')$ and 
$$\fJ_X\big(\Phi_{D_0;\ft}^{-1}(x_1)\big),\ldots,\fJ_X\big(\Phi_{D_0;\ft}^{-1}(x_{\ell})\big)
\in X''$$
is a basis for $\ka(D'')$.
In particular,
\begin{equation*}\begin{split}
&\fJ_X\big(\Phi_{D_0;\ft}^{-1}(x_1)\big)\!\w\!\ldots\!\w\!\fJ_X\big(\Phi_{D_0;\ft}^{-1}(x_{\ell})\big)
=f(\ft)\,\Phi_{D_0'';\ft}^{-1}\big(\fJ_{X;0}(x_1)\big)\!\w\!\ldots\!\w\!
\Phi_{D_0'';\ft}^{-1}\big(\fJ_{X;0}(x_{\ell})\big),\\
&\fI_X\big(\Phi_{D_0';\ft}^{-1}(x_1')\big)\!\w\!\ldots
\!\w\!\fI_X\big(\Phi_{D_0';\ft}^{-1}(x_k')\big) \w
\Phi_{D_0;\ft}^{-1}(x_1)\!\w\!\ldots\!\w\!\Phi_{D_0;\ft}^{-1}(x_{\ell})\\
&\hspace{.4in}
=g(\ft)\,\Phi_{D_0;\ft}^{-1}\big(\fI_{X;0}(x_1')\big)\!\w\!\ldots\!\w\!
\Phi_{D_0;\ft}^{-1}\big(\fI_{X;0}(x_k')\big)\w
\Phi_{D_0;\ft}^{-1}(x_1)\!\w\!\ldots\!\w\!\Phi_{D_0;\ft}^{-1}(x_{\ell}) 
\end{split}\end{equation*}
for some $\R^+$-valued continuous functions $f$ and~$g$. 
By the Normalization~II property, the homomorphism~\eref{ETcontmap_e} is then given~by
\begin{equation*}\begin{split}
\Psi_{\ft_0;\ft}\big((x_1'\!\w\!\ldots\!\w\!x_k')\!\otimes\!
(\fJ_{X;0}(x_1)\!\w\!\ldots\!\w\!\fJ_{X;0}(x_{\ell}))\big)
=\frac{g(\ft)}{f(\ft)}\,
\fI_{X;0}(x_1')\!\w\!\ldots\!\w\!\fI_{X;0}(x_k') \w x_1\!\w\!\ldots\!\w\!x_{\ell}
\end{split}\end{equation*}
and thus depends continuously on~$\ft$.
\end{proof}

\begin{crl}\label{surET_crl}
Let $(\Psi_{\ft})_{\ft}$ be a collection of exact triple isomorphisms~as 
in~\eref{ETisom_e} satisfying the Normalization~II,III and Compositions~II properties. 
For any Banach vector spaces~$X$ and~$Y$ and homomorphism $\Th\!:\R^N\!\lra\!Y$,
the induced map
$$\cI_{\Th}\!: \io_{\Th}^*{\det}_{X\oplus\R^N,Y}\lra{\det}_{X,Y}$$
as in~\eref{cIThD_e1} is continuous over $\cF^*(X,Y)$ 
with respect to the topologies determined by the Normalization~I property.
\end{crl}

\begin{proof}
By Lemma~\ref{cIdual_lmm}, $\cI_{\Th;D}\!=\!(-1)^{(\ind\,D)N}A_N\hat\cI_{\Th;D}^{-1}$,
with~$\hat\cI_{\Th;D}$ given by~\eref{cIThD_e2}.
By Lemma~\ref{surET_lmm}, the family of isomorphisms~$\hat\cI_{\Th;D}$
induce a continuous bundle map
$$\hat\cI_{\Th}\!:  {\det}_{X,Y}\lra\io_{\Th}^*{\det}_{X\oplus\R^N,Y}$$
over $\cF^*(X,Y)$.
This implies the claim.
\end{proof}

\noindent
Let $X$ and $Y$ be as above.
The subsets
$$U_{X;\Th}\equiv\big\{D\!\in\!\cF(X,Y)\!:\,\fc(D_{\Th})=0\big\}$$
form an open cover of $\cF(X,Y)$ as $\Th$ ranges over all homomorphisms $\R^N\!\lra\!Y$
and $N$ ranges over all nonnegative integers.
We topologize ${\det}_{X,Y}|_{U_{X;\Th}}$ by requiring the bundle isomorphism
$$\io_{\Th}^*{\det}_{X\oplus \R^N,Y}\lra {\det}_{X,Y}, \qquad
\si\lra \cI_{\Th;D}(\si)~~\forall~\si\!\in\!\la(D_{\Th}),~D\!\in\!\cF(X,Y),$$
to be a homeomorphism over $U_{X;\Th}$ with respect to the topology on the domain 
induced by the topology on ${\det}_{X\oplus \R^N,Y}|_{\cF^*(X\oplus \R^N,Y)}$ described
at the beginning of this section. 
We next show that this topology is well-defined.

\begin{prp}[Continuity of transition maps]\label{overlap_prp}
Let $(\Psi_{\ft})_{\ft}$ be a collection of exact triple isomorphisms~as 
in~\eref{ETisom_e} satisfying the Normalization~II,III and Compositions~I,II properties. 
For any Banach vector spaces~$X$ and~$Y$ and homomorphisms \hbox{$\Th_i\!:\R^{N_i}\!\lra\!Y$}
with $i\!=\!1,2$,
the induced overlap~map
\begin{gather*}
\cI_{\Th_2;D}^{-1}\!\circ\!\cI_{\Th_1;D}\!:
\io_{\Th_1}^*{\det}_{X\oplus\R^{N_1},Y}\lra \io_{\Th_2}^*{\det}_{X\oplus \R^{N_2},Y}
\end{gather*}
is continuous over $U_{X;\Th_1}\!\cap\!U_{X;\Th_2}$
with respect to the topologies determined by the Normalization~I property.
\end{prp}

\begin{proof} With $N\!\equiv\!N_1\!+\!N_2$, let
\begin{equation*}\begin{split}
&0\lra \R^{N_1}\stackrel{\io_1}{\lra}  \R^N\!=\!\R^{N_1}\!\oplus\!\R^{N_2}
\stackrel{\pi_{\tnR;N_2}}{\lra}\R^{N_2}\lra0\qquad\hbox{and}\\
&0\lra \R^{N_2}\stackrel{\io_2}{\lra}  \R^N\!=\!\R^{N_1}\!\oplus\!\R^{N_2}
\stackrel{\pi_{\tnL;N_1}}{\lra}\R^{N_1}\lra0
\end{split}\end{equation*}
be the natural exact sequences of vector spaces and 
$$\io_{k;X}=\id_X\!\oplus\!\io_k\!:\,X\!\oplus\!\R^{N_k}\lra X\!\oplus\!\R^N$$
for $k\!=\!1,2$.
Define
$$\Th\!:\,\R^N\lra Y \qquad\hbox{by}\quad \Th(u_1,u_2)=\Th_1(u_1)+\Th_2(u_2).$$
Thus, the diagram 
$$\xymatrix{& X\!\oplus\!\R^{N_1}\ar[rd]|{\io_{1;X}}\ar@/^/[rrd]|{D_{\Th_1}}&&\\
X\ar[ru]|{\io_{X;N_1}}\ar[rd]|{\io_{X;N_2}} \ar[rr]^{\io_{X;N}}&& 
X\!\oplus\!\R^N\ar[r]^{D_{\Th}}& Y\\
& X\!\oplus\!\R^{N_2}\ar[ru]|{\io_{2;X}}\ar@/_/[rru]|{D_{\Th_2}}&&}$$
of Fredholm operators commutes.\\

\noindent
By the Commutativity~I property, the diagram 
\BE{overlapBigDiag_e}\begin{split}
\xymatrix{&\la(D_{\Th_1})\ar@{..>}[rrrd]|<<<<<<<<<<<<<<{A_1}&&& \\
&\la(\io_{X;N_1})\otimes\la(\io_{1;X})\otimes\la(D_{\Th})
\ar[rrr]^<<<<<<<<<<<<<<{\id\otimes\wt\cC_{\io_{1;X},D_{\Th}}}
\ar[d]|{\wt\cC_{\io_{X;N_1},\io_{1;X}}\otimes\id}&&&
 \la(\io_{X;N_1})\otimes\la(D_{\Th_1})\ar[d]|{\wt\cC_{\io_{X;N_1},D_{\Th_1}}}\\
\la(D_{\Th}) \ar@{..>}[r]^<<<<<<<<<A \ar@{..>}@/^2pc/[uur]|{\cI_{\Th_2;D_{\Th_1}}}
\ar@{..>}@/_2pc/[ddr]|{\cI_{\Th_1;D_{\Th_2}}\circ\wt\cI_{R;D}}&
\la(\io_{X;N})\otimes\la(D_{\Th})\ar[rrr]^{\wt\cC_{\io_{X;N},D_{\Th}}}&&& \la(D)\\
& \la(\io_{X;N_2})\otimes\la(\io_{2;X})\otimes\la(D_{\Th})
\ar[rrr]^<<<<<<<<<<<<<<{\id\otimes\wt\cC_{\io_{2;X},D_{\Th}}}
\ar[u]|{\wt\cC_{\io_{X;N_2},\io_{2;X}}\otimes\id}&&&
 \la(\io_{X;N_2})\otimes\la(D_{\Th_2})\ar[u]|{\wt\cC_{\io_{X;N_2},D_{\Th_2}}}\\
&\la(D_{\Th_2})\ar@{..>}[rrru]|<<<<<<<<<<<<<<{A_2}&&& }
\end{split}\EE
also commutes (excluding the dotted arrows).
We define the isomorphisms $A,A_1,A_2$ in this diagram~by
\BE{Aprop_e}A(\si)=1\otimes(\OmN^*\!\circ\!\la(\pi_2))\otimes\si, \qquad
A_k(\si_k)=1\otimes(\Om_{N_k}^*\!\circ\!\la(\pi_2))\otimes\si_k,\EE
where $\pi_2\!:\fc(\io_{X;N_k})\!\lra\!\R^{N_k}$ is the isomorphism induced by 
the projection $X\!\oplus\!\R^{N_k}\lra\R^{N_k}$.
By~\eref{cIThD_e1},
\BE{overlap_e2b}
\cI_{\Th;D}=\wt\cC_{\io_{X;N},D_{\Th}}\circ A\,,\quad
\cI_{\Th_k;D}=\wt\cC_{\io_{X;N_k},D_{\Th_k}}\circ A_k\,,\quad k=1,2.\EE
Let $R\!:X\!\oplus\!\R^N\!\lra\!X\!\oplus\!\R^N$ be the isomorphism given by
$$R(x,u_1,u_2)=(x,u_2,u_1) \qquad 
\forall\,(x,u_1,u_2)\in X\!\oplus\!\R^{N_1}\!\oplus\!\R^{N_2}$$
and
\begin{equation*}\begin{split}
\wt\cI_{R;D}\!\equiv\!\wt\cI_{R,\id_Y;D_{\Th}}\!: 
\la(D_{\Th})&\lra \la(D_{\Th}\!\circ\!R^{-1}) =\la\big(\cI_{R,\id_Y}(D_{\Th})\big),\\
\wt\cI_{R;N_1}\!\equiv\!
\wt\cI_{\id_{X\oplus\R^{N_2}},R^{-1};\io_{X\oplus\R^{N_2};N_1}}\!:
\la(\io_{X\oplus\R^{N_2};N_1})&\lra 
\la\big(R^{-1}\!\circ\!\io_{X\oplus\R^{N_2};N_1}\big)=\la(\io_{2;X})
\end{split}\end{equation*}
be the corresponding isomorphisms as in~\eref{wtphipsi_e}.\\

\noindent
Let $C_1,C_2\!\in\!\R^*$ be such that
\begin{equation*}\begin{split}
\wt\cC_{\io_{0;N_1},\io_1}\!\big(1\!\otimes\!\Om_{N_1}^*\otimes
1\!\otimes\!(\Om_{N_2}^*\!\circ\!\la(\pi_{\tnR;N_2}))\big)
&=C_11\otimes\OmN^*,\\
\wt\cC_{\io_{0;N_2},\io_2}\!\big(1\!\otimes\!\Om_{N_2}^*\otimes
1\!\otimes\!(\Om_{N_1}^*\!\circ\!\la(\pi_{\tnL;N_1}))\big)
&=C_21\otimes\OmN^*,
\end{split}\end{equation*}
where $\pi_{\tnR;N_2}\!:\fc(\io_1)\!\lra\!\R^{N_2}$ and
$\pi_{\tnL;N_1}\!:\fc(\io_2)\!\lra\!\R^{N_1}$ are
are the isomorphisms induced by the projections~$\pi_{\tnR;N_2}$ and~$\pi_{\tnL;N_1}$.
By the Compositions~II and Normalization~III properties applied to the diagram
$$\xymatrix{0\ar[r]& X\ar[d]^{\id}\ar[r]^{\id}& 
X\ar[d]^{\io_{X;N_1}}\ar[r]& 0\ar[d]^{\io_{0;N_1}}\ar[r]&0\\
0\ar[r]& X\ar[d]^{\id}\ar[r]^>>>>>{\io_{X;N_1}}& 
X\!\oplus\!\R^{N_1}\ar[d]^{\io_{1;X}}\ar[r]^{\pi_2}& \R^{N_1}\ar[d]^{\io_1}\ar[r]&0\\
0\ar[r]& X\ar[r]^>>>>>>{\io_{X;N}}& X\!\oplus\!\R^N\ar[r]^{\pi_2}& \R^N\ar[r]&0\,,}$$
the diagram
$$\xymatrix{\la(\io_{0;N_1})\!\otimes\!\la(\io_1)\ar[d]^{\wt\cC_{\io_{0;N_1},\io_1}} 
 \ar[rrrr]^{\ti\cI_{\pi_2,\pi_2;\io_{X;N_1}}^{~-1}\otimes\ti\cI_{\pi_2,\pi_2;\io_{1;X}}^{~-1}} 
&&&&\la(\io_{X;N_1})\!\otimes\!\la(\io_{1;X})
\ar[d]_{\wt\cC_{\io_{X;N_1},\io_{1:X}}}\\
\la(\io_{0;N}) \ar[rrrr]^{\ti\cI_{\pi_2,\pi_2;\io_{X;N}}^{~-1}} &&&& \la(\io_{X;N})}$$
commutes.
Thus,
\BE{overlap_e4}
\wt\cC_{\io_{X;N_1},\io_{1;X}}
\big(1\!\otimes\!(\Om_{N_1}^*\!\circ\!\la(\pi_2)) \otimes
1\!\otimes\!(\Om_{N_2}^*\!\circ\!\la(\pi_{\tnR;N_2;X}))\big)
=C_1 1\otimes(\OmN^*\!\circ\!\la(\pi_2))\,,\EE
where $\pi_{\tnR;N_2;X}\!:\fc(\io_{1;X})\!\lra\!\R^{N_2}$ is the isomorphism induced by
the projection $X\!\oplus\!\R^N\!\lra\!\R^{N_2}$ onto the last $N_2$ Euclidean coordinates. 
Similarly,
\BE{overlap_e4b}
\wt\cC_{\io_{X;N_2},\io_{2;X}}
\big(1\!\otimes\!(\Om_{N_2}^*\!\circ\!\la(\pi_2)) \otimes
1\!\otimes\!(\Om_{N_1}^*\!\circ\!\la(\pi_{\tnL;N_1;X}))\big)
=C_2\,1\otimes(\OmN^*\!\circ\!\la(\pi_2)),\EE
where $\pi_{\tnL;N_1;X}\!:\fc(\io_{2;X})\!\lra\!\R^{N_1}$ is the isomorphism induced by
the projection $X\!\oplus\!\R^N\!\lra\!\R^{N_1}$ onto the first $N_1$ Euclidean coordinates.\\

\noindent
Since $(D_{\Th_1})_{\Th_2}\!=\!D_{\Th}$ and $\io_{X\oplus\R^{N_1};N_2}\!=\!\io_{1;X}$, 
\begin{equation*}\begin{split}
&\big\{\id\!\otimes\!\wt\cC_{\io_{1;X;},D_{\Th}}\big\}^{-1}
\big(A_1(\cI_{\Th_2;D_{\Th_1}}(\si))\big)\\
&\qquad=\big\{\id\!\otimes\!\wt\cC_{\io_{1;X;},D_{\Th}}\big\}^{-1}
\big(1\!\otimes\!(\Om_{N_1}^*\!\circ\!\la(\pi_2))\otimes
\wt\cC_{\io_{1;X;},D_{\Th}}
(1\!\otimes\!(\Om_{N_2}^*\!\circ\!\la(\pi_{\tnR;N_2})) \otimes\si)\big)\\
&\qquad=1\!\otimes\!(\Om_{N_1}^*\!\circ\!\la(\pi_2)) \otimes
1\!\otimes\!(\Om_{N_2}^*\!\circ\!\la(\pi_{\tnR;N_2}))  \otimes\si.
\end{split}\end{equation*}
Combining this with \eref{overlap_e2b}, 
the commutativity of the upper rectangle in~\eref{overlapBigDiag_e},
and~\eref{overlap_e4}, we obtain
\BE{overlap_e1}
\cI_{\Th;D}^{-1}\circ\cI_{\Th_1;D}=C_1\,\cI_{\Th_2;D_{\Th_1}}^{-1}\,.\EE

\vspace{.15in}

\noindent
On the other hand, $(D_{\Th_2})_{\Th_1}\!=\!D_{\Th}\!\circ\!R^{-1}$ and
$\io_{X\oplus\R^{N_2};N_1}\!=\!R\!\circ\!\io_{2;X}$.
By the Composition~I property applied to  
$D_{\Th}\!\circ\!R^{-1}\!\circ\!\io_{X\oplus\R^{N_2};N_1}$
and the Normalization~III property, the~diagram
$$\xymatrix{\la(\io_{X\oplus\R^{N_2};N_1})\otimes\la(D_{\Th})
\ar[rrrr]^<<<<<<<<<<<<<<<<<<<<<<{\wt\cI_{R;N_1}\otimes\id}
\ar[d]|{~~\id\otimes\wt\cI_{R;D}}
&&&& \la(\io_{2;X})\otimes\la(D_{\Th})
\ar[d]|{\wt\cC_{\io_{2;X},D_{\Th}}}\\
\la(\io_{X\oplus\R^{N_2};N_1})\otimes\la(D_{\Th}\!\circ\!R^{-1})
\ar[rrrr]^<<<<<<<<<<<<<<<<<<<<<{\wt\cC_{\io_{X\oplus\R^{N_2};N_1},D_{\Th}\circ R^{-1}}}&&&&
\la(D_{\Th_2})}$$
commutes.
Since
$$\wt\cI_{R;N_1}\big(1\!\otimes\!(\Om_{N_1}^*\!\circ\!\la(\pi_{\tnR;N_1}))\big)
=1\otimes(\Om_{N_1}^*\!\circ\!\la(\pi_{\tnL;N_1}))\,,$$
the last commutative diagram gives
\begin{equation*}\begin{split}
&\big\{\id\!\otimes\!\wt\cC_{\io_{2;X;},D_{\Th}}\big\}^{-1}
\big(A_2\big(\cI_{\Th_1;D_{\Th_2}}(\wt\cI_{R;D}(\si))\big)\big)
=1\!\otimes\!(\Om_{N_2}^*\!\circ\!\la(\pi_2)) \otimes
1\!\otimes\!(\Om_{N_1}^*\!\circ\!\la(\pi_{\tnL;N_1}))  \otimes\si\,.
\end{split}\end{equation*}
Combining this with \eref{overlap_e2b}, the commutativity of the lower rectangle 
in~\eref{overlapBigDiag_e}, and~\eref{overlap_e4b}, 
we obtain
\BE{overlap_e2}
\cI_{\Th;D}^{-1}\circ\cI_{\Th_2;D}
=C_2\,\wt\cI_{R;D}^{-1}\circ\cI_{\Th_1;D_{\Th_2}}^{-1}\,.\EE
From~\eref{overlap_e1} and~\eref{overlap_e2}, we conclude that 
$$\cI_{\Th_2;D}^{-1}\circ\cI_{\Th_1;D} =
(C_1/C_2)\,\cI_{\Th_1;D_{\Th_2}}\circ\wt\cI_{R;D}\circ \cI_{\Th_2;D_{\Th_1}}^{-1}.$$
The outer maps on the right-hand side above are continuous over 
$U_{X;\Th_1}\!\cap\!U_{X;\Th_2}$ by Corollary~\ref{surET_crl},
while the middle map is continuous over $U_{X;\Th_1}\!\cap\!U_{X;\Th_2}$ by 
Lemma~\ref{surET_lmm}.
\end{proof}

\begin{crl}[Continuity of~\eref{ETisom_e}]\label{ETcont_crl}
Let $(\Psi_{\ft})_{\ft}$ be a collection of exact triple isomorphisms~as 
in~\eref{ETisom_e} satisfying the Naturality~II, Normalization~II,III, 
and Compositions~I,II properties.
For any Banach vector spaces $X,Y,X',Y',X'',Y''$, the bundle map
$$\Psi\!:  \pi_{\tnL}^*{\det}_{X',Y'}\otimes \pi_{\tnR}^*{\det}_{X'',Y''}
\lra \pi_{\tnC}^*{\det}_{X,Y}$$
over $\cT(X,Y;X',Y';X'',Y'')$ is continuous with respect to the topologies
provided by Proposition~\ref{overlap_prp}.
\end{crl}

\begin{proof} We abbreviate $\cT(X,Y;X',Y';X'',Y'')$ as $\cT$.
Let $\ft_0\!\in\!\cT$ be as in~\eref{cTdiag_e},
with all seven homomorphisms carrying subscript~$0$, and
$$\Th'\!:\R^{N'}\lra Y' \qquad\hbox{and}\qquad  \ti\Th''\!:\R^{N''}\lra Y$$ 
be homomorphisms such that $D_0'\!\in\!U_{X';\Th'}$ and
$D_0''\!\in\!U_{X'';\fJ_{Y;0}\circ\ti\Th''}$.
Let $N\!=\!N'\!+\!N''$, $\fI\!:\R^{N'}\!\lra\!\R^N$
be the inclusion as $\R^{N'}\!\times\!0^{N''}$, and
$\fJ\!:\R^N\lra\R^{N''}$ be the projection onto the last $N''$ coordinates,
and $A_{i,j}\!\in\!\R^*$ be as in~\eref{Asdfn_e}.
For each $\ft\!\in\!\cT$ as in~\eref{cTdiag_e}, define 
\begin{alignat*}{3}
\Th_{\ft}\!:\R^N&\lra X, &\qquad 
\Th_{\ft}(x',x'')&=\fI_Y\big(\Th'(x')\big)+\ti\Th''(x'')
&\quad &\forall\,(x',x'')\in\R^{N'}\!\oplus\!\R^{N''}\,,\\
\Th_{\ft}''\!:\R^{N''}&\lra X'', &\qquad 
\Th_{\ft}''(x'')&=\fJ_Y\big(\ti\Th''(x'')\big)
&\quad &\forall\,x''\in\!\R^{N''}\,.
\end{alignat*}
Thus, the diagram~$\fs(\ft)$ given~by
$$\xymatrix{ 
0\ar[r]& \R^{N'} \ar[r]^{\fI} \ar[d]^{\Th'}& \R^N \ar[r]^{\fJ}\ar[d]^{\Th_{\ft}} 
& \R^{N''} \ar[r]\ar[d]|{\Th_{\ft}''} & 0\\
0\ar[r]& Y' \ar[r]^{\fI_Y}& Y \ar[r]^{\fJ_Y}& Y'\ar[r]& 0}$$
commutes for every exact triple $\ft$ as in~\eref{cTdiag_e}, and we obtain
an embedding
$$\cT\lra 
\cT(X\!\oplus\!\R^N,Y;X'\!\oplus\!\R^{N'},Y';X''\!\oplus\!\R^{N''},Y''),
\qquad \ft\lra\ft_{\fs(\ft)}.$$
By Lemma~\ref{StabTrip_lmm}, the diagram
\begin{equation*}\begin{split}
\xymatrix{\la(D_{\Th'}')\otimes\la(D_{\Th_{\ft}''}'') 
\ar[rrrrr]^{\Psi_{\ft_{\fs(\ft)}}}
\ar[d]|{\cI_{\Th';D'}\otimes\cI_{\Th_{\ft}'';D''}} &&&&&
\la(D_{\Th_{\ft}})  \ar[d]^{\cI_{\Th_{\ft};D}} \\
\la(D')\otimes \la(D'') 
\ar[rrrrr]^>>>>>>>>>>>>>>>>>>>>>>>>>>{(-1)^{(\ind\,D')N''}A_{i,j}\Psi_{\ft}}&&&&& \la(D)}
\end{split}\end{equation*}
commutes.
By the definition of the topologies on the determinant line bundles, the vertical arrows 
in the above diagram induce continuous line-bundle isomorphisms over the open subset 
of~$\cT$ consisting of the exact triples~$\ft$ as in~\eref{cTdiag_e}
so that $D'\!\in\!U_{X';\Th'}$ and $D''\!\in\!U_{X'';\Th_{\ft}''}$.
By Lemma~\ref{surET_lmm}, the top arrow  
induces a continuous line-bundle isomorphism over the same open subset.
Thus, the bottom arrow in this diagram induces a continuous 
line-bundle isomorphism as~well.
\end{proof}

\begin{rmk}\label{cont_rmk}
For the collection of exact triple isomorphisms~\eref{ETisom_e} given by~\eref{cUDDdfn_e},
$C_1\!=\!(-1)^{N_1N_2}$ and $C_2\!=\!1$ in the proof of Proposition~\ref{overlap_prp},
 as can be seen from~\eref{cUDDdfn_e2}.
\end{rmk}

\subsection{Continuity of dualization isomorphisms}
\label{DualProp_subs}

\noindent
We begin by verifying the Normalization~I$'$ property on page~\pageref{NormalIpr_prop};
see Lemma~\ref{NormalIdual_lmm}.
This allows us to confirm the continuity of~\eref{cDdfn_e0} over~$\cF^*(X,Y)$;
see Lemma~\ref{surDual_lmm}.
The continuity of~\eref{wtcD_e} over $\cF(X,Y)$
then follows from the Dual Exact Triples property
on page~\pageref{compdual_prop}; see the proof of Corollary~\ref{compdual_crl}.
For each $D\!\in\!\cF(X,Y)$, let
$$q_D\!:Y\lra\fc(D), \qquad q_D(y)= y+\Im\,D,$$
be the projection map as before.

\begin{lmm}[Normalization~I$'$]\label{NormalIdual_lmm}
Let $(\Psi_{\ft})_{\ft}$ be a collection of exact triple isomorphisms~as 
in~\eref{ETisom_e} satisfying the Naturality~II, Normalization~II,III, 
and Compositions~I,II properties.
For any Banach vector spaces~$X$ and~$Y$, $D_0\!\in\!\cF'(X,Y)$, 
and right inverse $S\!:\fc(D_0)\!\lra\!Y$ for~$q_{D_0}$,
there exists a neighborhood~$U_{D_0,S}$ of~$D_0$ in~$\cF'(X,Y)$
so that the bundle isomorphism~\eref{cIdual_e2} is well-defined and continuous
with respect to the topology provided by Proposition~\ref{overlap_prp}.
\end{lmm}

\begin{proof}
By the Open Mapping Theorem for Banach vector spaces,
$$U_{D_0,S}\equiv\big\{D\!\in\!\cF'(X,Y)\!:\,Y=\Im\,D\oplus\Im\,S\big\}$$
is an open neighborhood of $D_0$ in $\cF'(X,Y)$.
Let
$$\pi_X,\pi_S\!: Y=\Im\,D_0\oplus\Im\,S\lra \Im\,D_0,\,\Im\,S$$
be the projection maps and $D_0^{-1}\!:\Im\,D_0\!\lra\!X$ be the inverse of the isomorphism
$$D_0\!:X\lra\Im\,D_0\,,\qquad x\lra D_0x.$$
For each $D\!\in\!U_{D_0,S}$, the~map
$$\psi_{D_0;D}\!: Y\lra Y, \qquad 
\psi_{D_0;D}(y)=D\big(D_0^{-1}(\pi_X(y))\!\big)+\pi_S(y),$$
is an isomorphism  so that 
$$D\!=\!\psi_{D_0;D}\!\circ\!D_0\!\circ\!\id_X^{-1} 
\qquad\hbox{and}\qquad \psi_{D_0;D}(y)\!-\!S(q_{D_0}(y))\in\Im\,D~~\forall\,y\!\in\!Y.$$
By the last property, 
$$\wt\cI_{D_0,S;D}\!=\!\wt\cI_{\id_X,\psi_{D_0;D}}\!: \la(D_0)\lra \la(D).$$
Since $\psi_{D_0;D}$ depends continuously on~$D$,
the claim follows from the continuity of~\eref{ETisom_e} provided by 
Corollary~\ref{ETcont_crl}, along with the Normalization~III property.
\end{proof}

\begin{lmm}\label{surDual_lmm}
Let $(\Psi_{\ft})_{\ft}$ be a collection of exact triple isomorphisms~as 
in~\eref{ETisom_e} satisfying the Naturality~II, Normalization~II,III, 
and Compositions~I,II properties.
For any Banach vector spaces~$X$ and~$Y$,
the family of maps~$\wt\cD_D$ given by~\eref{cDdfn_e0} induces a continuous bundle map
$$\wt\cD\!: {\det}_{X,Y} \lra \cD^*{\det}_{Y^*,X^*} $$
over $\cF^*(X,Y)$ with respect to the topologies provided by Proposition~\ref{overlap_prp}.
\end{lmm}

\begin{proof} 
Let $D_0\!\in\!\cF^*(X,Y)$, $T\!:Y\!\lra\!X$ be a right inverse for~$D_0$, and
$$\pi_T\!: X=\ka(D_0)\oplus\Im\,T\lra\ka(D_0),\qquad x\lra x-TD_0x~~\forall\,x\!\in\!X,$$
be the projection map. 
Thus, the homomorphism
$$S\!:\fc(D_0^*)\lra X^*, \qquad \al+\Im\,D_0^*\lra \al|_{\ka(D_0)}\circ\pi_T\,,$$
is a right inverse for~$q_{D_0^*}$.
By the Normalization~I property on page~\pageref{NormalI_prop} and Lemma~\ref{NormalIdual_lmm},
it is sufficient to show that the~map
$$\wt\cI_{D_0^*,S;D^*}^{-1}\!\circ\!\wt\cD_D\!\circ\!\wt\cI_{D_0,T;D}\!:
\la(D_0)\lra \la(D)\lra\la(D^*)\lra \la(D_0^*)$$
depends continuously on $D\!\in\!U_{D_0,S}$.
By \eref{Phidfn_e}, \eref{cDdfn_e0}, and \eref{cIdual_e}, this map is given~by
$$x\!\otimes\!1^*\lra 1 \otimes\cP\big(\la(\cD_{D_0})x\big),$$
which establishes the claim.
\end{proof}

\begin{crl}[Continuity of~\eref{wtcD_e}]\label{compdual_crl}
Let $(\Psi_{\ft})_{\ft}$ be a collection of exact triple isomorphisms~as 
in~\eref{ETisom_e} satisfying the Naturality~II, Normalization~II,III, 
and Compositions~I,II properties.
If a collection of bundle maps
$$\wt\cD\!: {\det}_{X,Y} \lra \cD^*{\det}_{Y^*,X^*} $$
over $\cF(X,Y)$ satisfies the Normalization~IV$^*$ and Dual Exact Triples
properties, 
then the maps in this collection are continuous with respect to the topologies 
provided by Proposition~\ref{overlap_prp}.
\end{crl}

\begin{proof} Let $X,Y$ be Banach vector spaces.
Let $D\!\in\!\cF(X,Y)$ and $\Th\!:\R^N\!\lra\!Y$ be a homomorphism so that 
$D\!\in\!U_{X;\Th}$.
By the Dual Exact Triples property for the commutative diagram
$$\xymatrix{0\ar[r]& X\ar[r]^>>>>>{\io_{X;N}}\ar[d]^D& X\oplus\R^N\ar[r]\ar[d]^{D_{\Th}}& 
\R^N\ar[r]\ar[d]^j&0\\
0\ar[r]& Y\ar[r]^{\id_Y}& Y\ar[r]& 0\ar[r]&0\,, }$$
the diagram 
$$\xymatrix{\la(D)\otimes\la(j)
 \ar[d]|{~~\wt\cD_j\otimes\wt\cD_D\circ R}\ar[rr]^>>>>>>>>>>>>>{\Psi_{\ft}}&& 
\la\big(D_{\Th}\big)\ar[d]^{\wt\cD_{D_{\Th}}}\\
\la(j^*)\otimes\la(D^*)\ar[rr]^>>>>>>>>>{\Psi_{\ft^*}}&& \la\big(D_{\Th}^*\big)}$$
commutes.
By Corollary~\ref{ETcont_crl}, the horizontal arrows in this diagram induce 
bundle maps that are continuous
with respect to the topologies provided by Proposition~\ref{overlap_prp}.
The right vertical arrow induces a continuous bundle map over~$U_{X;\Th}$ 
by Lemma~\ref{surDual_lmm}.
The isomorphisms~$R$ and~$\wt\cD_j$ on the left-hand side of 
this diagram do not depend on~$D$.
Thus, the isomorphisms~$\wt\cD_D$ also induce continuous bundle maps
over~$U_{X;\Th}$.
\end{proof}

\subsection{Orientations along paths}
\label{CrossNums_subs}

\noindent
Let $X,Y$ be Banach vector spaces.
We denote by 
$$\cF^{\star}(X,Y)\subset\cF^*(X,Y)\subset\cF(X,Y)$$
the subspace of isomorphisms between $X$ and~$Y$. 
If $D\!\in\!\cF^{\star}(X,Y)$ is an isomorphism, 
the element $1\!\otimes\!1^*$ of~$\la(D)$ determines an orientation on this line,
which we will call the \sf{canonical orientation} of~$\la(D)$.
Below we determine whether the extension of this orientation over a generic path
in~$\cF(X,Y)$ ending in~$\cF^{\star}(X,Y)$ restricts to the canonical orientation
over the endpoint as well; see Proposition~\ref{CrossNums_prp}.\\

\noindent
Let $D_t\!\in\!\cF(X,Y)$ with $t\!\in\!(-\de,\de)$ be a continuous path 
so that $D_t\!\in\!\cF^{\star}(X,Y)$ for $t\!\neq\!0$ and $\fd(\fc(D_0))\!=\!1$.
By the continuity of the index, this implies that $\fd(\ka(D_0))\!=\!1$ as well.
Choose 
$$x_0\in\ka(D)\!-\!\{0\}, \qquad  y_0\in Y\!-\!\Im\,D_0,$$ 
and a closed linear subspace $\dot{X}\!\subset\!X$ such that the~operator
$$\dot{D}_0\!:\dot{X}\lra\Im\,D_0, \qquad \dot{D}_0(x)=D_0x,$$
is an isomorphism.
Shrinking~$\de$ if necessary, we can assume~that 
$$Y=\Im\,D_t\!\oplus\!\R y_0 \qquad\forall\,t\!\in\!(-\de,\de).$$ 
Let $B\!:(-\de,\de)\!\lra\!\dot{X}$ and $f_t\!:(-\de,\de)\!\lra\!\R$ be continuous maps 
so~that
$$D_tx_0=D_t(B(t)\!)\!+\!f(t)y_0.$$
By our assumptions, $B(0)\!=\!0$ and $f^{-1}(0)\!=\!\{0\}$.
We will call $0\!\in\!(-\de,\de)$ a \sf{transverse degeneration} of
the path $(D_t)_{t\in(-\de,\de)}$ if $f$ changes sign at $t\!=\!0$.
This notion is independent of the choices of~$x_0,y_0,\dot{X}$.\\

\noindent
Continuing with the setup above, define
$$\Th\!:\R\lra Y, \qquad \Th(s)=sy_0.$$
Thus, $\ka((D_t)_{\Th})$ is generated by the element $(x_0\!-\!B(t),-f(t)\!)$
of $X\!\oplus\!\R$.
Since the operators~$(D_t)_{\Th}$ are surjective, the~map 
$$\ti\cI\!:(-\de,\de)\!\times\!\R\lra
\bigsqcup_{t\in(-\de,\de)}\hspace{-.15in}\la\big((D_t)_{\Th}), \qquad
\ti\cI(t,s)=\big(x_0\!-\!B(t),-f(t)\!\big)s\in\la\big((D_t)_{\Th}),$$
is a continuous line bundle isomorphism over $(-\de,\de)$ by the Normalization~I property
on page~\pageref{NormalI_prop}.
By the Normalization~II property on page~\pageref{NormalI_prop}, 
the isomorphisms~\eref{cIThD_e2} are given~by
\begin{gather*}
\hat\cI_{\Th;D_t}\!: \la(D_t)\lra \la\big((D_t)_{\Th}),\\
\hat\cI_{\Th;D_t}\big(f(t)1\!\otimes\!1^*\big)=(B(t)\!-\!x_0,f(t)\!)\!\otimes\!1^*=\ti\cI(t,-1)
\quad\hbox{if}~t\neq0.
\end{gather*}
Since the isomorphisms $\hat\cI_{\Th;D_t}$ extend over $t\!=\!0$ by 
the Exact Triples property on page~\pageref{ETisom_e},
it follows that the canonical orientations of~$\la(D_t)$ with $t\!\neq\!0$
do not extend across $t\!=\!0$ if $0\!\in\!(-\de,\de)$ is a transverse degeneration of
the path $(D_t)_{t\in(-\de,\de)}$.
This establishes the following.

\begin{prp}[Wall Crossing]\label{CrossNums_prp}
Suppose $X,Y$ are Banach vector spaces and $(D_t)_{t\in[0,1]}$ is a continuous path
in~$\cF(X,Y)$ so that $D_t$ is an isomorphism except for finitely many values of~$t$ in~$(0,1)$.
If all degenerations of the path $(D_t)_{t\in[0,1]}$ are transverse, then
the canonical orientations of~$\la(D_0)$ and~$\la(D_1)$ extend continuously to 
orientations of~$\la(D_t)$ over~$[0,1]$ if and only if
the number of the degenerations is even.
\end{prp}

\vspace{1in}

\noindent
{\it Department of Mathematics, SUNY Stony Brook, NY 11794-3651\\
azinger@math.stonybrook.edu}

\clearpage

\end{document}